\def\clap#1{\hbox to 0pt{\hss#1\hss}}
\def\mathclap{\mathpalette\mathclapinternal}
\def\mathclapinternal#1#2{%
\clap{$\mathsurround=0pt#1{#2}$}}
\newcommand{\rhup}{\rightharpoonup}
\newcommand{\ket}[1]{|{#1}\rangle}
\def\BN{{\mathbb N}}
\def\BZ{{\mathbb Z}}
\def\BC{{\mathbb C}}
\def\B{{\mathcal B}}
\def\D{{\mathcal D}}
\def\C{{\mathcal C}}
\def\U{{\mathcal U}}
\def\Z{{\mathcal Z}}
\def\k{{\mathbbm k}}
\def\ot{\otimes}
\newcommand\inv{^{-1}}
\def\iff{\Leftrightarrow}
\newcommand{\cyc}[1]{\langle #1 \rangle}
\newcommand{\ip}[2]{\langle#1,#2\rangle}
\newcommand{\du}[1]{\mathbbm{k}^{#1}}
\newcommand{\duc}[1]{\mathbbm{k}^{#1\operatorname{co}}}
\newcommand\comm\curlyvee
\newcommand\cocomm\curlywedge
\newcommand{\com}[1]{_{(#1)}}
\newcommand{\BCh}[1]{\Hom(#1,\widehat{#1})}
\DeclareMathOperator{\VecG}{Vec}
\DeclareMathOperator{\image}{Im}  
\DeclareMathOperator{\Img}{\image}
\DeclareMathOperator{\Real}{Re}   
\DeclareMathOperator{\Hom}{Hom}   
\DeclareMathOperator{\Res}{Res}   
\DeclareMathOperator{\id}{id}     
\DeclareMathOperator{\sgn}{sgn}   
\DeclareMathOperator{\class}{class}
\DeclareMathOperator{\Aut}{Aut}   
\DeclareMathOperator{\Inn}{Inn}   
\DeclareMathOperator{\End}{End}   
\DeclareMathOperator{\Irr}{Irr}   
\DeclareMathOperator{\Rep}{Rep}		
\DeclareMathOperator{\co}{co}     
\DeclareMathOperator{\op}{op}			
\DeclareMathOperator{\cl}{cl}     
\theoremstyle{plain}
\newtheorem{thm}{Theorem}[section]
\newtheorem*{theorem*}{Theorem}
\newtheorem{cor}[thm]{Corollary}
\newtheorem{prop}[thm]{Proposition}
\newtheorem{lem}[thm]{Lemma}
\theoremstyle{definition}
\newtheorem{df}[thm]{Definition}
\newtheorem{example}[thm]{Example}
\theoremstyle{remark}
\newtheorem{rem}[thm]{Remark}
\crefname{lem}{Lemma}{Lemmas}
\crefname{thm}{Theorem}{Theorems}
\crefname{cor}{Corollary}{Corollaries}
\crefname{prop}{Proposition}{Propositions}
\crefname{example}{example}{examples}
\crefname{df}{Definition}{Definitions}
\crefname{equation}{equation}{equations}
\numberwithin{equation}{section}
\DeclareDocumentCommand{\morph}{ O{u} O{r} O{p} O{v} }{\begin{pmatrix} #1 & #2 \\ #3 & #4 \end{pmatrix}}
\DeclareDocumentCommand{\cmorph}{ O{u} O{r} O{p} O{v} }{(#1,#2,#3,#4)}
\DeclareDocumentCommand{\fqts}{ O{r} O{p} }{(1,#1,#2,0)}
\newcommand{\DDG}{\D(G)^{*\co}}
\DeclareDocumentCommand{\twocycle}{ O{u} O{r} O{p} O{v} O{G}}{\sum_{g,h,s,t\in #5} #2^*(h)e_s\# gt \ot e_h #1(e_g)\# #4(s)#3(e_t)}
\newlist{lemenum}{enumerate}{1}
\setlist[lemenum]{label=\roman*), ref=\textup{\thethm~(\roman*)}}
\title[Braid gauging applications]{Braid gaugings and categorical invariants}
\author{Marc Keilberg}
\email{keilberg@usc.edu}
\begin{document}
\begin{abstract}
We study the categorical notion of braid gauging and obtain its classical Hopf algebraic description. We demonstrate how braid gauging can provide new insights on certain categorical invariants, such as the fusion rules and the higher Frobenius-Schur indicators. The running example for the paper is the category $\Rep(\D(G))\cong \Z(\VecG_G)$, whose braid gaugings are studied in-depth.
\end{abstract}
\keywords{quasitriangular structures, group doubles, braidings, higher Frobenius-Schur indicators, Verlinde formula, modular categories}
\thanks{The author thanks Peter Schauenburg for many helpful discussions.}
\maketitle

\section{Introduction}\label{sec:intro}
A complete, group-theoretical description of the quasitriangular structures for $\D(G)$, the Drinfeld double of a finite group $G$, was obtained by the author in \citep{K3:QTS}. This is equivalently a description of all possible braidings with which the tensor category $\Rep(\D(G))$ of finite dimensional representations of $\D(G)$ can be endowed. \citet{Nik:braidings} has more recently obtained an alternative description from a more general categorical perspective.  While many of the classical applications of the classification from \citep{K3:QTS} were discussed therein---such as a description of the ribbon elements and a discussion of when there were isomorphisms of ribbon Hopf algebras---most of the applications to the categorical side, such as categorical invariants, were left open.  This paper arose out of the explorations of such applications.

Part of \citep{K3:QTS} was spent investigating and classifying what were called "central weak $R$-matrices" of $\D(G)$.  While these seemed to carry a lot of structure, and seemed closely related in form to the quasitriangular structures, it was suggested to the author that these had no categorical interpretation, and so the similarity was likely a coincidence of limited use outside of the very specific category under consideration. We show in \cref{sec:gauge} that these Hopf algebraic objects do, in fact, have a categorical interpretation as the braid gaugings of the category, as defined in \citep{Nik:braidings}.  This permits an easy translation of the results of \citep{K3:QTS} into a much more general categorical setting, and explains all of the apparent structure and relations to the quasitriangular structures.  We leverage this in \cref{sec:invariance} to establish the following invariance property:
\begin{theorem*}
  Let $G$ be a purely non-abelian group.  Then the pivotal category $\Rep(\D(G))$ has a unique structure as a braided tensor category, up to braided tensor equivalence.
\end{theorem*}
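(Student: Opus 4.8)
The plan rests on two ingredients. The first is the description of the braidings of the tensor category $\Rep(\D(G))$ furnished by \cref{sec:gauge}: every braiding is obtained from the canonical braiding $c$ of $\Z(\VecG_G)$ by a braid gauging, and the braid gaugings of $\Rep(\D(G))$ are exactly the central weak $R$-matrices $\CWR$ of $\D(G)$ classified in \citep{K3:QTS}; concretely, the braidings of $\Rep(\D(G))$ are the $c_\gamma$ for $\gamma\in\CWR$, where on the Hopf side $c_\gamma$ corresponds to the $R$-matrix $R\gamma$ and $R$ is the standard $R$-matrix of $\D(G)$. The second ingredient is the elementary observation that a braided tensor equivalence between two braidings on a fixed tensor category is the same datum as a monoidal auto-equivalence of that category intertwining them. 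So it suffices to show: when $G$ is purely non-abelian, for every $\gamma\in\CWR$ there is a monoidal auto-equivalence of $\Rep(\D(G))$ carrying $c_\gamma$ back to $c$.

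I would build such an equivalence from a Hopf algebra automorphism of $\D(G)$. The pertinent automorphisms are the bicharacter twists $\psi_\beta$, indexed by bicharacters $\beta\colon G\times G\to\k^{\times}$: these fix $\k^G\subseteq\D(G)$ pointwise and act on the group-algebra part by $g\mapsto\sum_{h\in G}\beta(g,h)\,(e_h\#g)$, and conjugation by $\psi_\beta$ preserves the algebra and coalgebra structure of $\D(G)$ while rescaling the standard $R$-matrix by $\beta$ (multiplying its $(g,h)$-entry by $\beta(g,h)$). The key claim, and the point at which the hypothesis is used, is that when $G$ is purely non-abelian the $R$-matrices so obtained already exhaust $\CWR$: for every $\gamma$ there is a bicharacter $\beta$ with $(\psi_\beta\otimes\psi_\beta)(R)=R\gamma$. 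Granting this, $\psi_\beta$ is an isomorphism of quasitriangular Hopf algebras $(\D(G),R)\to(\D(G),R\gamma)$, hence induces a braided tensor equivalence $(\Rep(\D(G)),c)\to(\Rep(\D(G)),c_\gamma)$; being induced by a Hopf automorphism it also preserves the canonical pivotal structure. Running $\gamma$ over $\CWR$ then gives the theorem.

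The heart of the matter, and the step I expect to be the main obstacle, is the purely group-theoretic claim just isolated: that every central weak $R$-matrix of $\D(G)$ is a bicharacter twist of $R$ once $G$ is purely non-abelian. Here I would unwind the classification in \citep{K3:QTS}, according to which a central weak $R$-matrix is governed by an abelian subquotient of $G$ together with a pairing refining the evaluation pairing, and then show that the absence of a nontrivial abelian direct factor forces every such pairing to be \emph{hyperbolic}, hence realizable by a bicharacter; a Remak-decomposition argument for purely non-abelian groups, together with the attendant bilinear-form bookkeeping, should accomplish this, and I expect it to be the technical bulk. The hypothesis cannot be dropped, and seeing why pinpoints the obstacle: if $G\cong A\times H$ with $A\neq 1$ abelian, then $\Rep(\D(A))\cong\VecG_{A\times\widehat A}$ is a tensor factor of $\Rep(\D(G))$ that already carries braidings given by non-hyperbolic quadratic forms on $A\times\widehat A$ --- for instance the double-semion braiding when $A\cong\BZ/2$ --- and these yield braided structures on $\Rep(\D(G))$ that are not braided tensor equivalent.
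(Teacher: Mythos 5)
Your proposal breaks at its very first step. It is not true that every braiding of $\Rep(\D(G))$ is a braid gauging of the canonical one: by \cref{prop:qts-gauge} the braidings fall into $2^{n}$ orbits under the gauging action, with representatives the standard $E$-chiral structures $R_E$, and distinct chiralities are never related by a gauging. The cleanest counterexample is $G$ simple non-abelian (certainly purely non-abelian): then $\U(\C)=G/G'\times\widehat{Z(G)}$ is trivial, so there are \emph{no} non-trivial braid gaugings, yet $R_0\neq R_1$ are distinct braidings. Your scheme can never connect them; the paper handles this with a separate argument (the last theorem of \cref{sec:invariance}), using $\Z(\VecG_G)\simeq\Z(\Rep(G))$, the symmetry of $\Rep(G)$, and $\Z(\C^{\op})\simeq\Z(\C)^{\mathrm{rev}}$ to identify $\Rep(\D(G),R_E)$ with $\Rep(\D(G),R_0)$.

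Within a fixed chirality class your mechanism is also too weak. The bicharacter automorphism $\psi_\beta$ fixes each $e_g\#1$ and sends $\varepsilon\#g\mapsto\sum_h\beta(g,h)\,e_h\#g$, so $(\psi_\beta\ot\psi_\beta)(R_0)=\sum_{g,h}\beta(g,h)\,e_h\#g\ot e_g\#1$, which in the notation of \cref{thm:qts} is exactly $(1,r,0,0)$ with $r(h,g)=\beta(g,h)$. Since the correspondence in \cref{thm:qts,thm:cwr} is a bijection onto quadruples, these twists realize only the $r$-part of the gauging group $\BCh{G/G'\times\widehat{Z(G)}}$ and can never produce a gauging with non-trivial $p\in\Hom(\widehat{Z(G)},Z(G))$ or non-trivial $\Hom(G,Z(G))$-components --- and such gaugings exist for every purely non-abelian $G$ with $Z(G)\neq 1$ (e.g.\ $G=Q_8$, where $(1,0,p,0)=(0,0,p,0)R_0$ with $p\neq 0$). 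Your hope that pure non-abelianness makes the relevant pairing ``hyperbolic, hence realizable by a bicharacter'' conflates the gauging group with $\BCh{G/G'}$; no such reduction holds, and this is precisely why the paper's \cref{thm:chiral-convert} does not proceed by conjugating $R$-matrices under Hopf automorphisms at all. Instead it builds a pair $(\Psi,J)$ in which $\Psi$ comes from a general automorphism $\psi=\begin{pmatrix}\alpha&\beta\\ \gamma&\delta\end{pmatrix}$ mixing $\du{G}$ and $\k G$, and $J$ is a \emph{non-trivial} tensor structure (itself a braid gauging); the purely non-abelian hypothesis enters not through any bilinear-form hyperbolicity but through \cref{lem:finite-comp,lem:recursion} (nilpotency of $\Hom(G,Z(G))$ and unique solvability of the resulting recursive equations), which is what allows the components of $\alpha,\beta,\gamma$ and of $J$ to be solved for. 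As written, your argument proves the theorem only for groups with $G=G'$ and $Z(G)=1$ and trivially so, and the converse discussion at the end, while reasonable, is not needed for the stated one-directional claim.
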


Of additional interest is the applications of braid gaugings to the study of invariants of modular fusion categories. Modular fusion admit two invertible matrices $S$ and $T$, called the modular data, which act on the complexification of the Grothendieck ring.  These yield a finite dimensional projective representation of $\text{SL}_2(\BZ)$. Such categories are ubiquitous in mathematical physics, details of which can be found in many of the references; \citep{BakKir:book} is particularly informative in this regard.  These invariants frequently either use braidings directly in their definitions, or admit formulas in terms of the modular data. The two we will single out are the higher Frobenius-Schur indicators \citep{KSZ2,NS07a} and the fusion coefficients, as expressed by the Verlinde formula (cf.~\citep{BakKir:book}). These are particularly noteworthy because they can be defined independently of the modular data, and so are invariant under a change of braiding.  A description of the braidings yielding a modular category then permits combining these formulas across many such choices, allowing many new identities and relations to be discovered. In this fashion we obtain new information about a category by studying its braidings.

The paper is otherwise structured as follows.  We review notation and background material in \cref{sec:prelims}.  Then in \cref{sec:gauge} we recall the notion of braid gauging from \citep{Nik:braidings}, and show how these are precisely a categorification of a certain collection of weak $R$-matrices \citep{Jiao:QTSwSP,Rad:QSHA}.  As a result we show that the "central weak $R$-matrices of $\D(G)$", as defined and classified in \citep{K3:QTS}, are precisely the braid gaugings of $\Rep(\D(G))$. This answers several questions from \citep{K3:QTS} concerning the structure of these objects, in particular their relationship to the quasitriangular structures.  We discuss the basic impact that braid gaugings have on certain categorical invariants in \cref{sec:links}. We then move to an in-depth study of the braidings and braid gaugings of $\Rep(\D(G))$ in the next two sections. \Cref{sec:chirality} is primarily dedicated to introducing additional terminology to facilitate this goal.  \Cref{sec:invariance} then establishes the aforementioned theorem that $\Rep(\D(G))$ has a unique structure as a braided tensor category whenever $G$ is purely non-abelian. The paper concludes by investigating new identities for the higher Frobenius-Schur indicators (\cref{sec:indicators}) and the fusion coefficients and $S$-matrix entries (\cref{sec:verlinde}) that are obtained from braid gaugings on a modular category, with a number of examples for the special case of $\Rep(\D(G))$.

\section{Preliminaries and Notation}\label{sec:prelims}
We begin by reviewing all of the background material and notation we will need.

We work over the field $\k=\BC$ of complex numbers, and define $U(1)$ to be the multiplicative group of norm 1 elements of $\BC$.  For the general theory of Hopf algebras, we refer the reader to \citep{Mon:HAAR}. All morphisms will be morphisms of Hopf algebras or groups, as appropriate, unless otherwise noted.  All unadorned tensor products are taken over $\k$.  We let $\tau$ denote the coordinate exchange map on pairs: $(h,k)\mapsto (k,h)$. On tensor products this would mean for $h\ot k\in H\ot K$ that $\tau(h\ot k)=k\ot h\in K\ot H$.

Many of the sets of morphisms we work with will in fact be abelian groups under convolution products.  So for the same reasons given for \citep[Definition 3.5]{K3:QTS}, we will adopt an additive notation for convolution products whenever convenient.  The convolution product of morphisms $u,v$ will be generally denoted by $u+v$, or by $u*v$ when the additive notation is otherwise inconvenient.  In the additive notation, $u-v$ denotes the convolution product $u+Sv$ of $u$ and $Sv$, where $S$ denotes the antipode. More generally, a minus sign on a morphism stands for the antipode. We will denote the $n$-fold composition of a function $f$ with itself, when defined, by $f^n$. Identity morphisms of groups or Hopf algebras will be denoted by 1, and trivial morphisms of groups or Hopf algebras will be denoted by 0.  The exceptions to this will be that the trivial character will be denoted $\varepsilon$, as will the counit of a Hopf algebra.

\subsection{Groups}
\begin{df}
  Let $G$ be a finite group.
  \begin{enumerate}
    \item $G$ is indecomposable if it has no non-trivial proper direct factors.
    \item $G$ is said to be purely non-abelian if it has no non-trivial abelian direct factors.
    \item The derived subgroup of $G$ is denoted $G'$.  This is the subgroup generated by the set of all commutators $[g,h] = g\inv h\inv g h = g\inv g^h$, where $g,h\in G$.
    \item We say $G$ is perfect if $G'=G$.
    \item $\widehat{G}$ is the group of 1-dimensional characters of $G$ over $\k$. These are equivalently the group-like elements of $\du{G}$, the Hopf algebra dual to the group algebra $\k G$.
    \item $\Aut_c(G)= C_{\Aut(G)}(\Inn(G))$ is the central automorphism group of $G$.  Equivalently,
    \[ \Aut_c(G) = \{ \phi\in\Aut(G) \ | \ \phi(g)g\inv\in Z(G) \mbox{ for all } g\in G\}.\]
    \item Given another finite group $H$, we call $\phi\in\Hom(G,\widehat{H})$ a $G\times H$ bicharacter.  When $G=H$ we simply say that $\phi$ is a bicharacter (of $G$). We identify $\phi$ with the corresponding bilinear form $G\times H\to U(1)$ given by $(g,h)\mapsto \phi(g)(h)=\phi(g,h)$.
  \end{enumerate}
\end{df}
Unless otherwise noted, $G$ will always denote a group, and all groups will be finite.  Given a group $G$ we fix a (generally not unique) Krull-Schmidt decomposition
\begin{equation}\label{eq:decomp}
    G = G_0\times G_1\times\cdots \times G_n
\end{equation}
where $0\leq n\in \BZ$, $G_0$ is abelian (and possibly trivial), and $G_1,...,G_n$ are indecomposable non-abelian groups.  The value $n$ does not depend on the choice of decomposition, and unless otherwise noted all uses of $n$, such as references to subsets of $\{1,...,n\}$, will refer to this value.

Many of our results will have behavior that depends on whether or not $G_0$ is trivial.  The following classic result is the principle reason for this.
\begin{lem}[{\citep[Theorem 1]{AY65}}]\label{lem:Adney}
    Given a finite group $G$, consider the set map $F\colon \Aut_c(G)\to \Hom(G,Z(G))$ given by $\phi\mapsto \phi-1$.  Then $G$ is purely non-abelian if and only if $F$ is a bijection.  In any case the inverse mapping $\Img(F)\to\Aut_c(G)$ is given by $z\mapsto 1+z$.
\end{lem}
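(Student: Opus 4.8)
The plan is to realize the two displayed maps as mutually inverse bijections between $\Aut_c(G)$ and a subset of $\Hom(G,Z(G))$, and then to pin down exactly when that subset is everything. Abbreviate $F(\phi)=\phi-1$, so that $F(\phi)\colon g\mapsto\phi(g)g\inv$, and for $z\in\Hom(G,Z(G))$ set $E(z)=1+z$, so that $E(z)\colon g\mapsto z(g)g$. A short computation using that $\phi(g)g\inv$ is central shows $F(\phi)\in\Hom(G,Z(G))$; likewise, since $z(g)$ is central, $E(z)$ is a group endomorphism of $G$ with $E(z)(g)g\inv=z(g)\in Z(G)$, so $E(z)$ is automatically central in the relevant sense. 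The composites are $F(E(z))=z$ and $E(F(\phi))=\phi$, so $F$ is injective, $E$ restricts to a two-sided inverse on $\Img F$ (this yields the ``in any case'' clause, with inverse $z\mapsto 1+z$), and $\Img F=\{z\in\Hom(G,Z(G)):E(z)\in\Aut(G)\}$. Consequently $F$ is onto $\Hom(G,Z(G))$ precisely when $E(z)\in\Aut(G)$ for every such $z$, and it remains to match this condition with purely non-abelian.

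For the easy direction I argue contrapositively: if $G$ is not purely non-abelian, write $G=A\times H$ with $A\neq 1$ abelian, identify $A$ with $A\times\{1\}\subseteq Z(G)$, and let $z\colon G\to Z(G)$ be the projection onto $A$ followed by inversion on $A$. This is a homomorphism, and $E(z)$ has $A\times\{1\}$ in its kernel, so $E(z)\notin\Aut(G)$ and $F$ is not onto.

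The substance is the converse: assuming $G$ purely non-abelian, show $E(z)=1+z\in\Aut(G)$ for every $z\in\Hom(G,Z(G))$. As $G$ is finite it suffices to show $E(z)$ is injective, and $E(z)(g)=1$ forces $g=z(g)\inv\in Z(G)$, so $\ker E(z)\subseteq Z(G)$; on $Z(G)$ the endomorphism $E(z)$ equals $1+\bar z$, where $\bar z:=z|_{Z(G)}\in\End(Z(G))$. Everything therefore reduces to the key claim that \emph{$\bar z$ is nilpotent whenever $G$ is purely non-abelian}: granting it, $1+\bar z$ is a unit of the finite ring $\End(Z(G))$, hence injective on $Z(G)$, so $\ker E(z)=\ker(1+\bar z)=1$. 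To prove the claim, apply Fitting's lemma to $\bar z$ acting on the finite abelian group $Z(G)$: for $m$ large the subgroup $A:=\bar z^{\,m}(Z(G))$ is $\bar z$-stable and $\bar z|_A\in\Aut(A)$. Since $z^{\,N}(G)=\bar z^{\,N-1}(z(G))\subseteq\bar z^{\,N-1}(Z(G))=A$ once $N-1\geq m$, while $z^{\,N}(A)=\bar z^{\,N}(A)=A$, some power $z^{\,N}\colon G\to A$ is onto; composing with the inverse of the automorphism $z^{\,N}$ induces on $A$ yields a retraction $\sigma\colon G\to A$ of $G$ onto the central subgroup $A$. A retraction onto a central subgroup splits it off as a direct factor, $G=A\times\ker\sigma$ with $A$ abelian, so purely non-abelian forces $A=1$, i.e. $\bar z^{\,m}=0$. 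I expect this nilpotency claim — and especially the step promoting the Fitting image $A$ to a genuine direct factor of $G$ — to be the main obstacle; the surrounding reductions are routine bookkeeping with the convolution/antipode conventions.
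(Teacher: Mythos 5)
Your proof is correct. Note that the paper does not actually prove this lemma---it is quoted verbatim from Adney--Yen (their Theorem 1)---so there is no in-paper argument to compare against; what you have supplied is a self-contained proof of the cited classical result, and it holds up. Your reduction is the natural one: the convolution identities $F(E(z))=z$ and $E(F(\phi))=\phi$ show $F$ is always injective with inverse $z\mapsto 1+z$ on $\Img(F)$ (which settles the ``in any case'' clause), so the whole content is deciding when $1+z\in\Aut(G)$ for every $z\in\Hom(G,Z(G))$. Your handling of the nontrivial direction---observing $\ker(1+z)\subseteq Z(G)$, so injectivity need only be tested on the centre, and then proving $z|_{Z(G)}$ is nilpotent via Fitting's lemma together with the fact that a retraction of $G$ onto a central subgroup splits that subgroup off as an abelian direct factor, whence $1+z|_{Z(G)}$ is a unit of the finite ring $\End(Z(G))$---is sound at every step, including the construction of the retraction $\sigma=(\bar z^{\,N}|_A)^{-1}\circ z^{\,N}$ and the internal direct product decomposition $G=A\times\ker\sigma$. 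It is also worth pointing out that your key nilpotency claim is precisely the characterization the paper itself invokes later, in the proof of \cref{lem:finite-comp}, where pure non-abelianness is identified (again via Fitting's Lemma) with nilpotency of every element of $\Hom(G,Z(G))$; your retraction argument gives a clean, elementary proof of that equivalence, so your route is in the same spirit as the classical Adney--Yen argument the paper relies on, just written out in full.
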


\begin{df}
Given an element $\phi\in\End(G)$ we define the components $\phi_{i,j}\colon G_j\to G_i$ for all $0\leq i,j\leq n$ by the obvious restrictions.
\end{df}
These components naturally define an $(n+1)\times (n+1)$ matrix of morphisms, which are evaluated from the right, and completely determine $\phi$ \citep{BCM}.

\begin{example}
  If $G=G_1\times G_2$ then $\phi\in\End(G)$ is written as
  \[ \begin{pmatrix}
    \phi_{1,1} & \phi_{1,2}\\
    \phi_{2,1} & \phi_{2,2}
  \end{pmatrix},\]
  and its action is
  \[ \begin{pmatrix}
    \phi_{1,1} & \phi_{1,2}\\
    \phi_{2,1} & \phi_{2,2}
  \end{pmatrix}(g,h) = (\phi_{1,1}(g)\phi_{1,2}(h), \phi_{2,1}(g)\phi_{2,2}(h)).\]
  In general the $(i,j)$ entry of $\phi$ as a matrix is precisely $\phi_{i,j}\colon G_j\to G_i$.
\end{example}

\subsection{The morphism \texorpdfstring{$p$}{p}}
A particular type of morphism of Hopf algebras will feature prominently throughout the paper, which we now describe.
\begin{lem}\label{lem:p-props}
Let $p\colon\du{G}\to\k G$ be a morphism of Hopf algebras.  Then the following hold.
\begin{lemenum}
  \item \citep[Theorem 3.1]{K14} There exist isomorphic abelian subgroups $A,B\subseteq G$ such that $\Img(p)=\k B$ and $p(e_g)\neq 0$ if and only if $g\in A$.\label{lem-part:AB-1}
  \item \citep[Theorem 3.1]{K14} $p$ restricts to a group isomorphism $\widehat{A}\to B$.  As a consequence, $\{p(e_a)\}_{a\in A}$ is a basis of $\k B$.\label{lem-part:AB-2}
  \item \citep[Theorem 3.15]{K17:Twisted} There exists a unique $A\times B$ bicharacter $\sigma$ such that for all $a\in A$
  \[ p(e_a) = \frac{1}{|A|}\sum_{b\in B}\sigma(a,b)b.\]
  Moreover, every element of $\widehat{A}$ is equal to $\sigma(\cdot,b)$ for some unique $b\in B$, and every element of $\widehat{B}$ is equal to $\sigma(a,\cdot)$ for some unique $a\in A$.\label{lem-part:sigma}
  \item The set of all such $p$ with $A,B\subseteq Z(G)$ forms a group under the convolution product. This group is canonically isomorphic to $\Hom(\widehat{Z(G)},Z(G))$, with the isomorphism given by the obvious restriction maps.
\end{lemenum}
\end{lem}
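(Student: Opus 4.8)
The plan is to localize the whole statement at the central subgroup $Z:=Z(G)$, where commutativity tames the convolution product. Let $\iota\colon\k Z\hookrightarrow\k G$ be the inclusion and let $\rho\colon\du{G}\to\du{Z}$ be the Hopf surjection dual to $\iota$, i.e.\ restriction of functions: $\rho(e_g)=e_g$ for $g\in Z$ and $\rho(e_g)=0$ otherwise, so that $\ker\rho$ is spanned by the $e_g$ with $g\notin Z$. The first step is to show that our set is exactly $\{\,\iota\circ q\circ\rho : q\in\Hom(\du{Z},\k Z)\,\}$ (Hopf morphisms throughout) and that $p\mapsto q$ is a bijection onto $\Hom(\du{Z},\k Z)$. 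Indeed, if $p$ lies in our set then $\Img(p)=\k B\subseteq\k Z$ by \cref{lem-part:AB-1}, while $p(e_g)=0$ for $g\notin A$, hence for every $g\notin Z$; thus $\ker\rho\subseteq\ker p$, so $p$ descends along the Hopf surjection $\rho$ to a Hopf morphism $q\colon\du{Z}\to\k Z$ with $p=\iota q\rho$. Conversely, for any $q\in\Hom(\du{Z},\k Z)$ the composite $\iota q\rho$ is a Hopf morphism $\du{G}\to\k G$ whose image lies in $\k Z$ and which annihilates every $e_g$ with $g\notin Z$, so by \cref{lem-part:AB-1} its associated subgroups lie in $Z$. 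Since $\rho$ is surjective and $\iota$ injective, these two constructions are mutually inverse.

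Next I transport the convolution product across this bijection. As $\rho$ is a coalgebra map and $\iota$ an algebra map, a one-line Sweedler computation gives $p*p'=\iota\circ(q*q')\circ\rho$ whenever $p,p'$ correspond to $q,q'$. The key point is that $\du{Z}$ is cocommutative and $\k Z$ is commutative, both because $Z$ is abelian: cocommutativity of the source makes a convolution of coalgebra maps a coalgebra map, and commutativity of the target makes a convolution of algebra maps an algebra map, so $q*q'$ is again a Hopf morphism $\du{Z}\to\k Z$. Hence $p*p'$ again lies in our set and corresponds to $q*q'$; in particular the set is closed under convolution, and $p\mapsto q$ is a monoid isomorphism onto $(\Hom(\du{Z},\k Z),*)$.

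It remains to identify the latter. Since $Z$ is finite abelian there is a canonical Hopf isomorphism $\du{Z}\cong\k\widehat{Z}$ (sending $\chi\in\widehat{Z}$ to $\sum_{z\in Z}\chi(z)e_z$), under which a Hopf morphism $\du{Z}\to\k Z$ is the same as a Hopf morphism $\k\widehat{Z}\to\k Z$, hence---restricting to, or linearly extending from, grouplikes---the same as a group homomorphism $\widehat{Z}\to Z$; on a grouplike $\chi$ one has $(f*g)(\chi)=f(\chi)g(\chi)$, so convolution goes over to the pointwise product of homomorphisms. Thus $\Hom(\du{Z},\k Z)$ is a group canonically isomorphic to $\Hom(\widehat{Z(G)},Z(G))$, and combining the three steps shows that the set of $p$ with $A,B\subseteq Z(G)$ is a group under convolution, canonically isomorphic to $\Hom(\widehat{Z(G)},Z(G))$; unwinding the identifications, the isomorphism sends $p$ to the homomorphism $\widehat{Z(G)}\to Z(G)$ it induces on grouplikes, which is well-defined precisely because $A,B\subseteq Z(G)$. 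The only step with genuine content---the expected main obstacle---is the closure of $\Hom(\du{Z},\k Z)$ under convolution; that is where the hypothesis $A,B\subseteq Z(G)$, through the abelianness of $Z(G)$, is essential, everything else being bookkeeping around the factorization $p=\iota q\rho$ and the standard isomorphism $\du{Z}\cong\k\widehat{Z}$.
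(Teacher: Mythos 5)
Your argument is correct, and it proves exactly the part that needs proof: parts (i)--(iii) are quoted results, and the paper itself disposes of (iv) in one line, deducing it from (ii) with a pointer to an external reference rather than spelling out an argument. Your route is therefore a self-contained replacement rather than a transcription of the paper's: instead of invoking the isomorphism $\widehat{A}\to B$ from (ii) (which gives the homomorphism $\widehat{Z(G)}\to\widehat{A}\xrightarrow{\sim}B\subseteq Z(G)$ directly by restriction of characters), you factor each admissible $p$ as $\iota q\rho$ through $\du{Z(G)}\to \k Z(G)$, get closure under convolution from the standard fact that Hopf morphisms out of a cocommutative Hopf algebra into a commutative one are stable under convolution, and then identify $\Hom(\du{Z(G)},\k Z(G))$ with $\Hom(\widehat{Z(G)},Z(G))$ via $\du{Z(G)}\cong\k\widehat{Z(G)}$ and grouplikes. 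All the steps check out: the factorization is legitimate because $\ker\rho$ is spanned by the $e_g$ with $g\notin Z(G)$ and is killed by $p$ (here (i) is used, with $A,B\subseteq Z(G)$), the surjectivity of $\rho$ lets you cancel it to see $q$ is a Hopf map, and deducing the existence of inverses from the monoid isomorphism onto the group $\Hom(\widehat{Z(G)},Z(G))$ is sound (alternatively, the convolution inverse is $p\circ S$). What your approach buys is independence from part (ii) and an explicit description of where abelianness of $Z(G)$ enters; what the paper's shortcut buys is brevity, since (ii) already hands you the character-restriction description of the isomorphism.
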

\begin{proof}
  The first, second, and third items are given by the indicated references.  The fourth is a consequence of the second.  See also \citep[Proposition 5.2]{K14} and its proof.
\end{proof}
For the remainder of the paper, all uses of $A,B,\sigma$ will refer to the subgroups and bicharacter from the preceding lemma.

\subsection{The double of \texorpdfstring{$G$}{G}}
Next we define the Hopf algebra $\D(G)$, the Drinfeld double of $G$ over $\k$.  As a coalgebra this is $\duc{G}\ot\k G$.  Denoting elements of $\D(G)$ by $f\# g$, $f\in\duc{G}$, $g\in G$, the algebra structure is given by the semidirect product formula
\[ (f\# g)\cdot (f'\# g') = f (g\rhup f')\# gg'.\]
The identity is $\varepsilon\#1$.  The antipode is
\[ S(e_g\# x) = e_{x\inv g\inv x}\# x\inv.\]
$\D(G)$ and $\k G$ are simultaneously semisimple, and in particular are semisimple for any $G$ when $\k=\BC$.

$\D(G)$ admits the structure of a ribbon Hopf algebra in the following two standard ways, the first of which will be of principle interest here.
\begin{df}\label{df:standard}
  The standard quasitriangular structure of $\D(G)$ is
  \[ R_0 = \sum_{g\in G}\varepsilon\# g\ot e_g\# 1,\]
  with Drinfeld and ribbon element
  \[ u_{R_0} = \sum_{g\in G}e_g\# g\inv.\]

  Depending on choices of notation, sometimes the quasitriangular structure of $\D(G)$ is instead defined as
  \[ R_1 = \tau(R_0\inv) = \sum_{g\in G} e_g\# 1\ot \varepsilon\# g\inv,\]
  with Drinfeld and ribbon element
  \[ u_{R_1} = \sum_{g\in G}e_g\# g.\]
\end{df}

However, in general $\D(G)$ can have many additional quasitriangular structures, and the author has completely classified them for arbitrary $G$ in \citep{K3:QTS}. There is also a more categorical classification due to \citet{Nik:braidings}. While the two classifications are necessarily equivalent, there is currently no direct proof of this equivalence.  One of our goals for the paper will be to prove one direction of such an equivalence, by showing the braidings of \citep{K3:QTS} are at least a subset of the braidings in \citep{Nik:braidings}.

We state the desired classification for a group with given decomposition as in \cref{eq:decomp}.

\begin{thm}\citep[Theorem 7.4]{K3:QTS}\label{thm:qts}
	Let $G$ be a group. The quasitriangular structures of $\D(G)$ are those $R\in \D(G)\ot\D(G)$ which can be written in the form
    \begin{align}\label{eq:R-def}
        R = \sum_{s,t,a\in G} e_s\# au(t)\ot r(s)e_t\# p(e_a)v(s\inv)
    \end{align}
    where
	\begin{enumerate}
		\item $p\in\Hom(\widehat{Z(G)},Z(G))$;
		\item $r\in\Hom(G,\widehat{G})$ is a bicharacter;
        \item $u,v\in\End(G)$;
        \item For each $1\leq i\leq n$ exactly one of the following holds:
		\begin{enumerate}
			\item $v_{i,i}\in\Hom(G_i,Z(G_i))$, $u_{i,i}\in\Aut_c(G_i)$;
			\item $v_{i,i}\in\Aut_c(G_i)$, $u_{i,i}\in\Hom(G_i,Z(G_i))$.
		\end{enumerate}
		\item $u,v$ are normal endomorphisms of $G$, meaning that $v(g^x)=v(g)^x$ for all $g,x\in G$, and similarly for $u$.  As a consequence, $\Img(u_{i,j}),\Img(v_{i,j})\subseteq Z(G_j)$ whenever $i\neq j$;
	\end{enumerate}
    We call the quadruple $\cmorph$ the components of $R$, and we equate $R=\cmorph$ whenever convenient.
\end{thm}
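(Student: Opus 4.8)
The plan is to pass through the standard correspondence between quasitriangular structures and Hopf algebra morphisms, read off $(u,r,p,v)$ as that morphism's components, and then pin down the numerical conditions (iv)--(v) by a componentwise analysis along \eqref{eq:decomp}. Recall that for a finite-dimensional Hopf algebra $H$, an invertible $R\in H\ot H$ satisfying the two hexagon identities and the counit conditions is the same thing as a Hopf algebra morphism $f_R\colon H^{*\mathrm{cop}}\to H$, $\xi\mapsto(\xi\ot\id)(R)$, and such an $R$ is moreover a quasitriangular structure exactly when it is quasi-cocommutative, $\Delta^{\mathrm{op}}(h)R=R\Delta(h)$ for all $h$. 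I would specialize this to $H=\D(G)$, exploiting that $\du{G}$ and $\k G$ are each simultaneously a Hopf subalgebra and a Hopf quotient of $\D(G)$ (the former because $\du{G}$ is normal, the latter because the ideal generated by the elements $\varepsilon\#g-\varepsilon\#1$ is a Hopf ideal); dually, $\D(G)^{*\mathrm{cop}}$ is generated as an algebra by copies of $\k G$ and $\du{G}$, so $f_R$ is determined by its restrictions to these. It is useful to note that the group-like elements of $\D(G)$ form the group $\widehat G\times G$.

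Composing each such restriction with each of the two Hopf quotients $\D(G)\twoheadrightarrow\du{G}$ and $\D(G)\twoheadrightarrow\k G$ produces four Hopf morphisms, and these will be exactly the data $(u,r,p,v)$. A Hopf morphism between two group-algebra-type factors is an element of $\End(G)$, since Hopf endomorphisms of $\k G$---equivalently of $\du{G}$ by duality---are precisely the group endomorphisms of $G$; this yields $u$ and $v$. A Hopf morphism $\k G\to\du{G}$ is exactly a bicharacter $r\in\Hom(G,\widehat G)$. A Hopf morphism $\du{G}\to\k G$ is of the form classified in \cref{lem:p-props}, governed by abelian subgroups $A,B\le G$ and a bicharacter $\sigma$; the remaining axioms then force $A,B\le Z(G)$, so this datum is exactly $p\in\Hom(\widehat{Z(G)},Z(G))$. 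Reassembling---which requires some care, since the quotient maps alone do not separate $\D(G)$---gives the formula \eqref{eq:R-def} and conditions (i)--(iii), with the centrality of $\Img p$ and $r$ being a genuine bicharacter being precisely what makes $R$ satisfy the two hexagons.

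What remains is to characterize when the $R$ of \eqref{eq:R-def} is quasi-cocommutative. I would test $\Delta^{\mathrm{op}}(h)R=R\Delta(h)$ on the algebra generators $h=\varepsilon\#g$ and $h=e_g\#1$ and expand both sides in the basis $\{e_g\#x\}$ of $\D(G)$, producing a finite system of group-theoretic identities in $u,v,r,p$. Substituting \eqref{eq:decomp} and reading off the $(i,j)$-blocks, the equations with $i\ne j$ force $u$ and $v$ to be normal endomorphisms with the stated restriction on their off-diagonal blocks, which is condition (v); on each non-abelian factor $G_i$ the equations couple $u_{i,i}$ and $v_{i,i}$. Since such a $G_i$ is indecomposable non-abelian, it is in particular purely non-abelian, so \cref{lem:Adney} identifies $\Aut_c(G_i)$ with $1+\Hom(G_i,Z(G_i))$; fed into the coupled constraint this forces the rigid dichotomy of condition (iv). Conversely, substituting any quadruple meeting (i)--(v) back into the quasitriangularity axioms confirms it is an honest quasitriangular structure.

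The hard part will be this last step on the non-abelian factors---showing that the coupled constraint on $(u_{i,i},v_{i,i})$ genuinely \emph{forces} the all-or-nothing dichotomy (iv) rather than merely being implied by it. The obstacle is that $u_{i,i}$ and $v_{i,i}$ interact simultaneously with one another, with the off-diagonal blocks, and with $r|_{G_i}$, so one must localize the analysis to the individual indecomposable non-abelian factors, control the interference from the abelian part $G_0$, and use the Adney--Yen description in \cref{lem:Adney} to cleanly peel apart the automorphism-type behaviour of one of $u_{i,i},v_{i,i}$ from the central-homomorphism-type behaviour of the other.
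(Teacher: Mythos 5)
This statement is not proved in the paper at all: it is quoted verbatim from \citep[Theorem~7.4]{K3:QTS}, so there is no in-paper argument to compare against. That said, your overall skeleton --- identify an invertible $R$ satisfying the hexagon/counit axioms with a Hopf algebra morphism $\D(G)^{*\mathrm{co}}\to\D(G)$ (a weak $R$-matrix), read off four ``corner'' morphisms $u,r,p,v$, and then impose quasi-cocommutativity componentwise along \eqref{eq:decomp} --- is indeed the strategy of the cited source, as the present paper itself indicates in \cref{ex:weak} and \cref{thm:cwr}.

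As a proof, however, the proposal has two genuine gaps, both at exactly the places you flag as ``requiring care.'' First, the passage from the four corner morphisms to the explicit form \eqref{eq:R-def} is where most of the work lives: $\D(G)$ is a smash product, not a tensor product of Hopf algebras, so the compatibility conditions allowing the corners to reassemble into a single Hopf morphism are substantive, and in particular the containment $A,B\subseteq Z(G)$ (i.e.\ $p\in\Hom(\widehat{Z(G)},Z(G))$) is \emph{not} a consequence of the hexagon identities: weak $R$-matrices form a strictly larger class than quasitriangular structures (\cref{ex:weak}), and a general Hopf morphism $\du{G}\to\k G$ as in \cref{lem:p-props} only has $A,B$ abelian. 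Centrality of $\Img(p)$, normality of $u,v$, and the diagonal dichotomy are all consequences of the quasi-cocommutativity condition $\Delta^{\op}(h)R=R\Delta(h)$, which you defer to the final step; so your remark that centrality of $\Img p$ is ``precisely what makes $R$ satisfy the two hexagons'' has the logic backwards. Second, the dichotomy (iv) is the heart of the theorem, and your outline only asserts that a ``coupled constraint'' plus \cref{lem:Adney} forces it. The actual mechanism is, roughly, that testing quasi-cocommutativity on the elements $\varepsilon\# g$ forces the convolution $u-v$ (i.e.\ $g\mapsto u(g)v(g\inv)$) to implement the same conjugation as $g$ up to central factors; restricted to an indecomposable non-abelian factor $G_i$ one must then argue, using indecomposability, the nilpotence of elements of $\Hom(G_i,Z(G_i))$ (Fitting), and \cref{lem:Adney}, that $u_{i,i}-v_{i,i}$ differs from the identity or from inversion by a central-valued homomorphism, and hence that exactly one of $u_{i,i},v_{i,i}$ lies in $\Aut_c(G_i)$ while the other lands in $Z(G_i)$. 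None of this is carried out, and the converse direction (that every quadruple satisfying (i)--(v) really yields a quasitriangular structure) is likewise a nontrivial verification rather than a formality. In short: right roadmap, matching the cited source, but the two decisive steps are stated as conclusions rather than proved.
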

Note that $u$ and $v$ have images which commute with each other elementwise, and that there are no restrictions on $v_{0,0}$ and $u_{0,0}$.  For the full details on normal endomorphisms that are relevant to this classification see \citep[Section 2]{K3:QTS}.

\begin{example}
  If $p,r$ satisfy the conditions of the theorem then $\cmorph[1][r][p][0]$, $\cmorph[1][r][0][0]$, $\cmorph[1][0][p][0]$, $\cmorph[0][r][p][1]$, etc. are all quasitriangular structures of $\D(G)$.
\end{example}

\begin{thm}\citep[Theorem 8.1]{K3:QTS}\label{thm:ribbon}
    The Drinfeld element of $(\D(G),R)$ is
    \begin{align}\label{eq:rib-def}
        u_R=\sum_{a,s\in G} r(s)e_{s\inv}\# p(e_a)a\inv v(s\inv)u(s).
    \end{align}
    Moreover, $u_R$ is also a ribbon element for $(\D(G),R)$.
\end{thm}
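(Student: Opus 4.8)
The plan is to compute the Drinfeld element $u_R=\sum S(R^{(2)})R^{(1)}$ directly from \eqref{eq:R-def} (normalized so that $u_{R_0}=\sum_g e_g\# g\inv$, cf.~\cref{df:standard}), and then to verify the ribbon axioms for $\theta:=u_R$.

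For the formula, split $R$ so that $R^{(1)}=e_s\# au(t)$ and $R^{(2)}=r(s)e_t\# p(e_a)v(s\inv)$ with $s,t,a$ summed over $G$, and expand $p(e_a)=\frac1{|A|}\sum_{b\in B}\sigma(a,b)b$ using the bicharacter description in \cref{lem:p-props}. Apply $S(e_g\# x)=e_{x\inv g\inv x}\# x\inv$ to each term $e_t\# bv(s\inv)$; since the hypothesis $p\in\Hom(\widehat{Z(G)},Z(G))$ forces $A,B\subseteq Z(G)$ (again by \cref{lem:p-props}), all the elements $a,b$ are central, so the conjugations by $b$ drop out and $v(s\inv)\inv=v(s)$ because $v\in\End(G)$. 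Multiplying by $R^{(1)}$ through $(f\# g)(f'\# g')=f(g\rhup f')\# gg'$ with $g\rhup e_x=e_{gxg\inv}$, the product of idempotents forces $t=s\inv$, and the surviving idempotent $e_{v(s)sv(s)\inv}$ collapses to $e_s$ once one observes that normality of $v$, applied to $g=s$ with $s^s=s$, gives $[v(s),s]=1$. Recollecting the leftover $b$-sum into a value of $p$ — using that $p$ is a Hopf map, so $\frac1{|A|}\sum_b\sigma(a,b)b\inv=S_{\k G}(p(e_a))=p(e_{a\inv})$ — and reindexing $a\mapsto a\inv$, $s\mapsto s\inv$ yields exactly \eqref{eq:rib-def}.

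For the ribbon claim, recall that a ribbon element is a central $\theta$ with $\varepsilon(\theta)=1$, $S(\theta)=\theta$, $\theta^2=u_RS(u_R)$, and $\Delta(\theta)=(\tau(R)R)\inv(\theta\ot\theta)$. The relations $\varepsilon(u_R)=1$ and $\Delta(u_R)=(\tau(R)R)\inv(u_R\ot u_R)$ are the standard identities for the Drinfeld element of any quasitriangular Hopf algebra \citep{Mon:HAAR}, and the identity $S^2(h)=u_Rhu_R\inv$ together with $S^2=\id$ (valid because $\D(G)$ is semisimple over $\k=\BC$) shows $u_R$ is central. Hence $\theta:=u_R$ will be a ribbon element provided $S(u_R)=u_R$, which then also gives $\theta^2=u_R^2=u_RS(u_R)$. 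I would prove $S(u_R)=u_R$ by applying $S$ to \eqref{eq:rib-def} with the same bookkeeping: centrality of $A,B$, the relation $v(s\inv)\inv=v(s)$, the normality identities $[v(s),s]=[u(s),s]=1$ (again coming from $s^s=s$), and the fact that $\Img u$ and $\Img v$ commute elementwise (noted after \cref{thm:qts}) collapse all the conjugations, and the reindexings $a\mapsto a\inv$, $s\mapsto s\inv$ return the original expression.

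The only real obstacle is the computational bookkeeping: transporting the idempotents $e_g$ through the antipode and the semidirect-product multiplication, isolating the one surviving term of the triple sum, and arranging the commutation and normality simplifications so that both the $\du{G}$- and $\k G$-components land on \eqref{eq:rib-def} exactly. Conceptually everything is driven by three inputs — $p$ has central image, $u$ and $v$ are normal endomorphisms, and $\Img u$, $\Img v$ commute — and the step worth flagging is the use of $s^s=s$ in the normality relation to deduce $[v(s),s]=[u(s),s]=1$, which is precisely what makes the conjugated idempotents (and the conjugated group-algebra factors) collapse.
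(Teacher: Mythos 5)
This theorem is imported by the paper from \citep[Theorem 8.1]{K3:QTS} without any proof being reproduced, so there is no in-paper argument to compare against; your direct verification has to stand on its own, and it does. The computation of $u_R=\sum S(R^{(2)})R^{(1)}$ goes through exactly as you outline: your convention is the one matching the paper's normalization $u_{R_0}=\sum_g e_g\# g\inv$; the idempotent product does force $t=s\inv$; normality with $g=x=s$ (using $s^s=s$) gives $[u(s),s]=[v(s),s]=1$, which is precisely what collapses $e_{v(s)sv(s)\inv}$ to $e_s$; centrality of $A,B\subseteq Z(G)$ (\cref{lem:p-props}) and $S_{\k G}(p(e_a))=p(e_{a\inv})$ reassemble the $b$-sum correctly. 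One bookkeeping point you pass over silently: after reindexing $s\mapsto s\inv$ the scalar becomes $r(s\inv,s)$ rather than the $r(s,s\inv)$ appearing in \cref{eq:rib-def}, but these coincide (both equal $r(s,s)\inv$) because $r$ is a homomorphism into $\widehat{G}$, so no harm is done. For the ribbon claim, reducing everything to $S(u_R)=u_R$ via the standard Drinfeld identities plus $S^2=\id$ (which for $\D(G)$ can even be checked directly from the antipode formula, without invoking Larson--Radford) is sound, and the symmetry computation does close using exactly the inputs you flag; note that the elementwise commutation of $\Img u$ and $\Img v$ is genuinely needed at the final step to turn $u(s)v(s\inv)$ back into $v(s\inv)u(s)$, so it is not a cosmetic hypothesis. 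Since the proof in \citep{K3:QTS} is not available here, your argument should be regarded as an independent verification rather than a reconstruction of that source's proof, but as a proof of the stated theorem it is complete.
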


\subsection{Representations of \texorpdfstring{$\D(G)$}{D(G)}}
Next we recall a few of the basics about $\Rep(\D(G))$, which can be found in \citep{DPR}.  We adopt here the notation used in \citep[Section 15]{K17:Twisted}

Suppose we are given a group $G$, $s\in G$, and a representation $\rho$ of $C_G(s)$ on a finite dimensional $\k$-vector space $V$.  Let $\class(s)=\{s_0=s,s_1,..,s_m\}$, denoted simply $\{s_j\}$, be the conjugacy class of $s$ in $G$.  For each $j$ we let $t_j\in G$ be such that $t_j s t_j\inv=s_j$. This means that the $t_j$ are a complete set of left coset representatives of $C_G(s)$ in $G$. By convention we always select $t_0=1$.  The data $(\{s_j\},\{t_j\},\rho)$ gives a representation $W$ of $\D(G)$, which we identify with this data, in the following manner.  As a vector space $W$ is the graded vector space $\bigoplus_j t_j\ot V$.  We denote a homogeneous element of $W$ by $\ket{t_j,w}$, where $w\in V$.  The left action of $\D(G)$ is then given by
\begin{equation}
  e_g\#x\cdot\ket{t_j,w} = \delta_{g^x,s_k}\ket{t_k,\rho(h)(w)},
\end{equation}
where $xt_j=t_k h$ for some $h\in C_G(s)$.  By definitions, the values of $t_k$ and $h$ are uniquely determined by the given $x,t_j$ (and conversely).  The isomorphism class of $W$ in fact depends only on $\class(s)$ and the isomorphism class of $\rho$.  The irreducible representations of $\D(G)$ are, in particular, parameterized by pairs $(s,\chi)$ where $s\in G$ and $\chi$ is an irreducible character of $C_G(s)$.

\begin{df}\label{def:DG-basis}
  When $\D(G)$ has precisely $m$ irreducible representations, up to isomorphism, for $1\leq a\leq m$ we let $X_a=(t_a,\psi_a)$ denote a fixed representative for an isomorphism class. We then define $\Gamma=\{1,...,m\}$, the set of labels for the given (but usually arbitrary) complete set of representatives for the isomorphism classes.
\end{df}

\begin{rem}
  A more categorical interpretation of $\Rep(\D(G))$ is as $\Z(\VecG_G)$, the categorical center of the pointed fusion category of finite dimensional $G$-graded vector spaces.  As our principle point of attack on this category will be through the Hopf algebra $\D(G)$, we prefer the above characterization.
\end{rem}

\begin{df}\label{def:lambda}
For a group $H$ and a character (or representation) of $H$, we let $\lambda_\chi$ denote the linear character (equiv. representation) of $Z(H)$ obtained by restriction and rescaling: $\lambda_\chi(z)\chi(1) = \chi(z)$ for all $z\in Z(H)$.
\end{df}
By definition $\lambda_\chi$ restricts to an element of $\widehat{C}$ for every subgroup $C\subseteq Z(H)$.

We adopt the convention to omit obvious restriction morphisms whenever convenient.  In particular we omit restriction to the subgroup $C$ as above whenever $C$ is clear from the context, as well as the restriction morphism $\Res_{Z(G)}^G\colon\widehat{G}\to\widehat{Z(G)}$.  This latter case is useful for composing morphisms $p\in\Hom(\widehat{Z(G)},Z(G))$ and $r\in\BCh{G}$, which gives $pr\in\Hom(G,Z(G))$.

\subsection{Modular categories}
Our references for the theory of (modular, braided) tensor and fusion categories will be \citep{BakKir:book,EGNO,mueger:modular}.

\begin{df}
Let $\mathcal{C}$ be a braided fusion category.  We call a complete set of representatives for the isomorphism classes of the simple (=irreducible) objects of $\mathcal{C}$ a basis for $\mathcal{C}$.

We let $\Gamma=\{1,...,m\}$ be a set of labels for an enumeration of a given basis.
\end{df}
\begin{example}
The $X_a=(t_a,\psi_a)$ from \cref{def:DG-basis} define a basis of $\Rep(\D(G))$.
\end{example}

Any braided fusion category defines a pair of matrices, called the $T$-matrix and $S$-matrix (the latter is not to be confused with an antipode), which act on the linear span of the simple objects---equivalently here, they act on the complexification of the Grothendieck ring. Together these are called the modular data, and afford a projective representation of $SL_2(\BZ)$.

We first consider the $T$-matrix.
\begin{df}\label{def:T-def}
Given a braided fusion category $\C$, the $T$-matrix is the diagonal matrix such that for a simple object $X$ then $T_X=T_{X,X}\in U(1)$ describes the pivotal trace of the following diagram on $X$.
\begin{center}
\begin{tikzpicture}
	\begin{pgfonlayer}{nodelayer}
		\node [style=none] (0) at (-0.25, 2.75) {};
		\node [style=none] (1) at (-0.25, -0.375) {};
		\node [style=none] (2) at (-0.25, .75) {};
		\node [style=none] (3) at (.875, 1.875) {};
		\node [style=none] (4) at (.875, .625) {};
		\node [style=none] (5) at (-0.25, 2) {};
		\node [style=none] (6) at (0, 1.25) {};
		\node [style=none] (7) at (-0.125, 1.5) {};
	\end{pgfonlayer}
	\begin{pgfonlayer}{edgelayer}
		\draw (1.center) to (2.center);
		\draw [in=180, out=90] (2.center) to (3.center);
		\draw [bend right=90, looseness=1.50] (4.center) to (3.center);
		\draw (0.center) to (5.center);
		\draw [bend right=15] (5.center) to (7.center);
		\draw [bend left] (4.center) to (6.center);
	\end{pgfonlayer}
\end{tikzpicture}
\end{center}
    Our string diagrams will be read from top to bottom, and the crossing convention is such that the crossing in the above is the inverse braiding.
\end{df}
For a semisimple ribbon Hopf algebra $(H,R,\nu)$ and simple object $X$, $T_X$ is precisely the scalar by which $\nu\inv$ acts on $X$.

In a slight abuse of notation we let $T^R$ denote the $T$-matrix of $\Rep(\D(G),R,u_R)$.  When $R=R_0$ we simply use $T$.

\begin{example}\label{ex:standard-ribbon}
  For $(\D(G),R_0,u_{R_0})$, the element $u_{R_0}\inv$ acts on $(g,\chi)$ by the scalar $\chi(g)/\chi(1)$.

  Similarly, for $(\D(G),R_1,u_{R_1})$ the element $u_{R_1}\inv$ acts on $(g,\chi)$ by the scalar $\chi(g\inv)/\chi(1)$.  This is simply the conjugate of the scalar we get with the inverse braiding $R_0$.
\end{example}

Next we describe the $S$-matrix.
\begin{df}\label{def:S-matrix}
    Given a braided fusion category $\C$ with braiding $c$ and basis $\{X_1,...,X_m\}$ the $S$-matrix $S$ is the $m\times m$ matrix with $S_{i,j}=S_{X_i,X_j}$ given by the trace of the double-braiding
    \[ c_{X_j,X_i}\circ c_{X_i,X_j} \colon X_i\ot X_j\to X_i\ot X_j.\]
    Graphically, the entries are given by the pivotal trace of the following diagram on pairs of simple objects:
    \begin{center}
    \begin{tikzpicture}
      \braid[number of strands=2] (braid) a_1 a_1;
    \end{tikzpicture} \ \ .
    \end{center}
    We say that $\C$ is modular if the $S$-matrix is invertible.
\end{df}
In the case of a quasitriangular Hopf algebra $(H,R)$, the braiding of $\Rep(H,R)$ is given by $v\ot w\to R\inv\cdot w\ot v$, so that the double-braiding is (left) multiplication by $R\inv\tau(R\inv)$. For any quasitriangular structure $R$ of $\D(G)$ we let $S^R$ denote the corresponding $S$-matrix.  When $R=R_0$ we let $S$ stand for $S^{R_0}$.

It is well-known that a symmetric spherical category has an $S$-matrix of rank 1.  Examples of such categories are given by $\Rep(A)$ for any abelian finite group $A$, equipped with the trivial braiding and ribbon element.  Since $\D(A)$ also admits the trivial quasitriangular structure and ribbon element when $A$ is abelian, we can therefore naturally expect that abelian direct factors will introduce pathological behaviors in the modular data for $\Rep(\D(G),R,u_R)$. Namely, the $S$-matrix can fail to be invertible, but the following shows this does not happen for purely non-abelian groups.

\begin{thm}\label{thm:is-modular}
  The following are equivalent for a finite group $G$.
  \begin{enumerate}
    \item $G$ is purely non-abelian.
    \item $\Rep(\D(G),R,u_R)$ is a factorizable modular category for every quasitriangular structure $R$ of $\D(G)$.
  \end{enumerate}
\end{thm}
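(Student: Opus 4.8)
The plan is to prove the two implications separately. Throughout we use that $\D(G)$ is semisimple (as $\k=\BC$), so that by the standard theory of ribbon fusion categories $\Rep(\D(G),R,u_R)$ is modular if and only if the ribbon Hopf algebra $(\D(G),R,u_R)$ is factorizable, i.e.\ the Drinfeld map $\Phi_R\colon\D(G)^*\to\D(G)$, $f\mapsto(\id\otimes f)(R_{21}R)$, is bijective; since $\dim\D(G)^*=\dim\D(G)$ it then suffices to decide surjectivity. Equivalently, in the language of $\Z(\VecG_G)$, modularity is equivalent to triviality of the M\"uger centre of $\Rep(\D(G),R,u_R)$, i.e.\ to the unit being the only transparent simple object, where $X=(s,\chi)$ is transparent precisely when $\Phi_R(f)$ acts on $X$ by the scalar $f(1)$ for every $f\in\D(G)^*$. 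This observation also identifies the categorical notion of ``factorizable'' in the statement with factorizability of $\D(G)$ as a Hopf algebra, so it is enough to settle modularity.

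For $(2)\Rightarrow(1)$ I argue the contrapositive. Suppose $G$ is not purely non-abelian, so that $G_0$ is nontrivial in the decomposition $G=G_0\times G_1\times\cdots\times G_n$. Take the components with $r=0$, $p$ trivial, $v=0$, and $u$ the projection $G\to G_1\times\cdots\times G_n\hookrightarrow G$ (so $u_{0,0}=0$ and $u_{i,i}=\id_{G_i}$ for $i\geq1$, all other blocks zero). One checks directly that this data satisfies every condition of \cref{thm:qts} — on each non-abelian factor case (4)(a) holds, and there are no constraints on the $G_0$-block — and that the corresponding $R$ of \eqref{eq:R-def} becomes, under the canonical isomorphism $\D(G)\cong\D(G_0)\otimes\D(G_1\times\cdots\times G_n)$, the tensor product of the \emph{trivial} quasitriangular structure on $\D(G_0)$ with the standard structure $R_0$ on $\D(G_1\times\cdots\times G_n)$; similarly $u_R$ becomes $1\otimes u_{R_0}$. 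Hence $\Rep(\D(G),R,u_R)$ is the Deligne product $\Rep(\D(G_0))\boxtimes\Rep(\D(G_1\times\cdots\times G_n),R_0,u_{R_0})$ of ribbon categories, the first factor carrying its trivial braiding and ribbon element, hence being a symmetric spherical category whose $S$-matrix has rank $1$ (as recalled just before the theorem) while having size $|G_0|^2>1$; so that $S$-matrix is singular, $S^R$ is a Kronecker product with it and hence singular, and the category is not modular.

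For $(1)\Rightarrow(2)$, assume $G=G_1\times\cdots\times G_n$ is purely non-abelian and fix a quasitriangular structure $R=\cmorph$ as in \cref{thm:qts}. The first step is a direct computation of the monodromy element $Q_R=R_{21}R$ from \eqref{eq:R-def} (equivalently, of the entries $S^R_{(s,\chi),(t,\psi)}$ using \cref{def:S-matrix} and the action formulas for $\Rep(\D(G))$), carried out far enough to characterize transparency of a simple $X=(s,\chi)$. I expect this to come out as the conjunction of: $s$ lying in a subgroup of $Z(G)$ determined by $\image(u-v)$, $\image p$ (through the bicharacter $\sigma$ of \cref{lem:p-props}) and $\ker r$; the central character $\lambda_\chi$ of \cref{def:lambda} being trivial on the corresponding dual subgroup; and a compatibility condition coupling $\chi$ to $s$ via $r$ and $p$. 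The restriction of the first condition to $Z(G)$ is itself a consequence of conditions (4)--(5): normality of $u,v$ forces the off-diagonal blocks into the $G_j$-centres, while the fact that on each $G_i$ one of $u_{i,i},v_{i,i}$ is a central automorphism — hence surjective, with only a central discrepancy — rules out transparency in any non-central direction.

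The heart of the proof is then to show this transparency locus collapses to the unit when $G$ is purely non-abelian, for which the key tool is \cref{lem:Adney}: for each $i$ the map $\phi\mapsto\phi-1$ is a bijection $\Aut_c(G_i)\to\Hom(G_i,Z(G_i))$, with inverse $z\mapsto 1+z$. From this one reads off that the endomorphism of $Z(G)$ controlling the locus — assembled from the diagonal blocks of $u-v$ together with $p$ via $\sigma$ — admits no transparent direction other than the trivial one, so $s=1$ and then $\chi=\varepsilon$; hence the M\"uger centre is trivial and the category is modular (and, as noted above, factorizable). \emph{The main obstacle} is two-fold: first, the bookkeeping in the $Q_R$ (equivalently $S^R$) computation needed to pin down exactly which combination of $u,v,p,r$ governs transparency; and second, closing the argument using precisely the \emph{disjunction} in condition (4) rather than the (generally false) assertion that both $u$ and $v$ are automorphisms of $G$. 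I anticipate the genuinely delicate point to be the behaviour on $Z(G)$: even with $u_{i,i}$ a central automorphism, its restriction to $Z(G_i)$ need not on its own absorb the contributions of $v_{i,i}$ and $p$, so one must exploit both the normality of $u,v$ and the way the transparency equations weave the central coordinates together through $r$ and the nondegenerate bicharacter $\sigma$ of \cref{lem:p-props} to eliminate the remaining candidate transparent simples.
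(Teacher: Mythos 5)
Your direction (2)$\Rightarrow$(1) is correct and is essentially the paper's argument made explicit: take the quasitriangular structure that is trivial on the abelian factor $G_0$ and standard on the purely non-abelian part, observe the category splits as a Deligne product with a symmetric factor $\Rep(\D(G_0))$ whose $S$-matrix is the all-ones matrix of size $|G_0|^2>1$, and conclude $S^R$ is singular. That checks out against \cref{thm:qts} (your choice of components satisfies condition (4)(a) on each non-abelian factor, and the $G_0$-block is unconstrained).

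The direction (1)$\Rightarrow$(2), however, is not proved; it is only a plan, and the plan leaves exactly the hard content open. You propose to compute the monodromy $R_{21}R$ for a general $R=\cmorph$, characterize the transparent simples $(s,\chi)$, and then show this locus collapses to the unit using \cref{lem:Adney}. But the characterization itself is only conjectured (``I expect this to come out as\dots''), and the collapse argument is asserted rather than carried out, with the genuinely delicate points (the interaction of $u-v$, $p$, $\sigma$, and $r$ on $Z(G)$, and using the disjunction in condition (4) correctly) explicitly flagged as unresolved. Nothing in the proposal actually rules out a nontrivial transparent object for an arbitrary $R$, so as written the forward implication has a genuine gap. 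The paper does not redo this computation at all: it invokes the previously established result \citep[Corollary 8.6]{K3:QTS} that \emph{every} quasitriangular structure of $\D(G)$ is factorizable when $G$ is purely non-abelian, and then applies Takeuchi's theorem that a semisimple factorizable Hopf algebra has modular representation category. If you do not want to reprove that corollary (which is where all the bookkeeping you anticipate actually lives), the clean fix is to cite it, after which your reduction ``semisimple $+$ factorizable $\Rightarrow$ modular'' finishes the argument exactly as in the paper.
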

\begin{proof}
  Every abelian group admits the trivial quasitriangular structure $1\ot 1$, which yields a symmetric category. From this it follows that if $G$ is not purely non-abelian, then there exists a quasitriangular structure which does not yield a modular category.  For the other direction, if $G$ is purely non-abelian then by \citep[Corollary 8.6]{K3:QTS} every quasitriangular structure is factorizable.  Since $\D(G)$ is also semisimple, the desired equivalence then follows from a result of \citet{Takeuchi:ModHopf}, which says that $\Rep(H)$ is modular whenever $H$ is semisimple and factorizable.
\end{proof}

\section{Braid Gauging}\label{sec:gauge}
Given a (braided) fusion category $\C$, there is a universal grading group $\U(\C)$ \citep[Section 3.2]{GN:Nilpotent} and a corresponding degree map $\deg$ on objects.  When the category is modular the universal grading group is abelian.

\begin{example}
  For $G$ a finite group and $\C=\Rep(\D(G))$ we have $\U(C) = G/G'\times \widehat{Z(G)}$, where $\deg(g,\chi) = (gG', \lambda_\chi)$.
\end{example}

\begin{df}[{\citep[Section 4.3]{Nik:braidings}}]\label{def:gauge}
A braid gauging of the braided fusion category $\C$ is a bilinear form $b\colon \U(\C)\times\U(\C)\to\k^\times$.  Given any braiding $c$ of $\C$, we obtain a new braiding $c^b$ by defining
\[ c^b_{X,Y} = b(\deg(X),\deg(Y)) c_{X,Y}\]
for all pairs of simple objects $X,Y$.
\end{df}
Observe that $(c^b_{X,Y})\inv = b(\deg(Y),\deg(X))\inv c_{X,Y}\inv$.  The braid gaugings form a group under convolution products, which is canonically isomorphic to $\BCh{\U(\C)}$.

We adopt the convention to write $b(\deg(X),\deg(Y))$ as simply $b(X,Y)$ whenever $X,Y$ are homogeneous objects under the grading.  Since the tensor product of homogenous objects is again homogeneous, we may also write $b(X,Y)b(X,Z)=b(X,Y\ot Z)$ and $b(X,Z)b(Y,Z)=b(X\ot Y,Z)$.  Of necessity, we additionally have $\deg(X^*)=\deg(X)\inv$ for any homogenous object $X$.

\begin{df}
  Let $(\C,c)$ be a modular fusion category with braiding $c$.

  We say a braid gauging $b$ is modular if $(\C,c^b)$ is modular.

  We say a subgroup $H$ of braid gaugings of $\C$ is modular if $(\C,c^b)$ is modular for all $b\in H$.
\end{df}

Our next goal is to realize braid gaugings in a Hopf algebraic context, in order to express a number of results from \citep{K3:QTS} in categorical language. We first recall the following definition, which is a special case of one appearing in \citep{KS14}.
\begin{df}
  Given Hopf algebras $H,K$, define $\Hom_{Z,coZ}(H,K)$ to consist of those Hopf algebra morphisms $f\colon H\to K$ such that
  \begin{enumerate}
    \item $\Img(f)\subseteq Z(K)$, the center of $K$ as an algebra;
    \item $f(h\com1)\ot h\com 2 = f(h\com 2)\ot h\com 1$ for all $h\in H$.
  \end{enumerate}
  Such a morphism is said to be bicentral.  The image of any such morphism is necessarily both commutative and cocommutative.

  Moreover, the convolution product gives $\Hom_{Z,coZ}(H,K)$ the structure of an abelian group.
\end{df}
We can then translate the categorical notion of braid gaugings into the theory of quasitriangular Hopf algebras as follows.
\begin{thm}\label{thm:bicental-as-gaugings}
  Given a semisimple quasitriangular Hopf algebra $H$, the braid gaugings of $\C=\Rep(H)$ form a group isomorphic to \[\Hom_{Z,coZ}(H^{*\text{co}},H) = \Hom_{Z,coZ}(H^*,H).\]
\end{thm}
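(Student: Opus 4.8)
The plan is to identify both groups with the bilinear forms on the finite abelian group $\widehat K$, where $K := G(H)\cap Z(H)$ is the group of central grouplike elements of $H$, using along the way the classical passage between ``weak $R$-matrices'' and their Drinfeld maps. As a first reduction, the remark preceding the statement already shows that the braid gaugings of $\C=\Rep(H)$ form a group canonically isomorphic to $\BCh{\U(\C)}$, so it suffices to produce a group isomorphism $\BCh{\U(\C)}\cong\Hom_{Z,coZ}(H^{*\co},H)$. The remaining equality $\Hom_{Z,coZ}(H^{*\co},H)=\Hom_{Z,coZ}(H^{*},H)$ is then immediate: any morphism in either set has cocommutative image, and for such a morphism both the coalgebra-map condition and the coZ axiom are insensitive to replacing $H^{*}$ by $H^{*\co}$.

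Next I would pin down $\U(\C)$. Since $H$ is semisimple, $K=G(H)\cap Z(H)$ is finite abelian, $\k K$ is a central Hopf subalgebra of $H$, and $\k K\cong\k^{\widehat K}$; thus $\Rep(H)$ carries the ``central character'' grading sending a simple object $V$ to the character $\deg V\in\widehat K$ by which $K$ acts on $V$. One then shows, and this is precisely where the quasitriangular hypothesis enters, that this is the universal grading: its identity component contains the adjoint subcategory (every $V\otimes V^{*}$ has trivial central character), and the reverse inclusion, which fails for a general semisimple Hopf algebra, holds here because of the braiding. For $H=\D(G)$ this reproduces the formula $\U(\Rep\D(G))\cong G/G'\times\widehat{Z(G)}$ recorded in \cref{sec:gauge}. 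Consequently $\BCh{\U(\C)}\cong\BCh{\widehat K}$, and since $\widehat K$ is finite abelian a bilinear form on it is the same datum as a homomorphism $\widehat K\to\widehat{\widehat K}=K$; so $\BCh{\U(\C)}\cong\Hom(\widehat K,K)$.

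It remains to compute $\Hom_{Z,coZ}(H^{*\co},H)$ and match it with $\Hom(\widehat K,K)$. Any bicentral morphism has commutative, cocommutative, central image, which over $\BC$ with $H$ semisimple must be $\k L$ for a subgroup $L\le G(H)\cap Z(H)=K$; hence $\Hom_{Z,coZ}(H^{*\co},H)=\Hom_{Z,coZ}(H^{*\co},\k K)$. Dualizing, and using the canonical identifications $(H^{*\co})^{*}\cong H^{\op}$, $(\k K)^{*}\cong\k^{K}\cong\k\widehat K$, and $\k K\cong\k^{\widehat K}$, a Hopf algebra map $f\colon H^{*\co}\to\k K$ is the same datum as a Hopf algebra map $f^{*}\colon\k\widehat K\to H^{\op}$, i.e.\ a group homomorphism $\widehat K\to G(H^{\op})=G(H)$; and the coZ axiom for $f$ is exactly the cocentrality of $f$ onto its image, which dualizes to the statement that $f^{*}$ has central image, i.e.\ that the homomorphism $\widehat K\to G(H)$ factors through $G(H)\cap Z(H)=K$. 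This gives $\Hom_{Z,coZ}(H^{*\co},H)\cong\Hom(\widehat K,K)$, in agreement with the previous paragraph; one then checks that the two identifications are compatible, both reducing to the canonical evaluation pairing $K\times\widehat K\to\k^{\times}$, and that the convolution product on morphisms corresponds to pointwise multiplication of bilinear forms, so that the bijection is an isomorphism of groups.

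The step I expect to be the main obstacle is the bookkeeping in the last paragraph: making rigorous the correspondence between weak $R$-matrices and their Drinfeld maps in the spirit of \citep{Jiao:QTSwSP,Rad:QSHA}, but in the presence of the two extra constraints, and in particular verifying that the coZ axiom corresponds \emph{exactly} to cocentrality, so that it cuts $\Hom(\widehat K,G(H))$ down to precisely $\Hom(\widehat K,K)$ and that the whole correspondence is natural enough to be a group isomorphism. A secondary subtlety is the identification of $\U(\Rep H)$ with the central-character grading group $\widehat K$ in the second paragraph; this is genuinely false for an arbitrary semisimple Hopf algebra (for instance $\U(\Rep(\k^{G}))=\U(\VecG_G)=G$ whereas $\widehat K=G/G'$ when $G$ is non-abelian), so the braiding coming from the quasitriangular structure really must be used.
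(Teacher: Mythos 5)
Your proposal is correct, and it follows the same overall strategy as the paper---both the braid gaugings and $\Hom_{Z,coZ}(H^{*\co},H)$ are reduced to homomorphisms from one canonical finite abelian group to its dual---but the middle is implemented differently. The paper identifies $\Hom_{Z,coZ}(H^{*\co},H)\cong\Hom(\Z(H)^*,\Z(H))$ via the Hopf center/cocenter machinery of \citep{ChirKas:HopfCenter} and then cites \citep[Theorem 3.8]{GN:Nilpotent} in the form $\Z(H)\cong\du{\U(\C)}$; note that this route never needs $\U(\C)$ to be abelian, since Hopf algebra maps $\k\,\U(\C)\to\du{\U(\C)}$ are already exactly $\BCh{\U(\C)}$. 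You instead replace the Hopf center by the group $K=G(H)\cap Z(H)$ of central grouplikes, use finite-dimensionality over $\BC$ (Cartier--Gabriel--Kostant) to see that the commutative, cocommutative, central image of a bicentral morphism is $\k L$ with $L\leq K$, and convert the coZ axiom into centrality of the image of the transpose by explicit dualization; this is a sound and more elementary substitute for the appeal to \citep{ChirKas:HopfCenter}, and your bookkeeping is right (coZ for $f$ dualizes to central image for the transpose, while in the converse direction the coZ condition is automatic because the relevant source $\k\widehat{K}$ is cocommutative). The one soft spot is the identification $\U(\C)\cong\widehat{K}$: you correctly flag that it fails without a braiding and give the right counterexample $\k^G$, but the asserted mechanism (``the reverse inclusion holds because of the braiding'') is not yet an argument. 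The clean way to close it is essentially what the paper cites anyway: by \citep[Theorem 3.8]{GN:Nilpotent} (equivalently, $\Aut_{\ot}(\id_{\Rep(H)})\cong K$) one has $K\cong\widehat{\U(\C)}$ for \emph{every} semisimple $H$, and the braiding is needed only to force $\U(\C)$ abelian, whence $\U(\C)\cong\widehat{K}$ and the universal grading coincides with the central-character grading. So your argument invokes the braiding at a point where the paper's formulation avoids it, but with that reference (or the $\Aut_{\ot}(\id)$ fact plus abelianness of the universal grading group of a braided category) supplied, the proposal is complete, and the final compatibility and convolution-product checks you defer are routine.
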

\begin{proof}
 Let notation and assumptions be as in the statement.

 Consider $\Hom_{Z,coZ}(H^{*\text{co}},H)$.  By definitions we have the canonical identity $\Hom_{Z,coZ}(H^{*\text{co}},H)=\Hom_{Z,coZ}(H^{*},H)$.  By \citep{ChirKas:HopfCenter} there is a unique maximal Hopf subalgebra $\Z(H)$ in $Z(H)$.  Also by \citep{ChirKas:HopfCenter} there is a unique (up to isomorphism) maximal cocommutative Hopf algebra quotient $CoZ(H^*)$ of $H^*$.  Of necessity $CoZ(H^*)\cong \Z(H)^*$.  Therefore there is a group isomorphism
 \[ \Hom_{Z,coZ}(H^{*\text{co}},H) \cong \Hom(\Z(H)^*,\Z(H)).\]
 By \citep[Theorem 3.8]{GN:Nilpotent} $\Z(H)\cong \du{\U(\C)}$, from which the result now follows.
\end{proof}

\begin{example}\label{ex:weak}
  The Hopf algebra morphisms $\Hom(H^{*\text{co}},H)$ are known as weak $R$-matrices (of $H$) \citep{Jiao:QTSwSP}, and every quasitriangular structure of $H$ can be expressed as a weak $R$-matrix (but not conversely in general) \citep{Rad:QSHA}. This was the basis for the approach taken in \citep{K3:QTS} to classify the quasitriangular structures of $\D(G)$.

  The braid gaugings of $\Rep(H)$ are therefore precisely the bicentral weak $R$-matrices of $H$ (up to isomorphism).
\end{example}

We can characterize the bicentral weak $R$-matrices as follows.
\begin{thm}\label{thm:bicentral}
  Let $H$ be a finite dimensional Hopf algebra, and $\phi\in\Hom(H^{*\text{co}},H)$ a weak $R$-matrix.  Then $\phi$ is bicentral if and only if $\phi$, as an element of $H\ot H$, is in the center of $H\ot H$.
\end{thm}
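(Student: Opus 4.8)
The plan is to unwind the dictionary between an element $\phi \in H \otimes H$ and the Hopf algebra morphism it induces from $H^{*\mathrm{co}}$ to $H$, and then to match up the two conditions defining bicentrality with the single condition of lying in $Z(H \otimes H)$. Recall that a weak $R$-matrix $\phi$, viewed as a linear map $H^* \to H$, sends $f \mapsto (f \otimes \id)(\phi) = \sum f(\phi^{(1)}) \phi^{(2)}$ in Sweedler-type notation $\phi = \sum \phi^{(1)} \otimes \phi^{(2)}$. First I would record the two ingredients separately: (a) the condition $\Img(\phi) \subseteq Z(H)$ should translate into the statement that $\phi$ commutes with $1 \otimes h$ for every $h \in H$ (equivalently $\sum \phi^{(1)} \otimes \phi^{(2)} h = \sum \phi^{(1)} \otimes h \phi^{(2)}$), since $\Img(\phi)$ is spanned by the second-leg components $\phi^{(2)}$ as $f$ ranges over $H^*$; and (b) the cocentrality condition $\phi(h_{(1)}) \otimes h_{(2)} = \phi(h_{(2)}) \otimes h_{(1)}$ for $h \in H^{*\mathrm{co}}$, which by dualizing the coproduct of $H^{*\mathrm{co}}$ (whose multiplication dualizes the comultiplication of $H$) should translate into the statement that $\phi$ commutes with $h \otimes 1$ for every $h \in H$.

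For step (a), the key point is that a subspace $W \subseteq H$ lies in $Z(H)$ iff $w h = h w$ for all $w \in W$, $h \in H$; applying this to $W = \Img(\phi) = \mathrm{span}\{\phi^{(2)}\}$ and using that the first legs $\phi^{(1)}$ can be separated by elements of $H^*$ (here finite-dimensionality of $H$ is used, so the natural pairing is perfect and we may pick a dual basis), we get $\Img(\phi) \subseteq Z(H) \iff (1 \otimes h)\phi = \phi(1 \otimes h)$ for all $h$. For step (b), I would write the cocommutativity-type identity in terms of $\phi$ as an element of $H \otimes H$: the comultiplication of $H^{*\mathrm{co}}$ is $\tau$ applied to the dual of multiplication in $H$ — more precisely, since $H^{*\mathrm{co}}$ has the opposite coproduct to $H^*$, and the coproduct of $H^*$ is dual to the product of $H$ — the condition $\phi(h_{(1)}) \otimes h_{(2)} = \phi(h_{(2)}) \otimes h_{(1)}$ for all $h\in H^{*\mathrm{co}}$, fed through the identification $\phi(f) = (f\otimes\id)(\phi)$, becomes precisely $(h \otimes 1)\phi = \phi(h \otimes 1)$ for all $h \in H$, after pairing with an arbitrary $f \in H^*$ in the remaining slot and using the perfectness of the pairing again. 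Combining (a) and (b): $\phi$ is bicentral iff $\phi$ commutes with both $h \otimes 1$ and $1 \otimes h$ for all $h$, iff $\phi$ commutes with all of $H \otimes H$ (since these generate $H\otimes H$ as an algebra), i.e. $\phi \in Z(H \otimes H)$.

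The main obstacle I anticipate is getting the coalgebra/algebra bookkeeping for condition (b) exactly right: one must be careful about (i) the "$\mathrm{co}$" decoration, which flips the coproduct, hence the order of the two tensor legs in the relevant identity, and (ii) which leg of $\phi$ is being "evaluated" by $f \in H^*$ and which is being left free — a transpose error here would swap left-multiplication and right-multiplication, or swap the two tensor factors, and the statement would come out garbled. I would handle this by fixing conventions once and for all at the start (writing $\phi(f) = \sum f(\phi^{(1)})\phi^{(2)}$, recalling explicitly that the product on $H^{*\mathrm{co}} = (H^*)^{\mathrm{cop}}$-as-an-algebra is still just the convolution-dual-of-$\Delta_H$ but its coproduct is $\tau \circ \Delta_{H^*}$), and then both translations are a short, honest computation pairing against a generic functional and invoking that $H$ is finite-dimensional so $H^* \otimes H^* \cong (H \otimes H)^*$ separates points. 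Everything else is routine: that a subalgebra of $H$ is central iff it commutes with everything, and that $H \otimes 1$ together with $1 \otimes H$ generate $H \otimes H$.
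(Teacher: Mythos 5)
Your proposal is correct and takes essentially the same approach as the paper: identify $\phi$ with an element of $H\otimes H$ via $\phi(f)=\sum f(\phi^{(1)})\phi^{(2)}$ and match the two bicentrality conditions with commutation against $1\otimes h$ and $h\otimes 1$ respectively, which together generate $H\otimes H$. The only cosmetic difference is in the ``only if'' direction, where the paper argues via the multiplicativity and injectivity of Radford's map $F\colon H\otimes H\to\Hom_{\k}(H^*,H)$ instead of reversing the two pairing computations as you do.
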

\begin{proof}
  We let $F\colon H\ot H\to \Hom_{\k}(H^*,H)$ be the injective linear map defined by $F(\sum X^{(1)}\ot X^{(2)})(f) = \sum f(X^{(1)}) X^{(2)}$ as in \citep{Rad:QSHA}. As observed in \citep{Rad:QSHA}, when $H$ is finite dimensional then $\Hom(H^{*\text{co}},H)$ is contained in the image of $F$.  So given $\phi\in\Hom(H^{*\text{co}},H)$, we let $R\in H\ot H$ we such that $F(R)=\phi$.  Given any basis $\mathcal{B}$ of $H$ we can write $R=\sum_{b\in\mathcal{B}} b\ot \phi(b^*)$, where $b^*\in H^*$ is the element dual to $b$.

  Suppose first that $\phi=F(R)$ is bicentral.  Then for any $h\ot k\in H\ot H$ we have
  \[ F(R)*F(h\ot k) = F(R(h\ot k)) = F((h\ot k)R) = F(h\ot k)*F(R).\]
  Since $F$ is injective this implies $R(h\ot k)=(h\ot k)R$.  Since $h\ot k\in H\ot H$ was arbitrary, we conclude that $R$ is in the center of $H\ot H$.

  Conversely, suppose that $R\in Z(H\ot H)$.  For any $f\in H^*$ and $k\in H$, from $R(1\ot k)=(1\ot k)R$ we have
  \[ (f\ot \id)(R(1\ot k)) = \phi(f)k = k\phi(f) = (f\ot \id)((k\ot 1)R).\]
  Similarly, from $R(k\ot 1)=(k\ot 1)R$ we have
  \begin{align*}
    (f\ot\id)(R(k\ot 1)) = f\com2(k)\phi(f\com1) = f\com1(k)\phi(f\com2).
  \end{align*}
  That this is true for all $k\in K$ is equivalent to
  \[ \phi(f\com1)\ot f\com2 = \phi(f\com2)\ot f\com1.\]
  Since $f$ was arbitrary, $\phi$ is bicentral by definition.
\end{proof}

We recall the following classification of certain weak $R$-matrices of $\D(G)$.
\begin{thm}[{\citep[Theorem 7.2]{K3:QTS}}]\label{thm:cwr}
    The weak $R$-matrices of $\D(G)$ that commute with the image of the comultiplication of $\D(G)$ are in bijective correspondence with the quadruples $(w,r,p,z)$ given by:
    \begin{enumerate}
      \item $w,z\in\Hom(G,Z(G))$;
      \item $r\in\BCh{G}$ is a bicharacter;
      \item $p\in\Hom(\widehat{Z(G)},Z(G))$.
    \end{enumerate}
    The morphism $\D(G)^{*\text{co}}\to\D(G)$ is given by
    \begin{equation}\label{eq:cwr-morph}
      e_g\ot x \mapsto \sum_{t\in G} w^*(e_{gt\inv})r(x)\# p(e_t) z(x\inv).
    \end{equation}
    The corresponding element of $\D(G)\ot\D(G)$ may be written as
    \begin{equation}\label{eq:cwr}
        (w,r,p,z)=\sum_{s,a,b\in G} e_s\# a w(b) \ot r(s)e_b\# p(e_a) z(s\inv).
    \end{equation}
\end{thm}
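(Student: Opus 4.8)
The plan is to move to the Hopf-algebraic side and argue structurally. Via the map $F$ from the proof of \cref{thm:bicentral} (Radford's correspondence), a weak $R$-matrix of $\D(G)$ is the same datum as a Hopf algebra morphism $\phi\colon\D(G)^{*\text{co}}\to\D(G)$, and the corresponding $R\in\D(G)\ot\D(G)$ commutes with $\image(\Delta_{\D(G)})$ exactly when $R\,\Delta(y)=\Delta(y)\,R$ for all $y\in\D(G)$. The first step is to rephrase this identity as a property of $\phi$: contracting $R\,\Delta(y)=\Delta(y)\,R$ against functionals in the left tensor slot turns it into a Yetter--Drinfeld-type intertwining identity relating $\phi$ to the left and right regular actions of $\D(G)$ on $\D(G)^*$. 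From this identity I would extract, as a first consequence, that $\image(\phi)$ lies in the commutative sub-Hopf-algebra $C\subseteq\D(G)$ spanned by the elements $f\# z$ with $f\in\du G$ and $z\in Z(G)$; since conjugation by $Z(G)$ is trivial one has $C\cong\du G\ot\k Z(G)$, and $\phi$ factors through $C$.

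Next I would decompose $\phi$ using the structure of $\D(G)^{*\text{co}}$. As an algebra $\D(G)^{*\text{co}}\cong\du G\ot\k G$; here $\du G\ot 1$ is a sub-Hopf-algebra and $1\ot\k G\cong\k G$ is a subalgebra (group-like exactly on $1\ot\k Z(G)$), and the two together generate $\D(G)^{*\text{co}}$, so $\phi$ is determined by its restriction $\phi_0:=\phi|_{\du G}\colon\du G\to C$ together with the group homomorphism $\beta\colon G\to C^{\times}$, $x\mapsto\phi(1\ot x)$, subject to the compatibilities forced by the coproduct and cross relations of $\D(G)^{*\text{co}}$. Since $\k Z(G)\cong\du{\widehat{Z(G)}}$ (Pontryagin duality), $C\cong\du{(G\times\widehat{Z(G)})}$, so by the antiequivalence between finite groups and their function Hopf algebras a Hopf morphism $\phi_0\colon\du G\to C$ corresponds to a group homomorphism $G\times\widehat{Z(G)}\to G$, i.e.\ a pair $(w',p')\in\Hom(G,G)\times\Hom(\widehat{Z(G)},G)$; the intertwining identity forces both to be $Z(G)$-valued, giving $w\in\Hom(G,Z(G))$ and $p\in\Hom(\widehat{Z(G)},Z(G))$ (with $p$ precisely as in \cref{lem:p-props}). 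Likewise the identity forces $\beta$ to take group-like values in $C$, and since $G(C)\cong\widehat G\times Z(G)$ this means $\beta(x)=r(x)\# z(x\inv)$ for a bicharacter $r\in\BCh G$ and a homomorphism $z\in\Hom(G,Z(G))$. Evaluating $\phi$ on a general basis element via the factorization $e_g\ot x=(e_g\ot 1)(1\ot x)$, so that $\phi(e_g\ot x)=\phi_0(e_g)\,\beta(x)$, and simplifying inside $\D(G)$ reproduces \eqref{eq:cwr-morph}; assembling $R=\sum_b b\ot\phi(b^*)$ over the standard basis $\{e_s\#y\}$ of $\D(G)$ gives \eqref{eq:cwr}.

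For the converse and bijectivity, I would take an arbitrary quadruple $(w,r,p,z)$ with the stated properties, define $\phi$ by \eqref{eq:cwr-morph}, and check directly that it is a Hopf morphism: multiplicativity, comultiplicativity, and compatibility with the antipode all reduce to the facts that $w,r,p,z$ are morphisms, $r$ is a bicharacter, and $\image(w),\image(p),\image(z)\subseteq Z(G)$ — centrality lets the conjugations occurring in the coproduct and antipode of $\D(G)$ pass through harmlessly. The same centrality computation shows the associated $R$ commutes with $\image(\Delta_{\D(G)})$. Injectivity of $(w,r,p,z)\mapsto\phi$ follows by recovering $p$ from $\phi_0$ via \cref{lem:p-props}, $r$ and $z$ from $\beta$, and $w$ from the remaining part of $\phi_0$.

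The step I expect to be the main obstacle is the passage from "$R$ commutes with $\image(\Delta_{\D(G)})$" to this exact four-parameter normal form: showing the intertwining identity forces $\image(\phi)\subseteq C$, forces $\beta$ to be group-like-valued, forces all four component morphisms to be $Z(G)$-valued, and introduces no fifth independent datum. The delicate point is that $w$ (controlling $\phi_0$ along the $\du G$-direction) and $z$ (the central part of $\beta$) play formally parallel roles yet enter $R$ through different tensor slots of \eqref{eq:cwr}; keeping them separate while verifying the coproduct compatibilities of $\D(G)^{*\text{co}}$ is where the real computation lies. This also clarifies the kinship with \cref{thm:qts}: both results cut out sub-collections of the weak $R$-matrices of $\D(G)$ by constraining the pair of endomorphisms — quasitriangularity imposes the dichotomy and normality conditions of \cref{thm:qts}, while the present "central" condition instead requires both endomorphisms to take values in $Z(G)$.
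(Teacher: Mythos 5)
The paper does not actually prove \cref{thm:cwr}: it is quoted from \citep[Theorem 7.2]{K3:QTS}, so there is no in-paper argument to match and your attempt has to be judged against what a complete proof requires. Your skeleton is the right one and is consistent with the machinery that reference builds on: Radford's correspondence $F$, the decomposition of a morphism $\phi\colon\D(G)^{*\co}\to\D(G)$ through its restrictions to the $\du{G}$-part and the $\k G$-part, the identification of the relevant Hopf morphisms into $\k Z(G)$ with $\Hom(\widehat{Z(G)},Z(G))$ as in \cref{lem:p-props}, the group-like analysis giving $r$ and $z$, and a routine converse verification. The kinship you point out with \cref{thm:qts} (same quadruple shape, with centrality of the two endomorphisms replacing the chirality/normality conditions) is also correct.

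The genuine gap is that every step that actually constitutes the theorem is asserted rather than carried out, and you flag this yourself. Concretely: (a) that commutation of $R$ with $\image(\Delta_{\D(G)})$ forces $\image(\phi)\subseteq C=\du{G}\#\,\k Z(G)$ is not a formal consequence of merely writing the intertwining identity; it needs the identity evaluated both on the group-likes $\Delta(\varepsilon\# x)$ and on the elements $\Delta(e_h\#1)$, combined with the Hopf-morphism property of $\phi$, and this is precisely the computation that carries the theorem. (b) That $\beta(x)=\phi(1\ot x)$ is group-like-valued does not follow from $\phi$ being a Hopf morphism, since $1\ot x$ is group-like in $\D(G)^{*\co}$ only for $x\in Z(G)$; again the commutation hypothesis must be deployed explicitly. (c) The ``no fifth datum'' claim --- that the cross-relations between $\phi_0$ and $\beta$ imposed by the coproduct of $\D(G)^{*\co}$, whose mixed terms involve the conjugation action of $G$, are exactly absorbed by $w,p,z$ having central image and $r$ being a bicharacter --- is the heart of the bijectivity statement and is left unverified; this is also where one must check that $w$ and $z$ really remain independent parameters rather than collapsing. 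Finally, even the converse direction is not free: multiplicativity of the map \eqref{eq:cwr-morph} on the orthogonal-idempotent part of $\D(G)^{*\co}$ already requires the orthogonality relations supplied by \cref{lem:p-props}. As it stands the proposal is a sound plan whose decisive computations remain to be done.
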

We identify such quadruples with both the morphism and element of $\D(G)\ot\D(G)$ whenever convenient.

\begin{cor}\label{cor:comm-to-bi}
  A weak $R$-matrix of $\D(G)$ is bicentral if and only if, as an element of $\D(G)\ot\D(G)$, it commutes with the image of the comultiplication.
\end{cor}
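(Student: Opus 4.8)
The plan is to derive this from \cref{thm:bicentral} and the classification in \cref{thm:cwr}. By \cref{thm:bicentral}, a weak $R$-matrix of $\D(G)$ is bicentral precisely when it lies in the center of $\D(G)\ot\D(G)$, so it suffices to prove that, for a weak $R$-matrix of $\D(G)$, being central in $\D(G)\ot\D(G)$ is equivalent to commuting with $\Delta(\D(G))$. One direction requires nothing: since $\Delta(\D(G))\subseteq\D(G)\ot\D(G)$, any central element of $\D(G)\ot\D(G)$ commutes with every element of $\Delta(\D(G))$.

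For the converse, suppose $R$ is a weak $R$-matrix of $\D(G)$ commuting with the image of the comultiplication. By \cref{thm:cwr}, $R=(w,r,p,z)$ for some quadruple as in that theorem, so $R$ is the explicit element of $\D(G)\ot\D(G)$ written in \eqref{eq:cwr}. Since $\D(G)$ is generated as an algebra by the elements $e_g\#1$ and $\varepsilon\# x$ for $g,x\in G$, the algebra $\D(G)\ot\D(G)$ is generated by the four families $(e_g\#1)\ot(\varepsilon\#1)$, $(\varepsilon\#1)\ot(e_g\#1)$, $(\varepsilon\# x)\ot(\varepsilon\#1)$, and $(\varepsilon\#1)\ot(\varepsilon\# x)$; hence it suffices to check that $R$ commutes with each of these, after which $R\in Z(\D(G)\ot\D(G))$ and \cref{thm:bicentral} gives that $R$ is bicentral.

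These four checks amount to a direct computation with the semidirect-product multiplication of $\D(G)$, and the relevant input is exactly the list of properties furnished by \cref{thm:cwr}: $p(e_a)$ vanishes unless $a\in A\subseteq Z(G)$, so only central group elements $a$ contribute to \eqref{eq:cwr}; $w$ and $z$ take values in $Z(G)$; and $r$ is a bicharacter, hence conjugation-invariant in each slot, since $r(x\inv sx)=r(s)$ and $r(s)(x\inv gx)=r(s)(g)$ for all $g,s,x$. For instance, commuting with $(e_g\#1)\ot(\varepsilon\#1)$ comes down to the fact that $aga\inv=g$ once $a$ is central, while commuting with $(\varepsilon\# x)\ot(\varepsilon\#1)$ follows by conjugating the summation indices $s$ and $a$ by $x$ and using that $w(b)$, $z(s\inv)$, and every group element in the support of $p(e_a)$ are central, together with the conjugation-invariance of $r$; the remaining two families are entirely analogous.

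The only real work is this last, purely computational, step: carrying out the four commutator calculations in $\D(G)\ot\D(G)$ while correctly handling the co-opposite coalgebra structure built into $\D(G)$ and invoking the centrality and bicharacter properties of $w,z,p,r$. There is no conceptual obstacle once \eqref{eq:cwr} and \cref{thm:bicentral} are in hand.
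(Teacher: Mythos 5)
Your proposal is correct and follows essentially the same route as the paper: combine \cref{thm:bicentral} with the classification of \cref{thm:cwr}, the forward direction being immediate and the converse being a routine verification that the explicit element \eqref{eq:cwr} is central (equivalently, that the morphism \eqref{eq:cwr-morph} is bicentral), using the centrality of the images of $w,z,p$ and the fact that $r$ lands in $\widehat{G}$. The paper simply leaves this verification to the reader, while you sketch it explicitly; the key facts you invoke are the right ones.
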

\begin{proof}
  This follows easily from \cref{thm:cwr,eq:cwr-morph,thm:bicentral}.
\end{proof}

%
%
\section{Braid gaugings and link invariants}\label{sec:links}
Every framed, oriented link can be written as the closure of a string diagram, which in turn allows us to define link invariants in a braided fusion category $\C$ by taking pivotal traces of the corresponding string diagram.  If we use a braid gauging $b$ to change the braiding $c$ as in \cref{def:gauge}, it immediately follows that a link invariant associated to the new braiding is obtained from the old invariant by a multiplicative factor determined by the braid gauging and the simple objects in question.  This is a simple process which we will make clear by demonstrating with a few examples.  Applications of such identities arise when multiple invariants are related to each other, but in general transform under braid gaugings in distinct ways. This can then be used to establish a number of dependency relations among the invariants.  This will be detailed with examples in \cref{sec:verlinde,sec:indicators}.

\subsection{The modular data}\label{sec:modular}
\begin{thm}\label{thm:mod-gauge}
    Let $\C$ be a braided fusion category with braiding $c$.  Fix a braid gauging $b$.  Let $T,S$ be the modular data obtained from $c$, and let $T^b,S^b$ be the modular data obtained from $c^b$.  Then we have
    \begin{gather}
      T^b_X = b(X,X)\inv T_X,\\
      S^b_{X,Y} = b(X,Y) b(Y,X) S_{X,Y}
    \end{gather}
    for all simple objects $X,Y$.
\end{thm}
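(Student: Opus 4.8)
The plan is to observe that both pieces of the modular data are, by definition, pivotal traces of string diagrams built from the braiding together with (co)evaluation morphisms and identity strands, and that passing from $c$ to $c^b$ only rescales each braiding crossing by a scalar depending on the degrees of the two strands meeting at that crossing. Since the pivotal trace (and the ordinary categorical trace) is $\k$-linear in the morphism it is applied to, while the (co)evaluations and identity strands are untouched by the gauging, the effect on each invariant is just multiplication by the product of the scalars attached to its crossings. Both claimed formulas then follow by bookkeeping.

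For the $T$-matrix: by \cref{def:T-def}, $T_X$ is the pivotal trace of a diagram on $X$ containing a single crossing, and by the stated convention that crossing is the inverse braiding $c_{X,X}\inv$ — note that both strands meeting at this crossing carry the object $X$. Replacing $c$ by $c^b$ replaces this crossing by $(c^b_{X,X})\inv$, which by the observation following \cref{def:gauge} equals $b(\deg(X),\deg(X))\inv c_{X,X}\inv = b(X,X)\inv c_{X,X}\inv$. Nothing else in the diagram changes, so by linearity of the pivotal trace $T^b_X = b(X,X)\inv T_X$.

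For the $S$-matrix: by \cref{def:S-matrix}, $S_{X,Y}$ is the trace of the double-braiding $c_{X_j,X_i}\circ c_{X_i,X_j}$, i.e. of $c_{Y,X}\circ c_{X,Y}\colon X\ot Y\to X\ot Y$. Under the gauging we have $c^b_{X,Y} = b(\deg(X),\deg(Y)) c_{X,Y} = b(X,Y) c_{X,Y}$ and likewise $c^b_{Y,X} = b(Y,X) c_{Y,X}$, so the new double-braiding is $c^b_{Y,X}\circ c^b_{X,Y} = b(X,Y) b(Y,X)\,(c_{Y,X}\circ c_{X,Y})$. Taking traces gives $S^b_{X,Y} = b(X,Y) b(Y,X) S_{X,Y}$, as claimed.

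The only thing requiring care — and the closest thing to an obstacle — is tracking the gauging scalars and their inverses consistently: one must use that a crossing which is an inverse braiding $(c^b_{X,Y})\inv$ contributes the factor $b(\deg(Y),\deg(X))\inv$ rather than $b(\deg(X),\deg(Y))$ or its inverse, and that the single crossing appearing in the $T$-diagram has both legs equal to $X$, so that the two degree arguments coincide and the factor collapses to $b(X,X)\inv$. Once this convention is pinned down, both identities are immediate.
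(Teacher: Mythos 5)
Your argument is correct and matches the paper's own proof: both identify the single inverse braiding $c_{X,X}\inv$ in the $T$-diagram and the two braidings $c_{X,Y}$, $c_{Y,X}$ in the $S$-diagram, then apply the gauging scalars (using $(c^b_{X,X})\inv = b(X,X)\inv c_{X,X}\inv$) and linearity of the trace. The extra care you take with the inverse-braiding convention is exactly the point the paper relies on implicitly, so there is nothing to add.
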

\begin{proof}
  The diagram for $T_X$ has a single inverse braiding $c_{X,X}\inv$, and since $(c^b_{X,X})\inv = b(X,X)\inv c_{X,X}\inv$ the desired formula for $T^b$ follows.

  Similarly, the diagram for $S_{X,Y}$ has two braidings, $c_{X,Y}$ and $c_{Y,X}$.  Since $c^b_{X,Y} = b(X,Y)c_{X,Y}$ by definition, the desired formula for $S^b$ follows.
\end{proof}
The modularity property is not, in general, invariant under braid gauging. Therefore applying a braid gauging may result in an inequivalent braided tensor category.  Indeed, when $G$ is abelian every quasitriangular structure of $\D(G)$ is also a bicentral weak $R$-matrix, and so every braiding can be gauged to the trivial (symmetric) braiding, or alternatively be gauged to the standard braiding given by $R_0$. \Cref{thm:is-modular} shows that every braiding of $\Rep(\D(G))$ is modular when $G$ is purely non-abelian, however.  In \cref{cor:one-structure} we will show that when $G$ is purely non-abelian then not only does braid gauging leave the modular category $\Rep(\D(G))$ invariant, but that the pivotal category $\Rep(\D(G))$ admits a unique structure as a braided tensor category.

\subsection{The Borromean Rings}
The Borromean rings correspond to the closure of the following braid.
\begin{center}
\begin{tikzpicture}
  \braid[number of strands=3] (braid) a_1 a_2^{-1} a_1 a_2^{-1} a_1 a_2^{-1};
\end{tikzpicture}
\end{center}
In a braided fusion category $\C$ we let $B_{X,Y,Z}$ be the trace of this diagram with the strings labeled left-to-right by the simple objects $X,Y,Z$.  Collectively we refer to this as the $B$-tensor, denoted by $B$. \Citet{KMS:invariant} used the $B$-tensor to show that modular categories are not uniquely determined by their modular data, a fact which was first established in \citep{PM}.

\begin{thm}\label{thm:B-transform}
  Let $\C$ be a braided fusion category with braiding $c$, and let $b$ be a braid gauging.  We let $B^b$ denote the $B$-tensor obtained from the braiding $c^b$ as in \cref{def:gauge}.  Then for all simple objects $X,Y,Z$ we have
  \[ B^b_{X,Y,Z}=B_{X,Y,Z}.\]
  Subsequently, the $B$-tensor is fully invariant under braid gauging.
\end{thm}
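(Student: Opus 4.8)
The plan is to compute the factor by which each crossing in the Borromean braid gets rescaled under $c\mapsto c^b$, and show the product of all these factors is trivial. The braid word is $a_1 a_2^{-1} a_1 a_2^{-1} a_1 a_2^{-1}$ on three strands labeled, at the top, by the simple objects $X,Y,Z$ from left to right. Each positive generator $a_1$ contributes a crossing of the two objects currently in strand-positions $1,2$, and each $a_2^{-1}$ an inverse crossing of the objects in positions $2,3$. By \cref{def:gauge} a crossing $c_{P,Q}$ picks up a factor $b(P,Q)$ while an inverse crossing $c_{P,Q}^{-1}$ picks up $b(Q,P)^{-1}$. So the total scalar relating $B^b_{X,Y,Z}$ to $B_{X,Y,Z}$ is a product of six such factors, and the key point is to track which pair of objects sits in which pair of positions as one reads down the braid.

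The main step is this bookkeeping. Reading the word $a_1 a_2^{-1} a_1 a_2^{-1} a_1 a_2^{-1}$ from top to bottom and recording the objects in positions $(1,2,3)$ after each generator: start $(X,Y,Z)$; after $a_1$ the first two swap, giving $(Y,X,Z)$ and a factor $b(X,Y)$; after $a_2^{-1}$ positions $2,3$ swap, giving $(Y,Z,X)$ and a factor $b(Z,X)^{-1}$; after $a_1$: $(Z,Y,X)$, factor $b(Y,Z)$; after $a_2^{-1}$: $(Z,X,Y)$, factor $b(X,Y)^{-1}$; after $a_1$: $(X,Z,Y)$, factor $b(Z,X)$; after $a_2^{-1}$: $(X,Y,Z)$, factor $b(Y,Z)^{-1}$. (That the underlying permutation returns to the identity is of course the statement that the Borromean braid closes to a pure link on three components.) Multiplying the six factors,
\[
b(X,Y)\,b(Z,X)^{-1}\,b(Y,Z)\,b(X,Y)^{-1}\,b(Z,X)\,b(Y,Z)^{-1} = 1,
\]
since the factors cancel in pairs. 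Hence $B^b_{X,Y,Z}=B_{X,Y,Z}$, and as $X,Y,Z$ were arbitrary simple objects the $B$-tensor is fully invariant under braid gauging.

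I expect no real obstacle here; the only place to be careful is the conventions. First, the crossing convention: the excerpt reads diagrams top-to-bottom and declares the depicted crossing in \cref{def:T-def} to be the \emph{inverse} braiding, so one must be consistent about whether a given \texttt{braids} generator $a_i$ corresponds to $c$ or $c^{-1}$, and correspondingly whether the gauging factor is $b(P,Q)$ or $b(Q,P)^{-1}$ — but since the positive and negative generators alternate and appear equally often with the same pair-orbits, any overall convention swap replaces each factor by the inverse of another factor already in the list, and the product is still $1$. Second, one should note that the pivotal trace is multilinear and the scalars $b(\cdot,\cdot)\in\k^\times$ pull out of the trace freely, which is immediate. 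If one prefers to avoid position-tracking entirely, an alternative is to observe that the Borromean link is a pure $3$-braid closure, so in the product over crossings each \emph{ordered} pair of components appears as a positive crossing exactly as often as it does negatively (here: each of the three pairs appears once $+$ and once $-$), and the alternating signs in the $b$-factors then cancel by the same count — this makes the invariance transparent without writing the braid word out, and I would include it as a remark.
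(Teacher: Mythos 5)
Your proof is correct and follows essentially the same route as the paper: identify the six crossings in the Borromean braid word, attach the gauging factor $b(\cdot,\cdot)^{\pm 1}$ to each, and observe that the factors cancel in pairs (your position-tracking makes explicit the list of braidings the paper simply writes down, and your cancellation $b(X,Y)b(Z,X)^{-1}b(Y,Z)b(X,Y)^{-1}b(Z,X)b(Y,Z)^{-1}=1$ needs no symmetry of $b$). Your remarks on convention-independence and on scalars pulling out of the pivotal trace are fine and do not change the argument.
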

\begin{proof}
  The following are the braidings appearing in the diagram for $B_{X,Y,Z}$:
  \begin{gather*}
    c_{X,Y},\
    c_{X,Z}\inv,\
    c_{Y,Z},\\
    c_{Y,X}\inv,\
    c_{Z,X},\
    c_{Y,Z}\inv.
  \end{gather*}
  From \cref{def:gauge} and the following remarks concerning the inverse braidings, it follows that
  \begin{equation*}
    B^b_{X,Y,Z} = \frac{b(X,Y)b(Y,Z)b(Z,X)}{b(Z,X)b(X,Y)b(Z,Y)} B_{X,Y,Z} = B_{X,Y,Z},
  \end{equation*}
  as desired.
\end{proof}

This stands out as an apparent non-example, in the sense that braid gaugings provide no useful information for the invariants.  However, there is still something that can in principle be gleaned: we are free to pick any braid gauging which makes the calculations particularly easy to perform, and we can even pick different braid gaugings for distinct entries if so desired.

\begin{example}
For $\Rep(\D(A))$ with $A$ an abelian group, we may use the symmetric braiding instead of $R_0$, so that \[B_{X,Y,Z}= \dim(X)\dim(Y)\dim(Z)=1\] for all simple objects $X,Y,Z$, which are all 1-dimensional.
\end{example}

\subsection{The Whitehead Link}
The Whitehead link corresponds to the closure of the following link, where of necessity we must label the first and second strands by the same object.
\begin{center}
\begin{tikzpicture}
  \braid[number of strands=3] (braid) a_2 a_1^{-1} a_2 a_1^{-1} a_2;
\end{tikzpicture}
\end{center}
In a braided fusion category $\C$ and simple objects $A,B$ we define $\widetilde{W}_{A,X}$ be the pivotal trace of the above diagram with $A$ on the first two strands and $X$ on the third.  We then define the $W$-matrix $W$ by \[W_{A,X} = T_X \widetilde{W}_{A,X},\] where $T_X$ is the entry corresponding to $X$ in the $T$-matrix.  The Whitehead link is not symmetric as an oriented framed link, so $\widetilde{W}$ will in general not be symmetric.  The normalization has the benefit of making $W$ symmetric \citep[Proposition 2.2]{BDGRTW:BeyondModular}.  The $W$-matrix has also been used to show that modular categories are not uniquely determined by their modular data in \citep[Theorem 4.1]{BDGRTW:BeyondModular}.

\begin{thm}\label{thm:W-transform}
  Let $\C$ be a braided fusion category with braiding $c$, and let $b$ be a braid gauging.  We let $W^b$ denote the $W$-matrix obtained from the braiding $c^b$ as in \cref{def:gauge}.  Then for all simple objects $A,X$ we have
  \[ W^b_{A,X}=b(A,A)b(X,X)W_{A,X}.\]
\end{thm}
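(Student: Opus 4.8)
The plan is to proceed exactly as in the proofs of \cref{thm:B-transform,thm:mod-gauge}: read off the list of braidings (and inverse braidings) that appear in the string diagram defining $\widetilde{W}_{A,X}$, apply \cref{def:gauge} together with the observation that $(c^b_{X,Y})\inv = b(\deg(Y),\deg(X))\inv c_{X,Y}\inv$, and collect the resulting scalar. First I would write down the braid word $a_2\, a_1^{-1}\, a_2\, a_1^{-1}\, a_2$ with the first two strands labelled $A$ and the third labelled $X$, and note that the $a_2$-crossings (strand $2$ over/under strand $3$) contribute braidings of the form $c_{A,X}^{\pm 1}$ or $c_{X,A}^{\pm 1}$, while the $a_1^{-1}$-crossings are inverse braidings between the first two $A$-strands, hence of the form $c_{A,A}\inv$. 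The $c_{A,A}\inv$ crossings are between homogeneous objects of the \emph{same} degree, so each contributes $b(A,A)\inv$; since there are two such crossings, this gives a factor $b(A,A)^{-2}$ from $\widetilde W$.

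Next I would tally the $A$–$X$ crossings. There are three $a_2$-crossings (two positive, one negative, given the word $a_2\,a_1^{-1}\,a_2\,a_1^{-1}\,a_2$), and I need to track, for each, whether it is $c_{A,X}$, $c_{X,A}$, $c_{A,X}\inv$, or $c_{X,A}\inv$, depending on the strand order at that point in the braid. Using $b(A,X)b(X,A) = b(\deg A\cdot\deg X,\deg X\cdot\deg A)$-type bilinearity and the fact that inverse braidings pick up the inverse scalar with the \emph{arguments reversed}, the positive crossings contribute $b(A,X)$ or $b(X,A)$ and the single negative crossing contributes $b(X,A)\inv$ or $b(A,X)\inv$. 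I expect the net contribution of the three $A$–$X$ crossings to telescope to a single factor $b(A,X)$ or equivalently $b(X,A)$ (they are equal here since, by bilinearity and $\deg(A^*) = \deg(A)\inv$, the relevant symmetric combination collapses). Combining with the $b(A,A)^{-2}$ from the self-crossings, $\widetilde W$ transforms as $\widetilde W^b_{A,X} = b(A,A)^{-2} b(A,X)\, \widetilde W_{A,X}$ (up to possibly rewriting $b(A,X)$ as $b(X,A)$).

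Finally I would assemble the $W$-matrix via $W_{A,X} = T_X \widetilde W_{A,X}$ and apply \cref{thm:mod-gauge}, which gives $T^b_X = b(X,X)\inv T_X$. Multiplying, $W^b_{A,X} = b(X,X)\inv T_X \cdot b(A,A)^{-2} b(A,X)\, \widetilde W_{A,X}$. For this to match the claimed $W^b_{A,X} = b(A,A)b(X,X) W_{A,X}$, the scalar $b(X,X)\inv b(A,A)^{-2} b(A,X)$ must equal $b(A,A)b(X,X)$; reconciling these is really a bookkeeping check that I have correctly identified which of the two $A$-strands carries each crossing and the orientations — this reconciliation, i.e.\ getting the exact pattern of $b(A,X)$ versus $b(X,A)$ versus $b(A,A)$ right from the braid word, is the one genuinely error-prone step. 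The main obstacle is purely combinatorial: correctly labelling the six crossings of the Whitehead braid and their orientations, since the first two strands are forced to carry the same object $A$ and strands get permuted as one reads down the braid word, so a naive count can conflate $c_{A,A}$-type crossings with $c_{A,X}$-type ones. Once the crossing inventory is pinned down, the rest is immediate from \cref{def:gauge}, the inverse-braiding remark, bilinearity of $b$, and \cref{thm:mod-gauge}.
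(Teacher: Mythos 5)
Your overall strategy is the same as the paper's (inventory the crossings of the Whitehead braid, apply \cref{def:gauge} and the remark on inverse braidings, then feed in $T^b_X=b(X,X)\inv T_X$ from \cref{thm:mod-gauge}), but the execution has a genuine error in the strand-tracking, and your own computation ends in a scalar that does not match the statement, with the discrepancy explicitly left unresolved as a ``bookkeeping check.'' Concretely: it is false that the two $a_1^{-1}$ crossings are self-crossings $c_{A,A}\inv$. Reading the word $a_2\,a_1^{-1}\,a_2\,a_1^{-1}\,a_2$ top to bottom with initial labels $(A,A,X)$, after the first $a_2$ the $X$-strand sits in position $2$, so the first $a_1^{-1}$ is an $A$--$X$ inverse crossing; the strand order is then $(X,A,A)$, so the middle $a_2$ is the \emph{only} $A$--$A$ crossing (and it is positive, since all three $a_2$'s in the word are positive generators --- your ``two positive, one negative'' count of the $a_2$-crossings is also off, and the braid has five crossings, not six); the second $a_1^{-1}$ and the final $a_2$ are again $A$--$X$ type. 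The correct inventory is $c_{A,X},\ c_{A,X}\inv,\ c_{A,A},\ c_{X,A}\inv,\ c_{X,A}$: the mixed crossings come in two positive/two negative pairs, so the $b(A,X)$ and $b(X,A)$ factors cancel completely, leaving
\[ \widetilde{W}^b_{A,X} \;=\; \frac{b(A,X)\,b(A,A)\,b(X,A)}{b(X,A)\,b(A,X)}\,\widetilde{W}_{A,X} \;=\; b(A,A)\,\widetilde{W}_{A,X}, \]
with no $b(A,A)^{-2}$ and no surviving $b(A,X)$. Indeed, a leftover mixed factor $b(A,X)$ is incompatible with the form of the claimed identity, which involves only the diagonal values $b(A,A)$ and $b(X,X)$, so the mismatch you flag at the end is not a convention issue to be reconciled later --- it signals that the crossing inventory itself is wrong. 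Once the inventory is corrected, the $b(X,X)^{\pm1}$ factor comes solely from the normalization $W_{A,X}=T_X\widetilde{W}_{A,X}$ via \cref{thm:mod-gauge}, and the argument closes as in the paper.
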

\begin{proof}
  The braidings appearing in the braid diagram are
  \begin{gather*}
    c_{A,X}, \ c_{A,X}\inv, \ c_{A,A},\\
    c_{X,A}\inv, \ c_{X,A}.
  \end{gather*}
  By \cref{def:gauge} and the following remarks on inverse braidings we conclude that
  \[ \widetilde{W}^b_{A,X} = \frac{b(A,X)b(A,A)b(X,A)}{b(X,A)b(A,X)} \widetilde{W}_{A,X} = b(A,A) \widetilde{W}_{A,X}.\]
  The desired identity then follows from the definition of $W_{A,X}$.
\end{proof}

The next two sections of the paper turn to working out how braid gauging affects the category $\Rep(\D(G))$, in order to provide an in-depth example.

%
%
\section{Chirality for \texorpdfstring{$\Rep(\D(G))$}{Rep(D(G))}}\label{sec:chirality}
The description of quasitriangular structures in \cref{thm:qts} demonstrates a binary split in the diagonal entries of $u,v$.  This reflects the choice between the standard braiding or its inverse that can be made for each direct factor.  In this section we introduce some definitions and results designed to make it easier to discuss this feature.  This constitutes an expansion of \citep[Definition 11.1]{K3:QTS}.

\begin{df}\label{def:hand}
  We say that a quasitriangular structure $R=\cmorph$ of $\D(G)$ is left-handed if for all $1\leq i\leq n$ we have $u_{i,i}\in\Aut_c(G_i)$ and $v_{i,i}\in\Hom(G_i,Z(G_i))$.  We say that $R$ is right-handed if $\tau(R\inv)$ is left-handed; equivalently, for all $1\leq i\leq n$ we have $v_{i,i}\in\Aut_c(G_i)$ and $u_{i,i}\in\Hom(G_i,Z(G_i))$.
\end{df}
\begin{df}\label{def:chiral}
  For any quasitriangular structure $R=\cmorph$ let $E\subseteq \{1,...,n\}$ consist of those $i$ such that $u_{i,i}\in\Aut_c(G_i)$.  We call this the chiral set, or simply chirality, of $R$.

  We have subgroups $G_E = G_0\times\prod_{i\in E} G_i, G_{E^c} = \prod_{i\not\in E} G_i$ of $G$.  There is a canonical group isomorphism $\mathcal{I}_E\colon G\to G_0\times G_R$ given by the identity on $G_E$ and inversion on $G_{E^c}$.  We identify $\mathcal{I}_E$ as a set map $G\to G$ whenever convenient.

  We let $\pi_E\colon G\to G$ be the canonical retraction to $G_0\times G_E$, and similarly we let $\pi_{E^c}$ be the canonical retraction to $G_{E^c}$.
\end{df}
In the extreme cases, when $E=\{1,...,n\}$ then $\mathcal{I}_E$ is the identity map of $G$, and when $E=\emptyset$ then $\mathcal{I}_E$ is the identity on $G_0$ and the inversion map on $G_1\times\cdots\times G_n$.  These are the cases for left-handed and right-handed quasitriangular structures, respectively.  In all other cases $E$ depends on the choice of decomposition, as we can always permute the orders of the factors.  However, the set of quasitriangular structures is independent of this choice, and once one decomposition is given we can always pick an ordering of the factors in any other decomposition so that the set $E$ remains invariant.

\begin{df}
  Given $E\subseteq\{1,...,n\}$ we define the standard $E$-chiral quasitriangular structure $R_E = \cmorph[\pi_E][0][0][\pi_{E^c}]$.  The Drinfeld and ribbon element of $R_E$ is $u_E = \sum_{g\in G} e_g\# \mathcal{I}_E(g\inv)$.
\end{df}
Note that this definition again depends on the choice of decomposition whenever $E$ is a proper non-empty subset, but we will only ever work in a single, but otherwise arbitrary, decomposition at a time.   There is also a canonical isomorphism of quasitriangular Hopf algebras $(\D(G),R_E)\cong (\D(G_E),R_0)\ot (\D(G_{E^c}),R_1)$.

We observe that we can always partition the braidings of a braided fusion category $\C$ into the orbits under the action of the braid gaugings. The following shows that the standard $E$-chiral quasitriangular structures of $\D(G)$ give representatives for every such orbit.

\begin{prop}\label{prop:qts-gauge}
  Let $R=\cmorph$ be a quasitriangular structure of $\D(G)$.  Then $R$ is $E$-chiral if and only if $R=(z_u,r,p,z_v)R_E$.

  Moreover, given any bicentral weak $R$-matrix $X\in\D(G)\ot\D(G)$, then $XR$ is a quasitriangular structure with the same chirality as $R$.
\end{prop}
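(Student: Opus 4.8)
The plan is to prove both assertions by direct computation with the explicit formulas already assembled in the paper: \cref{thm:qts} for quasitriangular structures, \cref{thm:cwr} for weak $R$-matrices commuting with $\Delta(\D(G))$, and \cref{cor:comm-to-bi} identifying the latter with the bicentral weak $R$-matrices. The first claim is essentially a repackaging of the conditions in \cref{thm:qts}(iv): being $E$-chiral means precisely that for $i \in E$ we have $u_{i,i} \in \Aut_c(G_i)$, $v_{i,i} \in \Hom(G_i, Z(G_i))$, while for $i \notin E$ the roles are swapped. I would first spell out $R_E = \cmorph[\pi_E][0][0][\pi_{E^c}]$ and observe that its components satisfy these diagonal conditions with $u_{i,i} = \id_{G_i}$ (an element of $\Aut_c(G_i)$) for $i \in E$, $u_{i,i} = 0 \in \Hom(G_i, Z(G_i))$ for $i \notin E$, and dually for $v$. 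Then, using the multiplication formula in $\D(G)\ot\D(G)$ together with the product formula for weak $R$-matrices implicit in \cref{thm:cwr}, I would compute $(z_u, r, p, z_v) R_E$ and read off its components, checking that the diagonal components of $u, v$ are shifted only by the central homomorphisms $z_u, z_v \in \Hom(G, Z(G))$, hence land in $\Aut_c(G_i) \cdot \Hom(G_i, Z(G_i)) = \Aut_c(G_i)$ (by \cref{lem:Adney}, since $\Aut_c$ is closed under multiplication by central homomorphisms) or in $\Hom(G_i, Z(G_i))$ respectively. This gives that any $E$-chiral $R$ has this form (taking $z_u = u - \pi_E$, $z_v = v - \pi_{E^c}$, after extracting $p, r$ from $R$), and conversely that $(z_u, r, p, z_v) R_E$ is always $E$-chiral.

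The second assertion then follows from the first with essentially no extra work: given $R = \cmorph$ of chirality $E$, write $R = (z_u, r, p, z_v) R_E$ as above; by \cref{thm:bicentral} a bicentral weak $R$-matrix $X$ lies in the center of $\D(G)\ot\D(G)$, so $X$ commutes with everything, and in particular $XR = X (z_u, r, p, z_v) R_E$. The key point is that the bicentral weak $R$-matrices form a group (the remark after \cref{def:gauge}, transported via \cref{thm:bicental-as-gaugings,cor:comm-to-bi}), and by \cref{thm:cwr} they are exactly the quadruples $(w, r', p', z)$ with $w, z \in \Hom(G, Z(G))$; so $X (z_u, r, p, z_v)$ is again a weak $R$-matrix of the form $(z_u', r + r', p'', z_v')$ for suitable central homomorphisms $z_u', z_v'$ and bicharacter-valued pieces — i.e. a product of the shape $(z_u', r'', p'', z_v') R_E$. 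Hence $XR$ has chirality $E$, provided it is genuinely a quasitriangular structure.

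The one genuine obstacle is verifying that $XR$ \emph{is} a quasitriangular structure at all, not merely a weak $R$-matrix of the right form. For this I would invoke that $X$ is bicentral, hence central in $\D(G)\ot\D(G)$ and grouplike-compatible with $\Delta$ in the sense of \cref{thm:bicental-as-gaugings}; the defining hexagon/compatibility axioms for $XR$ then reduce to those for $R$ after moving $X$ past the relevant coproduct and multiplication maps, exactly because $X$ commutes with $\Delta(\D(G))$ (\cref{cor:comm-to-bi}) and with all of $\D(G)\ot\D(G)$. Concretely, the axioms $(\Delta\ot\id)(XR) = (XR)_{13}(XR)_{23}$ etc. follow from the corresponding axioms for $R$ together with the cocommutativity-type identity $f(h\com1)\ot h\com2 = f(h\com2)\ot h\com1$ built into bicentrality, which is precisely what is needed to split $\Delta(X)$ symmetrically across the two tensor legs. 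I expect this verification to be the bulk of the author's actual proof, though it is plausible the paper simply cites the fact that multiplying a quasitriangular structure by a central, $\Delta$-compatible element yields another quasitriangular structure — this is a standard manipulation once one knows the relevant classes are groups, and \cref{thm:cwr} guarantees the component-wise bookkeeping works out since $w$ and $z$ only ever contribute central homomorphisms that get absorbed into $z_u, z_v$ without touching the diagonal $\Aut_c$-versus-$\Hom(-,Z)$ dichotomy that defines the chirality $E$.
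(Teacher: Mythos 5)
Your proposal is correct and follows essentially the same route as the paper: the first part is proved there exactly by the decomposition $u=z_u+\pi_E$, $v=z_v+\pi_{E^c}$ (with $z_u,z_v\in\Hom(G,Z(G))$) followed by multiplication in $\D(G)\ot\D(G)$, and the second part by the group structure of the bicentral weak $R$-matrices together with the first part. The only difference is that the paper does not re-verify the hexagon/intertwining axioms for $XR$ by hand: it simply observes that $F(X)*F(R)=F(XR)$ is again a Hopf algebra morphism (a weak $R$-matrix) and then lets the classification in \cref{thm:qts}, via the converse direction of the first part, certify $XR$ as an $E$-chiral quasitriangular structure --- a shortcut that your own converse argument already supplies, so your extra verification (whose splitting identities come from $X$ being a weak $R$-matrix rather than from bicentrality per se) is sound but unnecessary.
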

\begin{proof}
  Let $\cmorph$ be an $E$-chiral quasitriangular structure of $\D(G)$.  From \cref{thm:qts} and \citep[Theorem 3.14]{K3:QTS} it follows that there exists $z_u,z_v\in\Hom(G,Z(G))$ such that
\begin{align}
  u &= z_u + \pi_E,\label{eq:zu-def}\\
  v &= z_v + \pi_{E^c}.\label{eq:zv-def}
\end{align}
The identity $R=(z_u,r,p,z_v)R_E$ now follows from multiplication in $\D(G)\ot\D(G)$.

On the other hand, if $R=(u',r',p',v')=(w,r,p,z)R_E$ for some bicentral weak $R$-matrix $(w,r,p,z)$, then a direct check shows that $u'= w+\pi_E$, $v'=z+\pi_{E^c}$, $r'=r$, and $p'=p$.  Therefore $R$ is $E$-chiral.

For the second part, given a bicentral weak $R$-matrix $X$, then letting $F$ be defined as in the proof of \cref{thm:bicentral}, the properties of $X$ guarantee that $F(X)*F(R) = F(XR)\in \Hom(\DDG,\D(G))$.  The desired result then follows from \cref{cor:comm-to-bi} and the previous part.

This completes the proof.
\end{proof}

We now desire an explicit formula for the braid gauging corresponding to a given bicentral weak $R$-matrix of $\D(G)$.  We first need the following result.

\begin{lem}\label{lem:sum-to-ele}\label{lem:sum-to-ele-pow}
  Let $G$ be a finite group, and $p\in\Hom(\widehat{Z(G)},Z(G))$.  Suppose $H$ is a subgroup of $G$ containing $A$ and $B$.  Then for any characters $\chi,\eta$ of $H$ we have
  \[ \sum_{a\in A} \lambda_\chi(p(e_a))\lambda_\eta(a\inv) = \lambda_\chi(p(\lambda_\eta)\inv) = \lambda_\eta(p^*(\lambda_\chi)\inv).\]

  Moreover, with $\sigma$ the $A\times B$ bicharacter determined by $p$, then for all $a\in A$ and $b\in B$ we have
  \begin{equation}\label{eq:xp-vals}
  \begin{split}
    \ip{\lambda_\eta}{\sigma(\cdot,b)}_A = \delta_{b\inv,p(\lambda_\eta)},\\
     \ip{\lambda_\chi}{\sigma(a,\cdot)}_B = \delta_{a\inv,p^*(\lambda_\chi)},
  \end{split}
  \end{equation}
  where $\ip{\cdot}{\cdot}_K$ denotes the standard inner product of characters of the group $K$.
\end{lem}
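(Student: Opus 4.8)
The plan rests on \cref{lem-part:sigma}: for every $a\in A$ one has $p(e_a)=\frac1{|A|}\sum_{b\in B}\sigma(a,b)b$, and $b\mapsto\sigma(\cdot,b)$ is a group isomorphism $B\xrightarrow{\sim}\widehat A$ (dually $a\mapsto\sigma(a,\cdot)$ is an isomorphism $A\xrightarrow{\sim}\widehat B$). First I would record that the statement makes sense: since $A,B\subseteq Z(G)$ and $A,B\subseteq H$, every element of $A$ or $B$ commutes with all of $H$, so $A,B\subseteq Z(H)$ and $\lambda_\chi,\lambda_\eta$ restrict to characters of $A$ and of $B$; moreover $p$ factors through the restriction $\du G\to\du A$ (it kills $e_g$ for $g\notin A$), so $p(\lambda_\eta)$ is legitimately computed as $p$ applied to the group-like $\sum_{a\in A}\lambda_\eta(a)e_a\in\du A$, and likewise $p^*$ factors through $\du G\to\du B$, giving $p^*(\lambda_\chi)\in A$. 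With these conventions fixed, every suppressed restriction map in the statement is accounted for.

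The one computational fact everything rests on is the identification: \emph{$p(\lambda_\eta)^{-1}$ is the unique $b\in B$ with $\sigma(\cdot,b)=\lambda_\eta$ on $A$, and $p^*(\lambda_\chi)^{-1}$ is the unique $a\in A$ with $\sigma(a,\cdot)=\lambda_\chi$ on $B$.} To see the first, apply $p$ to $\lambda_\eta=\sum_a\lambda_\eta(a)e_a$, insert $p(e_a)=\frac1{|A|}\sum_b\sigma(a,b)b$, and swap sums to get $p(\lambda_\eta)=\sum_b\bigl(\frac1{|A|}\sum_a(\lambda_\eta\cdot\sigma(\cdot,b))(a)\bigr)b$; by orthogonality of the characters of the abelian group $A$ the bracketed coefficient is $\delta_{\lambda_\eta\sigma(\cdot,b)=\varepsilon}=\delta_{\sigma(\cdot,b)=\lambda_\eta^{-1}}$, so $p(\lambda_\eta)$ is the $b$ with $\sigma(\cdot,b)=\lambda_\eta^{-1}$, equivalently $\sigma(\cdot,p(\lambda_\eta)^{-1})=\lambda_\eta$. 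The claim for $p^*$ follows from the analogous formula $p^*(e_b)=\frac1{|A|}\sum_a\sigma(a,b)a$ (obtained by transposing the matrix of $p$) by the same computation with $A\leftrightarrow B$. The ``Moreover'' part is just this identification repackaged: $\ip{\lambda_\eta}{\sigma(\cdot,b)}_A$ equals, by orthonormality of the irreducible characters of $A$, $\delta_{\lambda_\eta=\sigma(\cdot,b)}=\delta_{b^{-1},p(\lambda_\eta)}$, and symmetrically $\ip{\lambda_\chi}{\sigma(a,\cdot)}_B=\delta_{a^{-1},p^*(\lambda_\chi)}$.

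For the main displayed identity I would substitute $p(e_a)=\frac1{|A|}\sum_b\sigma(a,b)b$ into $\sum_{a\in A}\lambda_\chi(p(e_a))\lambda_\eta(a^{-1})$, interchange the two sums, and evaluate the inner sum $\sum_{a\in A}\sigma(a,b)\lambda_\eta(a)^{-1}=|A|\,\ip{\sigma(\cdot,b)}{\lambda_\eta}_A=|A|\,\delta_{\sigma(\cdot,b)=\lambda_\eta}$; only the term $b=p(\lambda_\eta)^{-1}$ survives, contributing $\lambda_\chi(p(\lambda_\eta)^{-1})$. For the final equality $\lambda_\chi(p(\lambda_\eta)^{-1})=\lambda_\eta(p^*(\lambda_\chi)^{-1})$ there are two equivalent routes: either run the same argument for $\sum_{b\in B}\lambda_\eta(p^*(e_b))\lambda_\chi(b^{-1})$ and note that after expanding both and substituting $a\mapsto a^{-1},b\mapsto b^{-1}$ (using $\sigma(a^{-1},b^{-1})=\sigma(a,b)$) the two sums coincide; or observe directly from the identification that $\lambda_\chi(p(\lambda_\eta)^{-1})=\sigma\bigl(p^*(\lambda_\chi)^{-1},p(\lambda_\eta)^{-1}\bigr)=\lambda_\eta(p^*(\lambda_\chi)^{-1})$, since $\lambda_\chi$ on $B$ is $\sigma(p^*(\lambda_\chi)^{-1},\cdot)$ and $\lambda_\eta$ on $A$ is $\sigma(\cdot,p(\lambda_\eta)^{-1})$.

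There is no genuine obstacle: the proof is a short orthogonality computation once \cref{lem-part:sigma} is in hand, and the real work is purely organizational — pinning down the suppressed restriction maps and keeping the inverses straight (distinguishing $\sigma(\cdot,b)$ from $\sigma(\cdot,b^{-1})$, $\lambda_\eta$ from $\lambda_\eta^{-1}$, and remembering that character orthogonality forces an inverse on one argument). Fixing those conventions at the outset, as above, is what keeps the bookkeeping from producing spurious errors.
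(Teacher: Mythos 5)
Your proposal is correct and follows essentially the same route as the paper: expand via $p(e_a)=\frac{1}{|A|}\sum_b\sigma(a,b)b$ from \cref{lem-part:sigma}, use orthogonality of characters of the abelian groups $A$ and $B$ together with the uniqueness statement for $\sigma$, and identify $p(\lambda_\eta)$ and $p^*(\lambda_\chi)$ as (inverses of) the unique elements realizing $\lambda_\eta,\lambda_\chi$ through $\sigma$. The only difference is organizational—you establish the identification of $p(\lambda_\eta)$, $p^*(\lambda_\chi)$ first and deduce the sum identity and \cref{eq:xp-vals} from it, whereas the paper computes the sum first—so the two arguments are the same computation in a different order.
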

\begin{proof}
  Let assumptions and notation be as in the statement.

  We have
  \begin{align*}
    \sum_{a\in A} \lambda_\chi(p(e_a))\lambda_\eta(a\inv) &= \sum_{b\in B}\ip{\sigma(\cdot,b)}{\lambda_\eta}_A \lambda_\chi(b)\\
    &= \sum_{a\in A} \ip{\lambda_\chi}{\sigma(a,\cdot)}_B \lambda_\eta(a).
  \end{align*}
  By \cref{lem:p-props} there are unique $x\in A,y\in B$ such that $\ip{\lambda_\chi}{\sigma(a,\cdot)}_B=\delta_{a,x}$ and $\ip{\sigma(\cdot,b)}{\lambda_\eta}_A=\delta_{b,y}$ for all $a\in A$ and $b\in B$.  For these $x,y$ we therefore conclude that
  \[ \sum_{a\in A} \lambda_\chi(p(e_a))\lambda_\eta(a\inv) = \lambda_\chi(y) = \lambda_\eta(x).\]

  We next observe that
  \begin{align*}
    p(\lambda_\eta) = \frac{1}{|A|}\sum_{\substack{a\in A\\b\in B}} \sigma(a,b) \lambda_\eta(a)b = \sum_{b\in B} \ip{\lambda_\eta}{\sigma(\cdot,b)}_A b\inv.
  \end{align*}
  Therefore $p(\lambda_\eta)=y\inv$.  A similar calculation shows that $p^*(\lambda_\chi)=x\inv$.

  This completes the proof of all claims.
\end{proof}

\begin{thm}\label{thm:weak-gauge}
  Let $(w,r,p,z)$ be a bicentral weak $R$-matrix of $\D(G)$.  Then the corresponding braid gauging is given by
  \begin{equation}\label{eq:weak-to-gauge}
  \begin{split}
    b((gG',\lambda),(hG',\alpha)) &= r(h\inv,g) \lambda(p(\alpha)\inv z(h))\alpha(w(g\inv))\\
    &= r(h\inv,g) \lambda(z(h))\alpha(p^*(\lambda)\inv w(g\inv)).
  \end{split}
  \end{equation}
\end{thm}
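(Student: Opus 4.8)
The plan is to compute the braid gauging $b$ directly from its definition by tracking how the weak $R$-matrix $(w,r,p,z)$, acting on a pair of simple objects, differs from the standard braiding coming from $R_0$. Since braid gaugings are computed on homogeneous simple objects $X_a = (t_a,\psi_a)$ of degree $(t_aG', \lambda_{\psi_a})$, it suffices to evaluate how the bicentral weak $R$-matrix, viewed as an element of $\D(G)\ot\D(G)$ and acting via the braiding formula $v\ot w\mapsto R\inv\cdot w\ot v$, rescales the braiding on $X_i\ot X_j$ relative to the action of $R_0$. Concretely, I would take a quasitriangular structure of the form $R = (w,r,p,z)R_0$ (using \cref{prop:qts-gauge} with $E = \{1,\dots,n\}$, so $R$ is left-handed and $R_E = R_0$), and observe that on simple objects, the braiding for $R$ equals $b(X,Y)$ times the braiding for $R_0$, where $b$ is the sought braid gauging.

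First I would pin down the action. Using the explicit form $(w,r,p,z) = \sum_{s,a,b} e_s\# aw(b)\ot r(s)e_b\# p(e_a)z(s\inv)$ from \cref{eq:cwr}, and the known action of $\D(G)$ on the representation $W = (\{s_j\},\{t_j\},\rho)$ described in \cref{sec:prelims}, I would act on a highest-weight-type vector $\ket{t_0,w}\ot\ket{t_0,w'} = \ket{1,w}\ot\ket{1,w'}$ in $X_i\ot X_j$. Because the weak $R$-matrix is bicentral (hence central in $\D(G)\ot\D(G)$ by \cref{thm:bicentral}), its action on $X_i\ot X_j$ is by a scalar, and that scalar is exactly $b(X_i,X_j)$. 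Evaluating $e_s\# aw(b)$ on $\ket{1,w}$ forces $s = t_i$ (the representative for $X_i$, with $t_i\in G$ in the class labeled by $X_i$), picks up $\psi_i$-eigenvalues on the central group elements, and similarly for the second tensor factor; the delta functions $e_s, e_b$ collapse the sum over $s,b$, leaving a sum over $a\in G$ (effectively $a\in A$, since $p(e_a)\ne 0$ only there). The central elements $w(b), z(s\inv), p(e_a)$ all lie in $Z(G)$, so they act by the linear characters $\lambda_{\psi_i}$ and $\lambda_{\psi_j}$, and the bicharacter $r$ contributes $r(s) = r(t_i)$ paired against the degree of $X_j$. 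After relabeling $g = t_i$, $h = t_j$, $\lambda = \lambda_{\psi_i}$, $\alpha = \lambda_{\psi_j}$, this should produce $r(h\inv, g)\,\lambda(p(\alpha)\inv z(h))\,\alpha(w(g\inv))$ after applying the summation identity $\sum_{a\in A}\lambda(p(e_a))\alpha(a\inv) = \lambda(p(\alpha)\inv)$ from \cref{lem:sum-to-ele} (with a sign/inversion bookkeeping that produces $p(\alpha)\inv$ rather than $p(\alpha)$). The second equality in \cref{eq:weak-to-gauge} is then immediate from the symmetry relation $\lambda(p(\alpha)\inv) = \alpha(p^*(\lambda)\inv)$, again from \cref{lem:sum-to-ele}, by moving the $p$-term from the $\lambda$-slot to the $\alpha$-slot.

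The main obstacle I anticipate is bookkeeping the inverses and the $\tau$-twist correctly: the braiding is $R\inv\tau$ rather than $R$, so there is an overall inversion and coordinate swap to carry through, and this is precisely what converts $r(s)$ and the $u,v$-type contributions into $r(h\inv,g)$ (note the swapped, inverted arguments) and turns $p(\alpha)$ into $p(\alpha)\inv$ and $w(g)$ into $w(g\inv)$. One must be careful that the standard braiding from $R_0$ — which contributes the "base" scalar that we are dividing out — is itself nontrivial on $X_i\ot X_j$ (it is the usual $S$-matrix-type phase), so the claim is really that the \emph{ratio} of the $R$-braiding to the $R_0$-braiding is the stated $b$; equivalently, one checks that $(w,r,p,z)$ alone, as a central element acting on $X_i\ot X_j$, multiplies by exactly $b(X_i,X_j)$, which is cleanest since $(w,r,p,z)$ is central and its action is genuinely a single scalar. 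Finally I would verify consistency: that the resulting $b$ is bilinear on $\U(\C) = G/G'\times\widehat{Z(G)}$ (it manifestly factors through $G/G'$ in each group-slot because $r$ is a bicharacter trivial on $G'$ and $w,z$ land in $Z(G)\subseteq$ kernel of the relevant pairings after composing with a linear character, while the $\widehat{Z(G)}$-dependence enters only through $\lambda,\alpha$), matching the canonical isomorphism of \cref{thm:bicental-as-gaugings}, and that it is well-defined independent of the coset representatives $g,h$ for the same reason.
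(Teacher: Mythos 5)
Your proposal is correct and follows essentially the same route as the paper: write the braiding as the central factor times the standard one via \cref{prop:qts-gauge}, use centrality (via \cref{thm:bicentral}) to reduce to a scalar computed from \cref{eq:cwr} and the explicit $\D(G)$-module action, and evaluate the sum over $A$ with \cref{lem:sum-to-ele}, with the second equality from the same lemma. The only point to pin down is the one you already flag: the scalar $b(X,Y)$ is that of the convolution inverse $(-w,-r,-p,-z)$ acting on the swapped pair $Y\ot X$ (not of $(w,r,p,z)$ on $X\ot Y$), which is exactly the bookkeeping that produces $r(h\inv,g)$, $p(\alpha)\inv$ and $w(g\inv)$ in \cref{eq:weak-to-gauge}.
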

\begin{proof}
  Let $(w,r,p,z)$ be a bicentral weak $R$-matrix of $\D(G)$.  The braiding determined by $R=(w,r,p,z)R_E$ is given by the left action of
  \[R\inv = (-w,-r,-p,-z)R_{E}\inv = R_E\inv(-w,-r,-p,-z).\] So to compute the formula for the desired braid gauging we need only compute the action of $(-w,-r,-p,-z)$.

  Let $(g,\chi),(h,\eta)$ be any two simple objects of $\Rep(\D(G))$, and let $\ket{t_i,m}\in(g,\chi),\ket{t_k,n}\in(h,\eta)$ be any two vectors.  By \cref{eq:cwr} and the fact that $w,r,z$ are all class functions we have
  \begin{align*}
    (-w,-r,-p,-z)\cdot&(\ket{t_k,n}\ot\ket{t_i,m})\\
    &= r(h\inv,g) \sum_{a\in A}\ket{t_k,(a\inv w(g\inv))\cdot n}\\
    &\qquad{}\qquad{} \ot \ket{t_i, (p(e_a)z(h))\cdot m}\\
    &= r(h\inv,g) \lambda_\eta(w(g\inv))\lambda_\eta(z(h))\\
    &\qquad{}\Big(\sum_{a\in A}\lambda_\eta(a\inv)\lambda_\chi(p(e_a))\Big)\ket{t_i,m}\ot \ket{t_k,n}.
  \end{align*}
  The desired formula then follows from \cref{lem:sum-to-ele}.
\end{proof}
We identify the bicentral weak $R$-matrix $(w,r,p,z)$ with the braid gauging given by \cref{eq:weak-to-gauge} for the remainder of the paper.

\begin{rem}\label{rem:mine-is-subset}
  This proves that the braidings obtained from \citep{K3:QTS}, which we use here, are included in the braidings constructed in \citep{Nik:braidings}. The former are all given by braid gaugings of the usual braidings obtained from the $R_E$, which are themselves obtained from standard braidings and their inverses.  These two descriptions are both derived as complete descriptions of all possible braidings, so must yield all of the same braidings.  However the descriptions are rather distinctive, and proceed from distinct starting points.  This makes a translation between them desirable. A direct translation going in the other direction has not yet been determined.
\end{rem}

\begin{df}\label{df:z-def}
  Let $R=\cmorph$ be an $E$-chiral quasitriangular structure of $\D(G)$. By \cref{thm:qts} we may define $z_R\in\Hom(G,Z(G))$ by
  \[  z_R(g)\mathcal{I}_E(g) = u(g)v(g\inv)\]
  for all $g\in G$.

  Equivalently, $z_R=z_u-z_v$, with $z_u,z_v$ defined as in \cref{eq:zu-def,eq:zv-def}.
\end{df}

\begin{example}
  For the standard $E$-chiral quasitriangular structure $R_E$ we have that $z_{R_E}$ is the trivial morphism.
\end{example}
\begin{example}
  The particular case $R=\cmorph[1][r][p][z]$, which will appear frequently in \cref{sec:indicators,sec:verlinde}, has $z_R=-z$.  On the other hand, $R=\cmorph[z][r][p][1]$ has $z_R=z$.
\end{example}

\begin{cor}\label{cor:DG-mod-gauge}
  Let $R=\cmorph$ be an $E$-chiral quasitriangular structure of $\D(G)$.  Then for the simple objects $(g,\chi),(h,\eta)$ of $\Rep(\D(G))$ we have
  \begin{gather*}
    T_X^R = r(g,g)\lambda_\chi(p(\lambda_\chi) z_R(g\inv)) T_X^{R_E},\\
    S_{X,Y}^R = \frac{\lambda_\chi(p(\lambda_\eta)\inv z_R(h\inv)) \lambda_\eta(p(\lambda_\chi)\inv z_R(g\inv))}{r(g,h)r(h,g)} S_{X,Y}^{R_E}.
  \end{gather*}
\end{cor}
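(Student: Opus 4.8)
The plan is to realize $R$ as a braid gauging of the standard $E$-chiral quasitriangular structure $R_E$ and then apply \cref{thm:mod-gauge}. Since $R=\cmorph$ is $E$-chiral, \cref{prop:qts-gauge} together with \cref{eq:zu-def,eq:zv-def} lets me write $R=(z_u,r,p,z_v)R_E$, where $(z_u,r,p,z_v)$ is a bicentral weak $R$-matrix and $z_u=u-\pi_E$, $z_v=v-\pi_{E^c}$. Because $(z_u,r,p,z_v)$ is central in $\D(G)\ot\D(G)$, left multiplication by it turns the braiding attached to $R_E$ into the braiding attached to $R$; tracing through $R\inv=R_E\inv(z_u,r,p,z_v)\inv$ exactly as in the proof of \cref{thm:weak-gauge} identifies the relevant braid gauging as the one attached to $(z_u,r,p,z_v)$ by \cref{eq:weak-to-gauge}. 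That is, writing $c^{R}$ for the braiding of $\Rep(\D(G),R)$, one has $c^{R}=(c^{R_E})^b$ with
\[ b((gG',\lambda),(hG',\alpha)) = r(h\inv,g)\,\lambda(p(\alpha)\inv z_v(h))\,\alpha(z_u(g\inv)).\]

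With this in hand, \cref{thm:mod-gauge} applied to the braiding $c^{R_E}$ and the gauging $b$ gives at once
\[ T^R_X=b(X,X)\inv\,T^{R_E}_X,\qquad S^R_{X,Y}=b(X,Y)\,b(Y,X)\,S^{R_E}_{X,Y}\]
for simple objects $X=(g,\chi)$ and $Y=(h,\eta)$, recalling $\deg(g,\chi)=(gG',\lambda_\chi)$. It then remains only to substitute the explicit $b$ and simplify.

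For the $T$-entry I would evaluate $b$ at $((gG',\lambda_\chi),(gG',\lambda_\chi))$, use that $r$ is a bicharacter so $r(g\inv,g)=r(g,g)\inv$, that $\lambda_\chi$ is a character, and that $z_u,z_v,z_R$ are homomorphisms into $Z(G)$; then collect the central factors via $z_R=z_u-z_v$ from \cref{df:z-def} together with $z_R(g\inv)=z_R(g)\inv$ to rewrite $b(X,X)\inv$ in the stated form. For the $S$-entry I would compute $b(X,Y)$ and $b(Y,X)$ separately from \cref{eq:weak-to-gauge}, multiply, and use $r(h\inv,g)r(g\inv,h)=(r(g,h)r(h,g))\inv$ together with $z_v(h)z_u(h)\inv=z_R(h\inv)$ and $z_v(g)z_u(g)\inv=z_R(g\inv)$ to arrive at the displayed quotient. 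No modularity hypothesis is needed, since \cref{def:T-def,def:S-matrix,thm:mod-gauge} apply to any braided fusion category.

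The only real obstacle is the bookkeeping of inverses: I must match $z_u$ and $z_v$ to the correct slots of \cref{eq:weak-to-gauge}, confirm that the gauging indeed runs from $c^{R_E}$ to $c^R$ (and not the reverse), so that the inverse in $T^R_X=b(X,X)\inv T^{R_E}_X$ lands on the correct side, and consistently repackage the pair $z_u,z_v$ into the single morphism $z_R$ using $z_R(g\inv)=z_R(g)\inv$. Once these conventions are pinned down the remaining computation is entirely formal.
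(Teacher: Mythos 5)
Your proposal is correct and is exactly the paper's route: the paper's proof is literally ``apply \cref{df:z-def,thm:weak-gauge} to \cref{thm:mod-gauge}'', and your use of \cref{prop:qts-gauge} to write $R=(z_u,r,p,z_v)R_E$ and then read off the gauging from \cref{eq:weak-to-gauge} is the same argument spelled out. One caveat on your final bookkeeping step for the $T$-entry: carrying it out honestly gives $b(X,X)\inv=r(g,g)\,\lambda_\chi\bigl(p(\lambda_\chi)\,z_R(g)\bigr)$, i.e.\ $z_R(g)$ rather than the displayed $z_R(g\inv)$ (this agrees with \cref{ex:z-sign} and with a direct check via the ribbon element of \cref{thm:ribbon} for $R=\cmorph[1][0][0][z]$), so do not insert an extra inverse to force agreement with the printed statement --- the discrepancy lies in the statement's sign convention, while your $S$-entry computation lands exactly on the displayed formula.
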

\begin{proof}
  Apply \cref{df:z-def,thm:weak-gauge} to \cref{thm:mod-gauge}.
\end{proof}

%
%
\section{Invariance of \texorpdfstring{$\Rep(\D(G))$}{Rep(D(G))} under change of braiding}\label{sec:invariance}
Our goal now is to show that the category $\Rep(\D(G))$ does not change under a change of braiding whenever $G$ is purely non-abelian.  We first need a few technical lemmas.

\begin{lem}\label{lem:finite-comp}
  Let $G$ be a group.  The following are equivalent:
  \begin{enumerate}
    \item $G$ is purely non-abelian.
    \item There exists $N\in\BN$ such that for any sequence $\{f_m\}_{m=1}^\infty$ in $\Hom(G,Z(G))\subseteq \End(G)$, the sequence $\{g_m\}_{m=1}^\infty$ inductively defined by $g_1=f_1$ and $g_{m+1}=f_{m+1}g_m$ satisfies $g_k = 0$ (the trivial morphism) for all $k\geq N$.
  \end{enumerate}
\end{lem}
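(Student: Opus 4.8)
\section*{Proof proposal}

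The plan is to reduce the whole statement to the action, by restriction, of $\Hom(G,Z(G))$ on the finite abelian group $Z := Z(G)$. For $z\in\Hom(G,Z(G))$ write $\bar z := z|_{Z}\in\End(Z)$; since $\Img(z)\subseteq Z$ one has $\overline{z_1z_2}=\bar z_1\circ\bar z_2$ (composition in $\End(Z)$) as well as closure under $\pm$, so $M:=\{\bar z : z\in\Hom(G,Z(G))\}$ is a subring of $\End(Z)$. Moreover, because $\Img(f_1)\subseteq Z$ already, a direct induction gives, for $m\ge 2$, $g_m=\bar f_m\circ\cdots\circ\bar f_2\circ f_1$. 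Hence $g_m=0$ as soon as the composite $\bar f_m\circ\cdots\circ\bar f_2$, a product of $m-1$ elements of $M$, vanishes, and everything comes down to showing that $M$ is a \emph{nilpotent} ring exactly when $G$ is purely non-abelian.

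For $(1)\Rightarrow(2)$, assume $G$ is purely non-abelian, so by \cref{lem:Adney} every $z\in\Hom(G,Z(G))$ yields $1+z\in\Aut_c(G)$; in particular $1+z$ is bijective on $G$, and since it preserves $Z$ it follows that $\id_Z+\bar z$ is a unit of $\End(Z)$. I would first show each $\bar z$ is nilpotent: the powers $\bar z^{\,j}=\overline{z^{\,j}}$ all lie in the finite set $M$, so the sequence is eventually periodic and produces an idempotent $e=\bar z^{\,N_0}\in M$; if $e\neq 0$ then, applying \cref{lem:Adney} to $-z^{\,N_0}\in\Hom(G,Z(G))$, the element $\id_Z-e=\id_Z+\overline{-z^{\,N_0}}$ is a unit of $\End(Z)$, yet $(\id_Z-e)e=0$ with $e\neq0$ — a contradiction. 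So $M$ is a nil subring of $\End(Z)$, and a finite nil ring is nilpotent (equivalently: the descending chain $Z\supseteq MZ\supseteq M^2Z\supseteq\cdots$ strictly decreases until it reaches $0$, by Nakayama's lemma applied to the nil, hence Jacobson-radical, ideal $M$ of $\BZ\cdot 1+M\subseteq\End(Z)$). Thus $M^{N}Z=0$ for some $N$ bounded by the composition length of $Z(G)$, a quantity depending only on $G$; combined with the factorization of $g_m$ above, this gives $g_m=0$ for all $m\ge N+1$.

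For $(2)\Rightarrow(1)$ I argue by contraposition. If $G$ is not purely non-abelian, then in the decomposition \cref{eq:decomp} the factor $G_0$ is nontrivial, and the projection $f\colon G\to G$ onto $G_0$ (composed with the inclusion $G_0\hookrightarrow G$) has image $G_0\subseteq Z(G)$, so $f\in\Hom(G,Z(G))$ and $f\circ f=f$. Taking $f_m=f$ for all $m$ gives $g_m=f$ for every $m$, and $f\neq 0$ since $G_0\neq 1$; hence no $N$ works and (2) fails.

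The main obstacle is precisely the passage from ``every element of $M$ is nilpotent'' to ``$M$ is nilpotent'', i.e.\ the existence of a single uniform exponent killing all products of a given length: the pointwise statement falls straight out of \cref{lem:Adney}, but the uniformity requires either the classical fact that a finite nil ring is nilpotent or the Nakayama/descending-chain argument above. The remaining ingredients — the identity $g_m=\bar f_m\circ\cdots\circ\bar f_2\circ f_1$, the idempotent trick, and the $(2)\Rightarrow(1)$ direction — are routine.
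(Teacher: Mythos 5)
Your proof is correct, but it takes a genuinely different route from the paper's. The paper argues directly on the sequence $\{g_m\}$: since $\Hom(G,Z(G))$ is finite, among the first $|\Hom(G,Z(G))|$ terms either $0$ appears (and then persists) or some nonzero $g$ repeats, which yields $fg=g$ for a composite $f\in\Hom(G,Z(G))$; elementwise nilpotency of $\Hom(G,Z(G))$ (via Fitting's Lemma) then forces $g=0$, so the uniform bound is simply $N=|\Hom(G,Z(G))|$, and the converse is likewise handled by Fitting's Lemma plus a constant sequence. You instead pass to the restriction ring $M\subseteq\End(Z(G))$, prove elementwise nilpotency without Fitting --- using \cref{lem:Adney} to make $\id_{Z}-e$ a unit and killing the idempotent $e=\bar z^{N_0}$ --- and then upgrade to uniform nilpotency of $M$ via the classical fact that a finite nil ring is nilpotent (nil ideal in $\BZ\cdot 1+M$ lies in the Jacobson radical, Nakayama on the chain $Z\supseteq MZ\supseteq\cdots$); your contrapositive direction builds the non-nilpotent idempotent projection onto $G_0$ explicitly rather than quoting Fitting. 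The trade-off: your argument needs a small amount of ring theory but relies only on \cref{lem:Adney}, which the paper already states, and it produces a much sharper bound ($N$ at most one more than the composition length of $Z(G)$, versus $|\Hom(G,Z(G))|$), together with the stronger structural conclusion that every product of $N$ restrictions vanishes; the paper's argument is more elementary and self-contained at the level of the sequence itself. All the individual steps you flag --- the factorization $g_m=\bar f_m\circ\cdots\circ\bar f_2\circ f_1$, the idempotent-unit contradiction, and the finite-nil-implies-nilpotent upgrade --- check out.
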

\begin{proof}
  The reverse direction of the equivalence follows by the contrapositive and Fitting's Lemma, which implies that $G$ has a non-trivial abelian direct factor if and only there is an element of $\Hom(G,Z(G))$ which is not nilpotent.

  So consider the forward direction and sequences $\{f_m\}_{m=1}^\infty$, $\{g_m\}_{m=1}^\infty$ as in the statement.  Since $G$ is finite, so is $\Hom(G,Z(G))$.  Therefore in the set with repetitions $\{g_m\}_{m=1}^{|\Hom(G,Z(G))|}$ either the trivial morphism appears, or some non-trivial element $g\in\Hom(G,Z(G))$ appears at least twice.  If $g_m$ is trivial for some $m$ then so is $g_{m+1}$, so we need only consider the second case.  By definition of the sequence $\{g_m\}_{m=1}^\infty$ this implies there exists $f\in\Hom(G,Z(G))$ such that $fg = g$.  Therefore by induction for all $n\in\BN$ $f^n g = g$. But from Fitting's Lemma we know that if $G$ is purely non-abelian then every element of $\Hom(G,Z(G))$ is nilpotent. Therefore $f^n$ is the trivial morphism for all sufficiently large $n$, and we conclude that $g$ is the trivial morphism.  It follows that by taking $N=|\Hom(G,Z(G))|$ we obtain the desired result.
\end{proof}
Equivalently, the result says that $G$ is purely non-abelian if and only if every composition of $k\geq |\Hom(G,Z(G))|$ elements of $\Hom(G,Z(G))$ is necessarily trivial.  This will be critically important for this section's goal, as in particular we will need the following result.

\begin{lem}\label{lem:recursion}
    Let $G$ be a purely non-abelian group. Then for any $m,t\in\BN$, $w\in\Hom(G,Z(G))$, $f_1,...,f_m\in \Hom(\widehat{Z(G)},Z(G))$, and $g_1,...,g_m\in\BCh{G}$ there is a unique morphism $z\in\Hom(G,Z(G))$ satisfying
    \[ z = w+\sum_{s=1}^t\sum_{i=1}^m f_i (z^*)^s g_i.\]
\end{lem}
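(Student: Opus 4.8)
The plan is to solve the equation by iteration and to control the iteration with the nilpotency supplied by \cref{lem:finite-comp}. Write $\Phi(z)=w+\sum_{s=1}^{t}\sum_{i=1}^{m}f_i(z^*)^s g_i$, so we are looking for a fixed point of $\Phi$ on $\Hom(G,Z(G))$. Two preliminary remarks expose the structure. First, for $z\in\Hom(G,Z(G))$ the endomorphism $z^*$ of $\widehat{Z(G)}$ is the transpose of $z|_{Z(G)}$, so $(z^*)^s$ is the transpose of the $s$-fold self-composition $z^{\,s}\in\Hom(G,Z(G))$ (self-composition stays in $\Hom(G,Z(G))$ because $\Img z\subseteq Z(G)$); by \cref{lem:finite-comp}, $z^{\,s}=0$ as soon as $s\ge N:=|\Hom(G,Z(G))|$, so the sum defining $\Phi$ is effectively finite on any solution. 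Second, set $\mathcal Z^*:=\{(\psi|_{Z(G)})^*:\psi\in\Hom(G,Z(G))\}\subseteq\End(\widehat{Z(G)})$; this is closed under composition, and, dualizing \cref{lem:finite-comp}, any composite of at least $N$ members of $\mathcal Z^*$ vanishes.

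The technical heart is a crossing identity: for $f\in\Hom(\widehat{Z(G)},Z(G))$ and $g\in\BCh G$, the endomorphism $\Res^G_{Z(G)}\circ\,g|_{Z(G)}\circ f$ of $\widehat{Z(G)}$ lies in $\mathcal Z^*$. One verifies this by exhibiting the preimage: it equals $(\phi|_{Z(G)})^*$ with $\phi=f^*\circ\Res^G_{Z(G)}\circ g^*\in\Hom(G,Z(G))$, where $(\cdot)^*$ denotes transpose with respect to the canonical pairings; a short pairing computation shows both maps send $\chi\in\widehat{Z(G)}$ to the character $v\mapsto g(f(\chi))(v)$ of $Z(G)$. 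The consequence is that along any composite, whenever two of the auxiliary morphisms $f_i,g_i$ (or their transposes) become adjacent they fuse into a single member of $\mathcal Z^*$, so they cannot accumulate without also increasing the $\mathcal Z^*$-length.

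With these in hand, run the iteration $z_0=0$, $z_{k+1}=\Phi(z_k)$, and set $\delta_k=z_k-z_{k-1}$. Telescoping gives $\delta_{k+1}=\sum_{s,i,j}f_i(z_k^*)^j\,\delta_k^*\,(z_{k-1}^*)^{\,s-1-j}g_i$ with $\delta_k^*=(z_k-z_{k-1})^*\in\mathcal Z^*$. Restricting to $Z(G)$, transposing, and substituting the corresponding expression for $\delta_k|_{Z(G)}$ again and again, one unfolds $\delta_{k+1}^*$ into a sum of composites in $\End(\widehat{Z(G)})$; after fusing adjacent auxiliary morphisms via the crossing identity, each summand is a composite of at least $\lfloor k/c\rfloor$ members of $\mathcal Z^*$ for an absolute constant $c$ (the innermost ``variable'' factor tracing back to $w|_{Z(G)}$). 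Hence $\delta_{k+1}^*=0$ once $k\ge cN$, and then $z_{k+1}^*=z_k^*$ forces $z_{k+2}=\Phi(z_{k+1})=z_{k+1}$, so $z:=z_{k+1}$ is the desired solution. For uniqueness, if $z,z'$ both satisfy the equation then $\delta:=z-z'$ obeys $\delta=\sum_{s,i,j}f_i(z^*)^j\,\delta^*\,((z')^*)^{\,s-1-j}g_i$, a recursion linear in $\delta$ of exactly the same shape, and the same unfolding gives $\delta^*=0$, whence $\delta=0$ and $z=z'$.

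I expect the main obstacle to be the bookkeeping in the last paragraph: showing that the iterated substitutions genuinely produce composites of unbounded $\mathcal Z^*$-length rather than having that length absorbed each time a pair of auxiliary morphisms fuses. Making this rigorous---say, by an induction that at each level of substitution tracks the number of genuine $\mathcal Z^*$-factors separating consecutive auxiliary morphisms---is precisely where the crossing identity must be used with care; the rest is formal manipulation of convolution, composition and transpose together with the nilpotency of \cref{lem:finite-comp}.
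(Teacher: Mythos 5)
Your proposal is correct and is essentially the paper's argument: both iterate the self-map $\Phi$ (the paper's $f$) on $\Hom(G,Z(G))$ and invoke \cref{lem:finite-comp} to conclude that after enough iterations every term depending on the starting point is a composite of too many elements of $\Hom(G,Z(G))$ and hence vanishes, so the iteration stabilizes at the unique fixed point. The paper obtains existence and uniqueness in one stroke by observing that $f^{2N}$ is a constant map (every $z$-dependent term of $f^{2k}(z)$ is a composite of at least $2k+1$ central homomorphisms), whereas you track successive differences and prove uniqueness separately; your ``crossing identity'' merely makes explicit the fusion of adjacent $f_i,g_i$ factors that the paper leaves implicit, and the bookkeeping you flag as a concern is no more delicate than what the paper itself elides.
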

\begin{proof}
  Suppose $G$ is purely non-abelian. Define a set map \[f\colon \Hom(G,Z(G))\to\Hom(G,Z(G))\] by $f(z)= w+\sum_{s=1}^t\sum_{i=1}^m f_i (z^*)^s g_i$. We have
  \[ f^2(z)=f(f(z)) = w + \sum_{i_1=1}^m\sum_{s_1=1}^t f_{i_1}\left(w^* + \sum_{i_2=1}^m\sum_{s_2=1}^t g_{i_2}^* z^{s_2} f_{i_2}^*\right)^{s_1} g_i.\]
  We see that all terms involving $z$ are a composition of at least three elements of $\Hom(G,Z(G))$.  Moreover, by induction on $k\in\BN$ we see that in $f^{2k}(z)$ every term involving $z$ is a composition of at least $2k+1$ elements of $\Hom(G,Z(G))$.  By \cref{lem:finite-comp} there exists $N\in\BN$ such that for all $k\geq N$ $f^{2k}(z)$ does not depend on $z$.  So for any $z_0\in\Hom(G,Z(G))$ it follows that $z=f^{2N}(z_0)$ is the unique solution to the desired identity.
\end{proof}

\begin{example}
  The trivial morphism $z=0$ is the unique solution if and only if $w=0$.  At the other extreme, if the $f_i,g_i$ are all trivial then $z=w$ is the unique solution.
\end{example}

\begin{example}
  Let $A=\cyc{a}$ be a non-trivial cyclic group, let $\widehat{A}=\cyc{\alpha}$.  Define $p\colon \widehat{A}\to A$ by $p(\alpha)=a\inv$, and define $r\colon A\to \widehat{A}$ by $r(a)=\alpha$.  Any $w\in\Hom(A,Z(A))=\End(A)$ is completely determined by $w(a)=a^t$ for some $t\in\{0,...,|A|-1\}$.  Fix such $w,t$.  Consider any $z\in\Hom(G,Z(G))$, determined by $z(a)=a^s$.  Then we compute that $(pz^*r)(a) = a^{-s}$, so that $(w+pz^*r)(a) = a^{t-s}$.  Therefore $z=w+pz^*r$ if and only if $2s\equiv t\bmod |A|$.  If $|A|$ is odd then we can always find such a value of $s$.  However, if $|A|$ is even and $t$ is odd then no such $z$ exists.

  If we define $f(z)=w+pz^*r$ as in the proof of \cref{lem:recursion}, then $f^k(w)=0$ if $k$ is odd and $f^k(w)=w$ if $k$ is even.  This is independent of the order of $A$, so in the presence of abelian direct factors we see that solutions may not always exist, and even when they do the iterates of $f$ need not converge.
\end{example}

\begin{lem}\label{lem:pna}
  The following are equivalent for a group $G$:
  \begin{enumerate}
    \item $G$ is purely non-abelian.\label{item:1}
    \item For all $p\in\Hom(\widehat{Z(G)},Z(G))$ and $r\in\BCh{G}$ there exists $\delta\in\Aut_c(G)$ such that\label{item:2}
    \[ \delta = 1 + p(\delta\inv)^* r.\]
    \item For all $p\in\Hom(\widehat{Z(G)},Z(G))$ if we define a group homomorphism $T_p\colon \BCh{G}\to\Hom(G,Z(G))$ by $T_p(\beta)=p\beta$; let $\beta_1,...,\beta_m$ be any complete set of (left) coset representatives of $\ker(T_p)$; and define $\delta_k=1+p\beta_k\in\End(G)$ for all $1\leq k\leq m$ then
        \[ \BCh{G} = \bigcup_{k=1}^m \delta_k^*(\beta_k\ker(T_p)).\]\label{item:3}
    \item For all $r\in\BCh{G}$ if we define a group homomorphism $S_r\colon \Hom(\widehat{Z(G)},Z(G))\to \Hom(G,Z(G))$ by $S_r(\gamma)=\gamma r$; let $\gamma_1,...,\gamma_m$ be any complete set of (right) coset representatives of $\ker(S_r)$; and define $\delta_k = 1+\gamma_k r\in\End(G)$ for all $1\leq k \leq m$ then
        \[ \Hom(\widehat{Z(G)},Z(G)) = \bigcup_{k=1}^m (\ker(S_r) \gamma_k)\delta_k^*.\]\label{item:4}
  \end{enumerate}
\end{lem}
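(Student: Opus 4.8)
The plan is to establish the cycle $(1)\Rightarrow(2)\Rightarrow(3)\Rightarrow(1)$ together with $(2)\Rightarrow(4)\Rightarrow(1)$; conditions (3) and (4) are mutually dual coset‑theoretic reformulations of (2), and the arithmetic heart is the equivalence $(1)\Leftrightarrow(2)$. For $(1)\Rightarrow(2)$: given $p$ and $r$, write a prospective $\delta\in\Aut_c(G)$ as $\delta=1+z$ with $z=\delta-1\in\Hom(G,Z(G))$ via \cref{lem:Adney}. Since $G$ is purely non‑abelian, Fitting's Lemma (as used in \cref{lem:finite-comp}) forces $z$ nilpotent, so $\delta\inv=\sum_{k\geq 0}(-z)^{k}$ is a finite sum and $(\delta\inv)^{*}=\sum_{k\geq 0}(-z^{*})^{k}$; substituting, the condition $\delta=1+p(\delta\inv)^{*}r$ becomes the fixed‑point equation
\[ z = pr + \sum_{k\geq 1}(-1)^{k}\,p\,(z^{*})^{k}\,r \]
in $\Hom(G,Z(G))$. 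Replacing $z$ by $-z$ (which commutes with dualization and with the convolution structure) puts this in the exact form of \cref{lem:recursion}, with $w=-pr$, $m=1$, $f_{1}=-p$, $g_{1}=r$, and $t=|\Hom(G,Z(G))|$ (large enough, by \cref{lem:finite-comp}, to absorb all higher terms). That lemma supplies a unique solution $z$; it is nilpotent, so $1+z\in\Aut_c(G)$ by \cref{lem:Adney}, and it satisfies (2) by construction.

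For $(2)\Rightarrow(1)$, argue by contraposition. If $G$ is not purely non‑abelian, decompose $G=C\times K$ with $C\cong\BZ/n$, $n>1$, and choose $p,r$ ``supported on $C$'': take $p$ to be $\widehat{Z(G)}\twoheadrightarrow\widehat{C}\xrightarrow{\,\cong\,}C\hookrightarrow Z(G)$ rescaled by a fixed $e\in\BZ/n$, and $r$ the composite $G\twoheadrightarrow C\xrightarrow{\,\cong\,}\widehat{C}\hookrightarrow\widehat{G}$, where $e$ is chosen so that $X^{2}-X-e$ has no root in $\BZ/n$ (possible for every $n>1$: take $e$ odd when $n$ is even, and, when $n$ is odd, $e$ with $1+4e$ a non‑square mod $n$). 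Then $p(\delta\inv)^{*}r$ takes values in $C$, so projecting the equation onto $K$ gives $\pi_{K}\circ\delta=\pi_{K}$, and since $r$ only reads the $C$‑coordinate, the $K$‑to‑$C$ block also vanishes; hence any solution $\delta$ is block‑diagonal, $\delta=\delta_{CC}\times\id_{K}$, and is an automorphism only if $\delta_{CC}\in\Aut(C)$. Reading off the $C$‑to‑$C$ component of the equation gives $\delta_{CC}=\id_{C}+\bar p\,(\delta_{CC}\inv)^{*}\,\bar r$, i.e.\ $d^{2}-d-e\equiv 0\pmod n$ for the scalar $d$ of $\delta_{CC}$, which has no solution; so (2) fails.

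Finally, $(2)\Rightarrow(3)$ is a direct unwinding: for fixed $p$ and any $r$, (2) gives $\delta\in\Aut_c(G)$ with $\delta=1+p\gamma$ where $\gamma=(\delta\inv)^{*}r\in\BCh{G}$, so $\delta-1=T_{p}(\gamma)=T_{p}(\beta_{k})$ for the representative $\beta_{k}$ of the coset $\gamma+\ker(T_{p})$, whence $\delta=\delta_{k}$, and, using $((\delta\inv)^{*})\inv=\delta^{*}$, $r=\delta_{k}^{*}\gamma\in\delta_{k}^{*}(\beta_{k}\ker(T_{p}))$; thus the union is all of $\BCh{G}$. For $(3)\Rightarrow(1)$ argue contrapositively again with $G=C\times K$, $C\cong\BZ/n$, $n>1$, and $p$ supported on $C$ through an isomorphism $\widehat{C}\cong C$: one checks that $\ker(T_{p})$ is the group of bicharacters trivial on $C$, that the coset representatives may be taken $\widehat{C}$‑valued, and that $\delta_{k}^{*}(\beta_{k}\ker(T_{p}))$ is the $\ker(T_{p})$‑coset determined, inside $\BCh{G}/\ker(T_{p})\cong\Hom(G,\widehat{C})$, by $(1+b_{k})\beta_{k}$, where $b_{k}\in\BZ/n$ is the value of $\beta_{k}$ on a generator of $C$. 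Hence the union exhausts $\BCh{G}$ only if $\{\,b(b+1):b\in\BZ/n\,\}=\BZ/n$, which fails for every $n>1$ (the left side consists of even residues when $n$ is even, and of those $v$ with $1+4v$ a square when $n$ is odd). The equivalences $(2)\Rightarrow(4)\Rightarrow(1)$ follow from the same arguments after transposing throughout: replace $T_{p}$ by $S_{r}$, left cosets by right cosets, and post‑composition by $\delta_{k}^{*}$ by pre‑composition by $\delta_{k}^{*}$.

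The step I expect to be the main obstacle is the passage between (2) and the coset statements (3), (4): one must track carefully the implicit restriction $\widehat{G}\to\widehat{Z(G)}$, the identity $(\delta\inv)^{*}=(\delta^{*})\inv$, and on which side the automorphisms $\delta_{k}^{*}$ act, and — inside $(2)\Rightarrow(1)$ and $(3)\Rightarrow(1)$ — one must dispose of odd‑order abelian direct factors with the quadratic‑residue choice of $e$ rather than the naive parity argument that only handles even $n$.
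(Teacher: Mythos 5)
Your proposal is correct, and its positive directions coincide with the paper's: $(1)\Rightarrow(2)$ is the same reduction via \cref{lem:Adney} and nilpotency to the fixed-point equation solved by \cref{lem:recursion} (your explicit choice of $t=|\Hom(G,Z(G))|$ to absorb the unknown nilpotency degree is, if anything, a cleaner way to avoid the mild circularity the paper glosses over with ``for some $m$''), and your $(2)\Rightarrow(3)$ is the paper's argument verbatim. Where you genuinely diverge is in the negative direction. The paper never proves $(2)\Rightarrow(1)$ directly; it proves $(3)\Rightarrow(1)$ by one targeted computation: with $p(\alpha)=a\inv$ and $r$ projecting onto the cyclic factor, $1+pr$ kills that factor, so $(1+pr)^*r=0$ lands the set $\delta_k^*(\beta_k\ker(T_p))$ inside $\ker(T_p)$, producing an overlap between two sets of the union and finishing by a counting argument. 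You instead compute the full image of the union in $\BCh{G}/\ker(T_p)\cong\Hom(G,\widehat{C})$ and reduce to the non-surjectivity of $b\mapsto b(b+1)$ on $\BZ/n$; this is more work but also explains \emph{why} the paper's $e=-1$ choice works and exactly how badly the covering fails. Your additional direct proof of $(2)\Rightarrow(1)$ via a quadratic-residue choice of $e$ making $X^2-X-e$ rootless is logically redundant in your own scheme (it already follows from $(2)\Rightarrow(3)\Rightarrow(1)$), but it is correct and worth noting: the paper's specific pair $(p,r)$ does \emph{not} refute $(2)$ for all $n$ (e.g.\ $d^2-d+1\equiv 0$ has the unit solution $d=2$ mod $3$), so the careful choice of $e$ you make is genuinely necessary for that shortcut — you correctly identified that the naive parity argument only handles even $n$. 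Both approaches deliver the lemma; the paper's is shorter, yours gives a sharper quantitative picture of the failure of the coset covering.
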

\begin{proof}
  We first show that \cref{item:3} implies \cref{item:1} via the contrapositive.  Suppose that $G=A\times H$ for some non-trivial cyclic group $A=\cyc{a}$.  Let $\widehat{A}=\cyc{\alpha}$. Then there are $r\in\BCh{G},p\in\Hom(\widehat{Z(G)},Z(G))$ defined by $r((a^i,h))=\alpha^i\ot\varepsilon$ for all $h\in H$, and $p(\alpha,\chi)=(a\inv,1)$ for all $\chi\in\widehat{Z(H)}$.  For this $p$ we consider $T_p$.  Without loss of generality we may pick the set of coset representatives to have $\beta_k=r$ for some $k$, so that $r\not\in\ker(T_p)$ and $\delta_k = 1+pr$. By order considerations the union in question is all of $\BCh{G}$ if and only if $\delta_m^*$ is injective on $\beta_m\ker(T_p)$ for all $m$, and the union is a disjoint union. Now by definitions $(1+pr)^* r = r - r = 0$.  Thus $\delta_k^*(\beta_k\ker(T_p))\cap \ker(T_p)\neq \emptyset$, and so the union does not give all of $\BCh{G}$, as desired.

  A similar argument shows that \cref{item:4} implies \cref{item:1}.

  We next show that \cref{item:1} implies \cref{item:2}.  By \cref{lem:Adney} we may write any $\delta\in\Aut_c(G)$ as $\delta=1-z$ for some $z\in\Hom(G,Z(G))$.  Fitting's Lemma implies that any such $z$ is nilpotent.  We may subsequently write $\delta\inv = 1 + \sum_{j=1}^m z^j$ for some $0\leq m\in\BZ$. Then
  \[ \delta = 1+p(\delta\inv)^* r \iff z = -pr + \sum_{j=1}^m -p (z^j)^* r.\]
  An application of \cref{lem:recursion} guarantees a solution $z\in\Hom(G,Z(G))$ exists.  Therefore \cref{item:1} implies \cref{item:2} as desired.

  Suppose next that \cref{item:3} holds.  Then for any $r\in\BCh{G}$ there exists $k$ such that $r\in\delta_k^*(\beta_k \ker(T_p))$.  By the previous case we know that $G$ is purely non-abelian, so that by \cref{lem:Adney} we have $\delta_k\in\Aut_c(G)$.  Moreover,
  \[ p(\delta_k\inv)^* r = p(\delta_k\inv)^*(\delta_k^*(\beta_k+\beta_0)) = p(\beta_k+\beta_0)=p\beta_k\]
  for some $\beta_0\in\ker(T_p)$.  Therefore $\delta_k = 1+ p(\delta_k\inv)^* r$ and \cref{item:2} holds.

  A similar argument shows that \cref{item:4} implies \cref{item:2}.

  We need only show that \cref{item:2} implies \cref{item:3,item:4} to complete the proof.  Fix $p$ and define $T_p,\beta_k,\delta_k$ as in \cref{item:3}.  If \cref{item:2} holds, then given any $r\in\BCh{G}$ and $\delta\in\Aut_c(G)$ satisfying $\delta=1+p(\delta\inv)^*r$ we observe that of necessity $\delta-1=p(\delta\inv)^*r\in\Img(T_p)$, so that in fact $\delta=\delta_k$ for some $k$.  By definitions we must have $r\in \delta_k^*(\beta_k\ker(T_p))$.  Since $r$ was arbitrary, \cref{item:3} holds.  A similar argument shows that \cref{item:2} implies \cref{item:4} as desired, and so completes the proof.
\end{proof}

We will also need the following result, which will allow us to rearrange certain expressions involving multiple characters.
\begin{lem}\label{lem:char-swap}
Let $G$ be a group.  Let $p\in\Hom(\widehat{Z(G)},Z(G))$, $\beta\in\BCh{G}$, and $\lambda,\alpha\in\widehat{Z(G)}$. Then the following hold.
\begin{enumerate}
\item $\lambda(p(\alpha)) = \alpha(p^*(\lambda)).$

\item For all $h\in G$, $\beta(p(\lambda),h) = \lambda((p^*\beta^*)(h))$.
\end{enumerate}
\end{lem}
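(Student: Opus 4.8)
The plan is to prove both identities by unwinding the definitions of the relevant objects, namely the bicharacter $\sigma$ attached to $p$ (via \cref{lem-part:sigma}) and the adjoint $p^*$, and then to reduce everything to the manifest symmetry of $\sigma$ as a bilinear form.

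For part (i), I would first recall that $\lambda,\alpha\in\widehat{Z(G)}$ determine (by restriction) characters of $A$ and $B$, and that $p(\alpha)$ lives in $B$ while $p^*(\lambda)$ lives in $A$. The cleanest route is to use \cref{lem:sum-to-ele}: applying it with $\chi$ chosen so that $\lambda_\chi=\lambda$ and $\eta$ chosen so that $\lambda_\eta=\alpha$ (which is possible since $\lambda$ and $\alpha$ are linear characters of $Z(G)$, hence their own $\lambda$-rescalings), we get
\[
 \sum_{a\in A}\lambda(p(e_a))\alpha(a\inv) = \lambda(p(\alpha)\inv) = \alpha(p^*(\lambda)\inv).
\]
Since $\lambda$ and $\alpha$ are group homomorphisms, $\lambda(p(\alpha)\inv)=\lambda(p(\alpha))\inv$ and $\alpha(p^*(\lambda)\inv)=\alpha(p^*(\lambda))\inv$, and inverting both sides of the equality gives exactly $\lambda(p(\alpha))=\alpha(p^*(\lambda))$. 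Alternatively one can give a direct computation: write $p(\alpha)=\frac{1}{|A|}\sum_{a,b}\sigma(a,b)\alpha(a)b$ from the formula in \cref{lem-part:sigma} and the identity $p(e_a)=\frac{1}{|A|}\sum_b\sigma(a,b)b$, apply $\lambda$, interchange the roles of the two sums to recognize $p^*(\lambda)$, and use that $p^*$ is governed by the transposed bicharacter $(a,b)\mapsto\sigma(a,b)$ read the other way; the symmetry is then visible because $\sigma$ appears symmetrically in $\frac{1}{|A|}\sum_{a,b}\sigma(a,b)\alpha(a)\lambda(b)$.

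For part (ii), the key observation is that $\beta\in\BCh{G}=\Hom(G,\widehat{G})$, so $\beta^*\in\Hom(\widehat{G},G)$ after the canonical identifications, and $\beta$ pairs an element of $Z(G)$ (namely $p(\lambda)$) against $h\in G$; composing $p$ with $\beta$ (restricted appropriately to $Z(G)$, following the convention of omitting $\Res_{Z(G)}^G$) gives a morphism whose adjoint is $p^*\beta^*$. Concretely, I would expand $\beta(p(\lambda),h)$ using the bicharacter formula for $p(\lambda)$, obtaining $\frac{1}{|A|}\sum_{a,b}\sigma(a,b)\lambda(a)\beta(b,h)=\frac{1}{|A|}\sum_{a,b}\sigma(a,b)\lambda(a)\beta^*(h)(b)$; then recognizing $\frac{1}{|A|}\sum_b\sigma(a,b)\beta^*(h)(b)$ as (a character value of) $p(\beta^*(h))$ paired against... — more cleanly, recognize the whole expression as $\lambda$ applied to $p^*$ composed with $\beta^*$ evaluated at $h$, which is precisely $\lambda((p^*\beta^*)(h))$. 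This is really just part (i) with $\alpha$ replaced by the character $\beta(\cdot,h)\in\widehat{Z(G)}$ (using $\beta(\cdot,h)|_{Z(G)}$), since $\beta(p(\lambda),h)=\bigl(\beta(\cdot,h)\bigr)(p(\lambda)) = \lambda\bigl(p^*(\beta(\cdot,h))\bigr)$ by part (i), and $p^*(\beta(\cdot,h))=(p^*\beta^*)(h)$ by definition of the adjoint of a composite.

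The only real obstacle is bookkeeping: keeping straight which characters are being restricted to $Z(G)$, $A$, or $B$, and making sure the identification $\beta(\cdot,h)\in\widehat{G}$ restricts to the element of $\widehat{Z(G)}$ that $p^*$ expects as input, so that $p^*(\beta(\cdot,h)) = (p^*\beta^*)(h)$ is literally correct under the stated conventions (the paper's convention to "omit obvious restriction morphisms," including $\Res_{Z(G)}^G$, makes this clean). Once part (i) is in hand, part (ii) is a one-line consequence, so I would structure the write-up as: prove (i) via \cref{lem:sum-to-ele} (or the direct $\sigma$-symmetry computation), then deduce (ii) by specializing $\alpha$ to $\beta(\cdot,h)$ and invoking the adjoint-of-a-composite identity.
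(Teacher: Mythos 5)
Your proposal is correct, and it is essentially the same argument as the paper's, whose entire proof is the one-line remark that both identities follow by expanding definitions in the standard basis of $\du{G}$ --- i.e.\ exactly the adjoint-pairing computation you describe, with (i) being the defining property of $p^*$ and (ii) being (i) applied to the character $\beta(\cdot,h)|_{Z(G)}$. Your routing of (i) through \cref{lem:sum-to-ele} (then inverting) is a slightly more structured packaging of the same unwinding and involves no circularity, since that lemma is established before this one.
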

\begin{proof}
  Both identities follow by simply expanding the definitions in the standard basis of $\du{G}$.
\end{proof}

We can now consider the invariance of $\Rep(\D(G))$ under braid-gauging when $G$ is purely non-abelian.
\begin{thm}\label{thm:chiral-convert}
Let $G$ be a purely non-abelian group, and let $R=\cmorph$ be an $E$-chiral quasitriangular structure of $\D(G)$. Then there exists a braided tensor equivalence
\[\Rep(\D(G),R)\to\Rep(\D(G),R_E).\]
\end{thm}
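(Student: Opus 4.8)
The plan is to realize the desired braided tensor equivalence as the composite of two constructions: first, an equivalence that strips $R$ down to a braiding of the form $(1,r,p,z)R_E$, obtained by pushing the "central" parts $z_u,z_v$ of $u,v$ into a twist; and second, an equivalence that eliminates the remaining bicentral gauging $(1,r,p,z)$ by means of a suitable Hopf-algebra automorphism of $\D(G)$, using the fixed-point result \cref{lem:recursion} (or, equivalently, \cref{lem:pna}). The underlying functor in both cases will be the identity on objects (or a relabeling via an automorphism of $\D(G)$ fixing the grading), with a nontrivial tensor structure; because the functor does not move simple objects, it automatically preserves the fusion rules, pivotal structure, and all the link invariants of \cref{sec:links}, so the only real content is checking compatibility with the braiding.

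\textbf{Step 1: Absorb $z_u,z_v$ into a gauge/twist.} By \cref{prop:qts-gauge} we may write $R=(z_u,r,p,z_v)R_E$ with $z_u,z_v\in\Hom(G,Z(G))$, and by \cref{thm:weak-gauge} the bicentral weak $R$-matrix $(z_u,r,p,z_v)$ corresponds to an explicit braid gauging $b$ on $\U(\C)=G/G'\times\widehat{Z(G)}$. The point is that $b$ need not be symmetric, but its "antisymmetrization" $b(X,Y)b(Y,X)\inv$ is what obstructs it being realized by a twist/tensor-structure on the identity functor. I would first reduce to the case where $r$ is replaced by a symmetric bicharacter and the remaining data is controlled: using \cref{thm:mod-gauge} one sees that only the symmetrized gauging affects the $S$-matrix, and one constructs a monoidal natural transformation of the identity functor on $\Rep(\D(G))$ implementing the symmetric part of $b$ (this is the standard fact that a symmetric bicharacter on the grading group gives a tensor autoequivalence intertwining $c$ and $c^b$; cf.\ the twist by an element of $\Hom(\U(\C)\otimes\U(\C),\k^\times)$). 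This collapses $R$ to something equivalent to $(1,r',p',0)R_E$ with $r'$ now carrying the antisymmetric residue.

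\textbf{Step 2: Kill the residual bicharacter via a double automorphism.} The remaining braiding $(1,r',p',0)R_E$ differs from $R_E$ by a bicentral gauging whose antisymmetric part is essentially $r'$ and $p'$. Here I would invoke \cref{lem:pna}: for the given $p'$ and $r'$ there is $\delta\in\Aut_c(G)$ with $\delta=1+p'(\delta\inv)^* r'$. Such a $\delta$ induces a Hopf-algebra automorphism of $\D(G)$ (acting as $\delta$ on the group part and as $(\delta\inv)^*$ on the dual part, twisted appropriately), and pulling back the braiding $R_E$ along this automorphism yields exactly $(1,r',p',0)R_E$ — this is precisely the computation in \citep[Section 11]{K3:QTS} reformulated, and it is the role of the fixed-point equation. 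Thus the automorphism of $\D(G)$ gives a strict braided tensor equivalence $\Rep(\D(G),(1,r',p',0)R_E)\to\Rep(\D(G),R_E)$. Composing with Step 1 gives the theorem.

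\textbf{Main obstacle.} The hard part will be Step 2: producing the automorphism of $\D(G)$ from $\delta$ and verifying that conjugation by it sends $R_E$ to the target quasitriangular structure with the correct $r',p'$ rather than some twisted version. This requires the full strength of \cref{lem:recursion}/\cref{lem:pna} — the solvability of the nonlinear fixed-point equation is exactly what makes purely non-abelian the right hypothesis — together with a careful bookkeeping of how $\Aut_c(G)$ acts on the pieces $(u,r,p,v)$ under the formula \cref{eq:R-def}. I expect the bicharacter-swap identities of \cref{lem:char-swap} to be exactly the tool needed to match up the two sides of that computation, and the tensor-structure constraint (the hexagon) to follow formally once the braiding is matched, since $\D(G)$ is semisimple and the functor is an equivalence on underlying categories.
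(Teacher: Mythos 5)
Your overall flavor (an automorphism of $\D(G)$ combined with a tensor structure given by a braid gauging, with the purely non-abelian hypothesis entering through the fixed-point \cref{lem:recursion}/\cref{lem:pna}) is the right one, but both steps of your factorization have genuine problems. In Step 1 the key claim is backwards: if $F=\id$ and $J_{X,Y}=j(\deg X,\deg Y)\id$ is a scalar tensor structure, the braided-functor condition $F(c_{X,Y})\circ J_{X,Y}=J_{Y,X}\circ c'_{X,Y}$ forces $c'_{X,Y}=\tfrac{j(X,Y)}{j(Y,X)}c_{X,Y}$, so the gaugings realizable on the identity functor are exactly the ``coboundaries'' $b(X,Y)=j(X,Y)j(Y,X)^{-1}$, which satisfy $b(X,X)=1$. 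It is therefore the \emph{symmetric} part of the gauging --- precisely the part that changes the $T$-matrix by \cref{thm:mod-gauge} --- that obstructs realization by a twist on the identity, not the antisymmetrization; your proposed reduction of $(z_u,r,p,z_v)R_E$ to $(1,r',p',0)R_E$ by a monoidal natural transformation of the identity is unjustified. In Step 2 the automorphism you describe (acting as $\delta$ on the group part and $(\delta^{-1})^*$ on the dual part) is ``diagonal'' and fixes $R_0$, and likewise essentially fixes $R_E$ since $\delta\in\Aut_c(G)$ commutes with $\pi_E,\pi_{E^c}$; pulling $R_E$ back along it cannot create the components $r'$ and $p'$. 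To move $r$ and $p$ one needs an automorphism of $\D(G)$ with off-diagonal components ($\beta\in\BCh{G}$ and $\gamma\in\Hom(\widehat{Z(G)},Z(G))$), and even then a strict equivalence coming from an automorphism alone does not exist in general.

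The paper's proof does not factor as you propose: it takes a single functor $(\Psi,J)$ where $\Psi$ is induced by a full automorphism $\psi$ of $\D(G)$ with matrix components $(\alpha,\beta,\gamma,\delta)$ and $J=(w,\tilde r,\tilde p,z)$ is itself a braid gauging serving as the tensor structure, and then imposes the braided-functor diagram of \citep[Equation (8.5)]{EGNO}. This produces \emph{coupled} equations (\cref{eq:p-rel}, \cref{eq:r-rel}, \cref{eq:wz-rel-1}, \cref{eq:wz-rel-2}) relating $\tilde p,\gamma$, then $\tilde r,\beta,\alpha$, then $w+z$, $z_u,z_v$, and $\alpha^*-1$, which must be solved simultaneously; this is where \cref{lem:recursion} and \cref{lem:finite-comp} (and \cref{lem:Adney}, to keep $\alpha^*\in\Aut_c(G)$) are used, with the decomposition $G=G_E\times G_{E^c}$ controlling the block structure. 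Your fixed-point equation $\delta=1+p(\delta^{-1})^*r$ from \cref{lem:pna} is indeed the special case $E^c=\emptyset$, $z_R=0$, as the paper notes, but in general the automorphism and the gauging cannot be chosen independently of one another, so the clean two-step decoupling you envisage would need a separate argument that is not supplied and is unlikely to hold as stated.
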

\begin{proof}
  Let $G,R$ be as in the statement.  We proceed by considering a fairly general candidate for a braided tensor equivalence, and specialize along the way to construct a specific example.  While we could simply start with the specific equivalence, deriving the equation in more generality has the benefit of providing insights into the more general form of such equivalences, as well as the reasons for the particular example we single out.

  Consider \[ \psi=\begin{pmatrix} \alpha&\beta\\
            \gamma & \delta \end{pmatrix} \in \Aut(\D(G)).\]
  By \citep[Theorem 15.3]{K:Twisted} and its proof we have the following irreducible objects and corresponding data in $\Rep(\D(G))$
  \begin{gather*}
    (g,\chi) = (\{g_i\}, \{\delta(s_i)\},\rho),\\
    (h,\eta) = (\{h_k\}, \{\delta(t_k)\},\zeta),\\
    (\alpha^*(g),\delta^*\chi) = (\{\alpha^*(g_i)\},\{s_i\},\rho\circ\delta),\\
    (\alpha^*(h),\delta^*\eta) = (\{\alpha^*(h_k)\},\{t_k\},\zeta\circ\delta),\\
    (\gamma^*(\lambda_\chi),\beta^*(g)) = (\gamma^*(\lambda_\chi),1,\beta^*(g)),\\
    (\gamma^*(\lambda_\eta),\beta^*(h)) = (\gamma^*(\lambda_\eta),1,\beta^*(h)),
  \end{gather*}
  such that if $\Psi$ is the tensor autoequivalence induced by $\psi$ then $\Psi$ is given on irreducibles (up to natural transformations) by
  \[ \Psi(g,\chi) = (\alpha^*(g),\delta^*\chi)\ot (\gamma^*(\lambda_\chi),\beta^*(g))\]
  and on vectors $\ket{\delta(s_i),m}\in(g,\chi)$ by
  \[ \ket{\delta(s_i),m}\mapsto\ket{s_i,m}\ot\ket{1,1}.\]
  The definition on vectors in particular tells us how $\Psi$ acts on morphisms.  To emphasize that the representation underlying $\ket{s_i,m}$ in the above is $\rho\circ\delta$, we will instead write
  \[ \ket{\delta(s_i),m}\mapsto\ket{s_i,m^\delta}\ot\ket{1,1}.\]

  Our goal is to find a choice of $\Psi$ and a tensor structure $J$, as in \citep[Definition 2.4.1]{EGNO}, such that $(\Psi,J)$ becomes a braided tensor autoequivalence $\Rep(\D(G),R)\to \Rep(\D(G),R_E)$, as defined by \citep[Definition 8.1.7]{EGNO}.  We first observe that if $J=(w,\tilde{r},\tilde{p},z)$ is a braid gauging, then in fact $J$ satisfies the axioms to be a tensor structure.  Our claim will be that exactly such a choice of $J$ will suffice.  So we need to compute the horizontal rows in \citep[Equation (8.5)]{EGNO}.

  We first explicitly compute the braiding $(g,\chi)\ot(h,\eta)\to (h,\eta)\ot (g,\chi)$ given by $R$.  For $\ket{\delta(s_i),m}\in(g,\chi)$ and $\ket{\delta(t_k),n}\in(h,\eta)$, by \cref{thm:weak-gauge} this is given by
  \begin{align*}
    &\ket{\delta(s_i),m}\ot\ket{\delta(t_k),n}\mapsto R\inv \ket{\delta(t_k),n}\ot\ket{\delta(s_i),m}\\
    &= \lambda_\chi(p(\lambda_\eta)\inv z_v(h))\lambda_\eta(z_u(g\inv))r(h\inv,g)R_E\inv\cdot(\ket{\delta(t_k),n}\ot\ket{\delta(s_i),m}).
  \end{align*}

  So we need to compute the action of $R_E\inv$, which is given by
  \begin{align*}
    \Big(\sum_{x,y\in G}& e_{x\inv}\#\pi_E(y\inv)\ot e_y\# \pi_{E^c}(x\inv)\Big)\cdot(\ket{\delta(t_k),n}\ot\ket{\delta(s_i),m})\\
    &= \Big(\sum_{{\mathclap{\substack{x,y\in G\\j,l\\\pi_E(y\inv)\delta(t_k) = \delta(t_l)\tilde{y}\\\pi_{E^c}(x\inv)\delta(s_i)=\delta(s_j)\tilde{x}\\x\inv = h_l\\y = g_j}}}} e_{x\inv}\#\pi_E(y\inv)\ot e_y\# \pi_{E^c}(x\inv)\Big)\cdot(\ket{\delta(t_k),n}\ot\ket{\delta(s_i),m})\\
    &= \Big(\sum_{{\mathclap{\substack{j,l\\\pi_E(g_j\inv)\delta(t_k) = \delta(t_l) \tilde{y}\\\pi_{E^c}(h_l)\delta(s_i)=\delta(s_j) \tilde{x}}}}} e_{h_l}\#\pi_E(g_j\inv)\ot e_{g_j}\# \pi_{E^c}(h_l)\Big)\cdot(\ket{t_k,n}\ot\ket{s_i,m})\\
    &= \sum_{\substack{j,l\\\pi_E(g_j\inv)\delta(t_k) = \delta(t_l) \tilde{y}\\\pi_{E^c}(h_l)\delta(s_i)=\delta(s_j) \tilde{x}}} \ket{\delta(t_l),\tilde{y}\cdot n}\ot \ket{\delta(s_j),\tilde{x}\cdot m}.
  \end{align*}
  Applying $\Psi$ we find that the bottom row of \citep[Equation (8.5)]{EGNO} is given by
  \begin{align}\label{eq:bottom-EGNO}
  \begin{split}
     (&\ket{s_i,m^\delta}\ot\ket{1,1})\ot(\ket{t_k,n^\delta}\ot\ket{1,1})\\
     & \mapsto \lambda_\chi(p(\lambda_\eta)\inv z_v(h))\lambda_\eta(z_u(g\inv))r(h\inv,g)\\
     &\qquad{}\sum_{{\mathclap{\substack{j,l\\\pi_E(g_j\inv)\delta(t_k) = \delta(t_l) \tilde{y}\\\pi_{E^c}(h_l)\delta(s_i)=\delta(s_j) \tilde{x}}}}}
        (\ket{t_l,(\tilde{y}\cdot n)^\delta}\ot\ket{1,1})\ot(\ket{s_j,(\tilde{x}\cdot m)^\delta}\ot\ket{1,1}).
  \end{split}
  \end{align}

  To find the top row of \citep[Equation (8.5)]{EGNO} we compute the action of $R_E\inv$ on $\Psi(h,\eta)\ot\Psi(g,\chi)$ as
  \begin{align*}
    \Big(&\sum_{x,y\in G} e_{x\inv}\#\pi_E(y\inv)\ot e_y\# \pi_{E^c}(x\inv)\Big)\\
    &\qquad{}\cdot(\ket{t_k,n^\delta}\ot\ket{1,1})\ot(\ket{s_i,m^\delta}\ot\ket{1,1})\\
    &=\Big(\sum_{{\mathclap{\substack{x,y\in G\\j,l\\\pi_E(y\inv)t_k=t_l y'\\\pi_{E^c}(x\inv)s_i = s_j x'\\x\inv=\alpha^*(h_l) \gamma^*(\lambda_\eta)\\y=\alpha^*(g_j)\gamma^*(\lambda_\chi)}}}}
        e_{x\inv}\#\pi_E(y\inv)\ot e_y\# \pi_{E^c}(x\inv)\Big)(\ket{t_k,n^\delta}\ot\ket{1,1})\ot(\ket{s_i,m^\delta}\ot\ket{1,1})\\
    &=\beta(\pi_E(\alpha^*(g)\gamma^*(\lambda_\chi))\inv,h) \beta(\pi_{E^c}(\alpha^*(h)\gamma^*(\lambda_\eta)),g)\\
    &\qquad{}\Big(\sum_{{\mathclap{\substack{j,l\\\pi_E(\alpha^*(g_j\inv)\gamma^*(\lambda_\chi)\inv) t_k = t_l y'\\\pi_{E^c}(\alpha^*(h_l)\gamma^*(\lambda_\eta))s_i= s_j x'}}}}
        (\ket{t_l,y'\cdot n^\delta}\ot \ket{1,1}) \ot ( \ket{s_j, x'\cdot m^\delta}\ot\ket{1,1})\Big)
  \end{align*}
  Therefore the top row of \citep[Equation (8.5)]{EGNO} is given by
  \begin{align}\label{eq:top-EGNO}
    \begin{split}
        (&\ket{s_i,m^\delta}\ot\ket{1,1})\ot(\ket{t_k,n^\delta}\ot\ket{1,1})\\
        &\mapsto
      \beta(\pi_E(\alpha^*(g)\gamma^*(\lambda_\chi))\inv,h) \beta(\pi_{E^c}(\alpha^*(h)\gamma^*(\lambda_\eta)),g)\\
    &\qquad{}\Big(\sum_{{\mathclap{\substack{j,l\\\pi_E(\alpha^*(g_j\inv)\gamma^*(\lambda_\chi)\inv) t_k = t_l y'\\\pi_{E^c}(\alpha^*(h_l)\gamma^*(\lambda_\eta))s_i= s_j x'}}}}
        (\ket{t_l,y'\cdot n^\delta}\ot \ket{1,1}) \ot ( \ket{s_j, x'\cdot m^\delta}\ot\ket{1,1})\Big).
    \end{split}
  \end{align}

  We now wish to compare the summations in \cref{eq:bottom-EGNO,eq:top-EGNO}, so as to decide exactly which choice of $\psi$ leaves these two expressions differing only by a scalar multiple.  The summation in \cref{eq:top-EGNO} involves the identities
  \begin{align*}
    \pi_E(\alpha^*(g_j\inv)\gamma^*(\lambda_\chi)\inv) t_k &= t_l y',\\
    \pi_{E^c}(\alpha^*(h_l)\gamma^*(\lambda_\eta))s_i &= s_j x'
  \end{align*}
  Applying $\delta$ to these we have
  \begin{align*}
    \delta\left(\pi_E(\alpha^*(g_j\inv)\gamma^*(\lambda_\chi)\inv)\right) \delta(t_k) &= \delta(t_l) \delta(y'),\\
    \delta\left(\pi_{E^c}(\alpha^*(h_l)\gamma^*(\lambda_\eta))\right)\delta(s_i) &= \delta(s_j) \delta(x').
  \end{align*}
  Since $\gamma^*$ has central image and all of $\pi_E,\pi_{E^c},\delta$ send central elements to central elements, these are equivalent to
  \begin{align*}
    \delta(\pi_E(\alpha^*(g_j)))\inv \delta(t_k) &= \delta(t_l)\delta(y'\pi_E(\gamma^*(\lambda_\chi))),\\
    \delta(\pi_{E^c}(\alpha^*(h_l)))\delta(s_i) &= \delta(s_j)\delta(x'\pi_{E^c}(\gamma^*(\lambda_\eta))\inv).
  \end{align*}

  Comparing this with \cref{eq:bottom-EGNO} we see we are concerned with relating $\pi_E\alpha^*$ to $\pi_E$, and $\pi_{E^c}\alpha^*$ to $\pi_{E^c}$.  We recall that $\delta\alpha^*$ always defines an element of $\Aut_c(G)$.  So consider the case where in fact $\alpha^*\in\Aut_c(G)$, so that $\alpha^*=1+z_\alpha$ for some $z_\alpha\in\Hom(G,Z(G))$.  Then we may define $z_E,z_{E^c}\in \Hom(G,Z(G))$ by $\pi_E\alpha^* = \pi_E+z_E$ and $\pi_{E^c}\alpha^*=\pi_{E^c}+z_{E^c}$. Equivalently,
  \begin{align}
    z_E=\pi_E z_\alpha,\\
    z_{E^c}=\pi_{E^c}z_\alpha.
  \end{align}.
  Therefore the identities in the summation appearing in \cref{eq:top-EGNO} can be written as
  \begin{align}
    \pi_E(g_j)\inv \delta(t_k) &= \delta(t_l)\delta(y'\pi_E(\gamma^*(\lambda_\chi))z_E(g_j)),\label{eq:top-1}\\
    \pi_{E^c}(h_l)\delta(s_i) &= \delta(s_j)\delta(x'\pi_{E^c}(\gamma^*(\lambda_\eta))\inv z_{E^c}(h_l\inv)).\label{eq:top-2}
  \end{align}

  Since $G$ is purely non-abelian, $\delta\in\Aut(G)$, and in fact $\delta\in\Aut_c(G)$ since $\alpha^*\in\Aut_c(G)$.  Since $z_E,z_{E^c}$ are class functions we can then rewrite \cref{eq:top-EGNO} as
  \begin{align}\label{eq:top-EGNO-2}
    \begin{split}
        (&\ket{s_i,m^\delta}\ot\ket{1,1})\ot(\ket{t_k,n^\delta}\ot\ket{1,1})\\
        &\mapsto
      \beta(\pi_E(\alpha^*(g)\gamma^*(\lambda_\chi))\inv,h) \beta(\pi_{E^c}(\alpha^*(h)\gamma^*(\lambda_\eta)),g)\\
      &\qquad{}\lambda_\eta(\pi_E(\gamma^*(\lambda_\chi))\inv z_E(g)\inv)\lambda_\chi(\pi_{E^c}(\gamma^*(\lambda_\eta)) z_{E^c}(h))\\
    &\qquad{}\Big(\sum_{{\mathclap{\substack{j,l\\\pi_E(g_j\inv)\delta(t_k) = \delta(t_l) \tilde{y}\\\pi_{E^c}(h_l)\delta(s_i)=\delta(s_j) \tilde{x}}}}}
        (\ket{t_l,(\tilde{y}\cdot n)^\delta}\ot \ket{1,1}) \ot ( \ket{s_j, (\tilde{x}\cdot m)^\delta}\ot\ket{1,1})\Big).
    \end{split}
  \end{align}
  This is the same map as \cref{eq:bottom-EGNO} up to a scalar multiple, as desired.

  We next proceed to determine the form $J=(w,\tilde{r},\tilde{p},z)$ must have to make the diagram in \citep[Equation (8.5)]{EGNO} commute.  By \cref{thm:weak-gauge,lem:char-swap} we have
  \begin{gather}
    J((g,\chi),(h,\eta)) = \tilde{r}(h\inv,g)\lambda_\chi(\tilde{p}(\lambda_\eta)\inv z(h))\lambda_\eta(w(g\inv)),\label{eq:J-XY}\\
    J((h,\eta),(g,\chi)) = \tilde{r}(g\inv,h)\lambda_\eta(\tilde{p}(\lambda_\chi)\inv z(g))\lambda_\chi(z(h\inv)),\label{eq:J-YX}\\
    \frac{J((g,\chi),(h,\eta))}{J((h,\eta),(g,\chi))} = \frac{\tilde{r}(g,h) \lambda_\chi((w+z)(h)\tilde{p}^*(\lambda_\eta))}{ \tilde{r}(h,g) \lambda_\eta((w+z)(g)\tilde{p}^*(\lambda_\chi))}.\label{eq:J-rat}
  \end{gather}

  The commutativity of the diagram in \citep[Equation (8.5)]{EGNO} is then equivalent to the equality of
  \begin{align}\label{eq:comm-1a}
  \begin{split}
    \tilde{r}(g,h)&r(h,g)\beta(\pi_{E^c}(\alpha^*(h) \gamma^*(\lambda_\eta)),g)\\
    &\lambda_\chi((w+z+z_{E^c}-z_v)(h)(\tilde{p}^*+\pi_{E^c}\gamma^*)(\lambda_\eta))
  \end{split}
  \end{align}
  and
  \begin{align}\label{eq:comm-2a}
  \begin{split}
    \tilde{r}(h,g)&\beta(\pi_E(\alpha^*(g)\gamma^*(\lambda_\chi)),h)\\
    &\lambda_\eta((w+z+z_{E}-z_u)(g) (\tilde{p}^*-p^*+\pi_E\gamma^*)(\lambda_\chi))
  \end{split}
  \end{align}
  for all $g,h\in G$, $\chi\in\Irr(C_G(g))$, and $\eta\in\Irr(C_G(h))$.

  Taking $g=h=1$ in \cref{eq:comm-1a,eq:comm-2a} and applying \cref{lem:char-swap} we find
  \begin{align*}
    \lambda_\chi((\tilde{p}^*+\pi_{E^c}\gamma^*)(\lambda_\eta)) &= \lambda_\eta((\tilde{p}^*-p^*+\pi_E\gamma^*)(\lambda_\chi))\\
    &= \lambda_\chi((\tilde{p}-p+\gamma\pi_E^*)(\lambda_\eta)).
  \end{align*}
  Since $\chi,\eta\in\Irr(G)$ are arbitrary and the morphisms in question all have central image, this is equivalent to
  \begin{equation}\label{eq:p-rel}
    p+(\tilde{p}^*-\tilde{p}) = \gamma \pi_E^*-\pi_{E^c}\gamma^*.
  \end{equation}

  For the remainder of the proof, we decompose morphisms in terms of their components relation to the decomposition $G=H_1\times H_2$ where $H_1=G_E$ and $H_2=G_{E^c}$.  For example,
  \[ p_{2,1} = \pi_{E^c} p \pi_E^* \in\Hom(\widehat{Z(H_1)},Z(H_2)).\]

  Returning to \cref{eq:p-rel} we find
  \begin{equation}\label{eq:gamma-id}
    \gamma \pi_E^*-\pi_{E^c}\gamma^* = \begin{pmatrix}
      \gamma_{1,1} & 0\\
      \gamma_{2,1}-(\gamma_{1,2})^* & -\gamma_{2,2}^*
    \end{pmatrix}
  \end{equation}
  It will be convenient later if we arrange $\gamma$ to be (upper) triangular.  So by defining
  \begin{align}
  \begin{split}\label{def:tilde-p}
    \tilde{p} &= \begin{pmatrix}
      0 & p_{1,2}\\
      0 & 0
    \end{pmatrix} =\pi_{E} p \pi_{E^c}^*,
  \end{split}\\
  \begin{split}\label{def:gamma}
    \gamma &= \begin{pmatrix}
      p_{1,1} & -p_{1,2}-(p_{2,1})^*\\
      0 & -p_{2,2}^*
    \end{pmatrix}
  \end{split}
  \end{align}
  we have a solution to \cref{eq:p-rel} for any given $p$.

  Now setting $\eta,\chi$ to be the trivial characters in \cref{eq:comm-1a,eq:comm-2a} we have
  \begin{equation*}
    \tilde{r}(g,h)r(h,g) \beta(\pi_{E^c}(\alpha^*(h)),g) = \tilde{r}(h,g) \beta(\pi_E(\alpha^*(g)),h)
  \end{equation*}
  for all $g,h\in G$.  This is equivalent to
  \begin{equation}\label{eq:r-rel}
    r+(\tilde{r}^*-\tilde{r}) = \beta \pi_{E^c}\alpha^* - \alpha \pi_{E}^* \beta^*  .
  \end{equation}
  Writing $\beta,\alpha$ in components in much the same fashion as in the previous case, we find
  \begin{align}\label{eq:beta-id}
  \begin{split}
    \beta&\pi_{E^c}\alpha^* - \alpha \pi_{E}^* \beta^*\\
    &= \begin{pmatrix}
      \beta_{1,2}(\alpha_{2,1})^* - \alpha_{1,1}\beta_{1,1}^* & \beta_{1,2}\alpha_{2,2}^* - \alpha_{1,1} (\beta_{2,1})^*\\
      \beta_{2,2}(\alpha_{2,1})^* - \alpha_{1,2}\beta_{1,1}^* & \beta_{2,2}\alpha_{2,2}^* - \alpha_{2,1}(\beta_{1,2})^*
    \end{pmatrix}.
  \end{split}
  \end{align}

  We observe that the lower left corner in \cref{eq:beta-id} is the only entry which does not involve $\alpha_{E,E}$ or $\alpha_{E^c,E^c}$.  This is problematic as we need to be able to solve for all entries of $\beta$, and the condition $\alpha\in\Aut_c(G)$ forces $\alpha_{E,E},\alpha_{E^c,E^c}$ to be isomorphisms, and also forces that $\alpha_{E,E^c}$ and $\alpha_{E^c,E}$ will not be invertible. The morphism $\tilde{r}$ allows us to fix this, in part by observing that the upper right corner of \cref{eq:beta-id} involves both $\alpha_{E,E}$ and $\alpha_{E^c,E^c}$.  So define
  \begin{equation}\label{eq:r-tilde}
    \tilde{r} = \begin{pmatrix}
      0 & \alpha_{1,1} (\beta_{2,1})^*\\
      0 & 0
    \end{pmatrix}.
  \end{equation}
  Then applying \cref{eq:beta-id} to \cref{eq:r-rel} we have
  \begin{multline}\label{eq:r-beta-id}
    \begin{pmatrix}
      r_{1,1} & r_{1,2}\\
      r_{2,1} & r_{2,2}
    \end{pmatrix}\\ =
    \begin{pmatrix}
      \beta_{1,2}(\alpha_{2,1})^* - \alpha_{1,1} \beta_{1,1}^* & \beta_{1,2}\alpha_{2,2}^*\\
      \beta_{2,2}(\alpha_{1,2})^* - \alpha_{2,1}\beta_{1,1}^* - \beta_{2,1}\alpha_{1,1}^* & \beta_{2,2}\alpha_{2,2}^* - \alpha_{2,1}(\beta_{2,1})^*
    \end{pmatrix}.
  \end{multline}
  Then any $\beta$ satisfying the above relationship for a given $r$ and $\alpha\in\Aut_c(G)$ satisfies
  \begin{align}
    \beta_{1,2} &= r_{1,2}(\alpha_{2,2}^*)\inv,\label{eq:beta-2-1}\\
    \beta_{1,1} &= (\alpha_{1,1}\inv (r_{1,2}(\alpha_{2,2}^*)\inv (\alpha_{2,1})^*-r_{1,1}))^*,\label{eq:beta-1-1}\\
    \beta_{2,2} &= r_{2,2} + \alpha_{2,1} (\beta_{2,1})^*,\label{eq:beta-2-2}
  \end{align}
  \begin{align}\label{eq:beta-1-2}
    \begin{split}
    \beta_{2,1} &= \Big(\beta_{2,2}(\alpha_{1,2})^*-\alpha_{2,1}(\alpha_{1,1}\inv(r_{1,2}(\alpha_{2,2}^*)\inv (\alpha_{2,1})^*\\
    &\qquad{}\qquad{}-r_{1,1})-r_{2,1}) \Big) (\alpha_{1,1}^*)\inv,
    \end{split}
  \end{align}
  The first two identities completely determine $\beta_{1,1}$ and $\beta_{1,2}$ once we know $\alpha$.  Substituting \cref{eq:beta-2-2} into \cref{eq:beta-1-2} we obtain an identity for $\beta_{2,1}$ in terms of itself.  By recursively replacing appearances of $\beta_{2,1}$ we may apply \cref{lem:finite-comp} to obtain a formula for $\beta_{2,1}$ that depends only on $r,\alpha$.  We can then apply this to \cref{eq:beta-2-2} to solve for $\beta_{2,2}$ entirely in terms of $r,\alpha$.  This completely determines $\beta$, provided we can exhibit a suitable choice of $\alpha\in\Aut_c(G)$.

  Next, \cref{eq:r-rel,eq:p-rel} can be used to simplify the equality of \cref{eq:comm-1a,eq:comm-2a} to yield
  \begin{align}\label{eq:comm-simp}
  \begin{split}
    \beta(\pi_{E^c}(\gamma^*(\lambda_\eta)),g)& \lambda_\chi((w+z+z_{E^c}-z_v)(h))\\
    & = \beta(\pi_E(\gamma^*(\lambda_\chi)),h) \lambda_\eta((w+z+z_E-z_u)(g))
  \end{split}
  \end{align}
  for all $g,h\in G$, $\chi\in\Irr(C_G(g))$, and $\eta\in\Irr(C_G(h))$. Note that this identity implicitly involves $\alpha$, as $z_E,z_{E^c}$ are defined in terms of $\alpha$, and furthermore any solution for $\beta$ from the previous case will be defined using $\alpha$.

  Taking $g=1,\eta=\varepsilon$ in \cref{eq:comm-simp} and applying \cref{lem:char-swap} we have
  \begin{align*}
    \lambda_\chi((w+z+z_{E^c}-z_v)(h)) &= \beta(\pi_E(\gamma^*(\lambda_\chi)),h)\\
    &= \lambda_\chi((\gamma \pi_E^*\beta^*)(h)).
  \end{align*}
  Since $\chi,h$ are arbitrary and the terms $\lambda_\chi$ is evaluated at in the above are all central, this is equivalent to
  \begin{equation}\label{eq:wz-rel-1}
    w+z+z_{E^c}-z_v = \gamma\pi_E^*\beta^*.
  \end{equation}
  Similarly, taking $h=1,\chi=\varepsilon$ in \cref{eq:comm-simp} we find
  \begin{equation}\label{eq:wz-rel-2}
    w+z+z_E-z_u = \gamma \pi_{E^c}^*\beta^*
  \end{equation}
  We need only solve these two equations, consistently with the previous cases, to completely decide the equality of \cref{eq:comm-1a,eq:comm-2a}.

  From \cref{def:gamma} we have
  \begin{align*}
    \gamma \pi_E^* &= \begin{pmatrix}
      p_{1,1}&0\\
      0&0
    \end{pmatrix},\\
  \gamma \pi_{E^c}^* &= \begin{pmatrix}
     0 & -p_{1,2}-(p_{2,1})^*\\
     0 & -p_{2,2}^*
  \end{pmatrix}.
  \end{align*}
  Therefore
  \begin{gather}
    \gamma\pi_E^*\beta^* = \begin{pmatrix}
      p_{1,1} \beta_{1,1}^* & p_{1,1}(\beta_{2,1})^*\\
      0 & 0
    \end{pmatrix},\label{eq:gamma-beta-1}\\
    \gamma\pi_{E^c}^*\beta^* = \begin{pmatrix}
      -(p_{1,2}+(p_{2,1})^*)(\beta_{1,2})^* & -(p_{1,2}+(p_{2,1})^*)\beta_{2,2}^*\\
      -p_{2,2}^*(\beta_{1,2})^* & -(p_{2,1})^*\beta_{2,2}^*
    \end{pmatrix}.\label{eq:gamma-beta-2}
  \end{gather}

  We define and write morphisms $f,\tilde{v},\tilde{u},\tilde{\alpha}=\alpha^*-1\in\Hom(G,Z(G))$ by
  \begin{align*}
    f &= z+w,\\
    z_v &= \begin{pmatrix}
      \tilde{v}_{1,1} & \tilde{v}_{1,2}\\
      \tilde{v}_{2,1} & \tilde{v}_{2,2}
    \end{pmatrix},\\
    z_u &= \begin{pmatrix}
      \tilde{u}_{1,1} & \tilde{u}_{1,2}\\
      \tilde{u}_{2,1} & \tilde{u}_{2,2}
    \end{pmatrix},\\
    z_E &= \begin{pmatrix}
      \tilde{\alpha}_{1,1} & \tilde{\alpha}_{1,2}\\
      0 & 0
    \end{pmatrix},\\
    z_{E^c} &= \begin{pmatrix}
      0 & 0\\
      \tilde{\alpha}_{2,1} & \tilde{\alpha}_{2,2}
    \end{pmatrix}.
  \end{align*}
  Note that $\tilde{\alpha}_{2,1}=(\alpha_{1,2})^*$ and $\tilde{\alpha}_{1,2}=(\alpha_{2,1})^*$.  \Cref{eq:wz-rel-1,eq:wz-rel-2} are then equivalent to the following eight identities
  \begin{align}
    \tilde{\alpha}_{2,1} &= \tilde{v}_{2,1}-f_{2,1},\label{eq:t-alpha-12}\\
    \tilde{\alpha}_{2,2} &= \tilde{v}_{2,2}-f_{2,2},\label{eq:t-alpha-22}\\
    \tilde{\alpha}_{1,1} &= \tilde{u}_{1,1}-f_{1,1} -(p_{1,2}+(p_{2,1})^*)(\beta_{1,2})^*,\label{eq:t-alpha-11}\\
    \tilde{\alpha}_{1,2} &= \tilde{u}_{1,2}-f_{1,2} -(p_{1,2}+(p_{2,1})^*)\beta_{2,2}^*,\label{eq:t-alpha-21}\\
    f_{1,1} &= p_{1,1}\beta_{1,1}^*+\tilde{v}_{1,1},\label{eq:f-11}\\
    f_{1,2} &= p_{1,1}(\beta_{2,1})^*+\tilde{v}_{1,2},\label{eq:f-21}\\
    f_{2,1} &= \tilde{u}_{2,1}-p_{2,2}^*(\beta_{1,2})^*,\label{eq:f-12}\\
    f_{2,2} &= \tilde{u}_{2,2}-(p_{2,1})^*\beta_{2,2}^*.\label{eq:f-22}
  \end{align}
  Observe that any solution to these equations indeed forces $\tilde{\alpha}\in\Hom(G,Z(G))$, and so by \cref{lem:Adney} $\alpha^*=1+\tilde{\alpha}\in\Aut_c(G)$, which is consistent with our assumptions on $\alpha$ in previous steps.  So we need only show that there are $\tilde{\alpha},f\in\Hom(G,Z(G))$ satisfying these equations.

  Now by \cref{eq:t-alpha-12,eq:f-12,eq:beta-2-1} we have
  \begin{align}\label{eq:t-alpha-12-a}
  \begin{split}
    \tilde{\alpha}_{2,1} &= \tilde{u}_{2,1}-\tilde{v}_{2,1}+p_{2,2}^*(\beta_{1,2})^*\\
    &= \tilde{u}_{2,1}-\tilde{v}_{2,1}+p_{2,2}^* \alpha_{2,2}\inv r_{2,1}\\
    &= \tilde{u}_{2,1}-\tilde{v}_{2,1}+p_{2,2}^*(1+\tilde{\alpha}_{2,2}^*)\inv r_{2,1}.
  \end{split}
  \end{align}
  Thus $\tilde{\alpha}_{2,1}$ is solved in terms of the unknown $\tilde{\alpha}_{2,2}$, and the known morphisms $\tilde{u},\tilde{v},p,r$.

  Similarly, \cref{eq:t-alpha-21,eq:f-21,eq:beta-1-2,eq:t-alpha-12-a,lem:recursion} solves for $\tilde{\alpha}_{1,2}$ in term of the unknown morphisms $\tilde{\alpha}_{2,2},\tilde{\alpha}_{1,1}$.  We can then use  \cref{eq:t-alpha-11,eq:f-11,eq:beta-1-1,lem:recursion} to solve for $\tilde{\alpha}_{1,1}$ in terms of the unknown morphism $\tilde{\alpha}_{2,2}$.  Then we can use \cref{eq:t-alpha-22,eq:f-22,eq:beta-2-2,lem:recursion} to solve for $\tilde{\alpha}_{2,2}$ entirely in terms of the known morphisms.  Substituting this back into the previous expressions allows us to solve for the remaining components of $\tilde{\alpha}$ entirely in terms of the known morphisms.  The morphism $f$ is then defined by \cref{eq:f-11,eq:f-21,eq:f-12,eq:f-22}, and we are free to pick any $z,w\in\Hom(G,Z(G))$ such that $f=z+w$ (such as $z=f$ and $w=0$).

  All combined we have found choices of $\psi,J$ making $(\Psi,J)$ into a braided tensor equivalence $\Rep(\D(G),R)\to\Rep(\D(G),R_E)$, as desired. This completes the proof.
\end{proof}
Indeed, the proof is constructive, and in principle with \cref{lem:recursion} the example equivalence can be made fully explicit in a finite number of steps.  The proof needed that $G$ was purely non-abelian to ensure that we could solve for the components of $\tilde{\alpha}$, and one component of $\beta$, by a finite number of successive replacements.  The proof otherwise works for a more arbitrary group provided we take care that the required iterative replacement arguments remain valid, and that $\alpha^*$ remains a central automorphism.

\begin{example}
  In the special case when $R$ is left-handed, meaning $E^c=\emptyset$, the equations for $\tilde{\alpha}$ reduce to
  \begin{equation*}
    \alpha^* - 1 = z_R + p\alpha\inv r,
  \end{equation*}
  When $z_R$ is trivial, this equation appears in \cref{lem:pna}.  Deciding when this equation admitted solutions, and so the special case where $E^c=\emptyset$, was the motivation for how to solve the general case. The case $E=\emptyset$, when $R$ is right-handed, is similar.
\end{example}

\begin{thm}
  For any group $G$ and $E\subseteq\{1,...,n\}$ there is a braided tensor equivalence $\Rep(\D(G),R_E)\to\Rep(\D(G),R_0)$.
\end{thm}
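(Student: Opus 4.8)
The idea is to strip off the chiral tensor factors of $\D(G)$ and then recognize the two standard quasitriangular structures on a single factor as giving a braided category and its reverse. For the reduction, recall from the excerpt the canonical isomorphism of quasitriangular Hopf algebras $(\D(G),R_E)\cong(\D(G_E),R_0)\ot(\D(G_{E^c}),R_1)$. Since $G=G_E\times G_{E^c}$ and $\D$ carries a direct product to the tensor product of the corresponding doubles with the standard structures, we likewise have $(\D(G),R_0)\cong(\D(G_E),R_0)\ot(\D(G_{E^c}),R_0)$. For quasitriangular Hopf algebras $A,B$ there is a braided tensor equivalence $\Rep(A\ot B)\simeq\Rep(A)\boxtimes\Rep(B)$, and Deligne products of braided tensor equivalences are again braided tensor equivalences. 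Hence it suffices to produce, for every finite group $K$ (in fact only $K=G_{E^c}$ is needed), a braided tensor equivalence $\Rep(\D(K),R_1)\to\Rep(\D(K),R_0)$.

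For this core step, identify $\Rep(\D(K),R_0)$ with the Drinfeld center $\Z(\VecG_K)$ as braided tensor categories (the remark following \cref{def:DG-basis}); if instead $R_1$ corresponds to the canonical center braiding, interchange the two roles throughout, which affects nothing below. Using the paper's braiding convention $v\ot w\mapsto R^{-1}\cdot(w\ot v)$ together with the identity $R_1=\tau(R_0^{-1})$ of \cref{df:standard}, a short computation shows that $R_1$ is precisely the quasitriangular structure whose braiding is the reverse of the one coming from $R_0$; that is, $\Rep(\D(K),R_1)$ is $\Rep(\D(K),R_0)^{\mathrm{rev}}$, the same monoidal category with the inverse braiding. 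Now combine two standard facts: for any tensor category $\C$ there is a canonical braided tensor equivalence $\Z(\C)^{\mathrm{rev}}\simeq\Z(\C^{\mathrm{rev}})$ (see \citep{EGNO}); and, the associator of $\VecG_K$ being trivial, $\VecG_K^{\mathrm{rev}}$ is just $\VecG_K$ with the product of $K$ reversed, so the isomorphism $g\mapsto g^{-1}$ from $K$-with-reversed-product onto $K$ induces a monoidal equivalence $\VecG_K^{\mathrm{rev}}\simeq\VecG_K$ and hence, by functoriality of the center, a braided tensor equivalence $\Z(\VecG_K^{\mathrm{rev}})\simeq\Z(\VecG_K)$. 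Chaining these,
\[ \Rep(\D(K),R_1)\simeq\Z(\VecG_K)^{\mathrm{rev}}\simeq\Z(\VecG_K^{\mathrm{rev}})\simeq\Z(\VecG_K)\simeq\Rep(\D(K),R_0), \]
which together with the reduction proves the theorem.

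The main obstacle is bookkeeping rather than mathematics: one must follow the paper's non-standard braiding convention closely enough to be certain that $R_1$ gives the \emph{reverse} of the $R_0$-braiding (and not the $R_0$-braiding itself), and one must pin down the direction of the equivalence $\Z(\C)^{\mathrm{rev}}\simeq\Z(\C^{\mathrm{rev}})$. If a purely Hopf-algebraic, self-contained argument is wanted instead, observe that no Hopf-algebra automorphism $\psi$ of $\D(K)$ can satisfy $(\psi\ot\psi)(R_0)=R_1$ — the first tensor legs of $(\psi\ot\psi)(R_0)$ are grouplike while those of $R_1$ are not — so one must take a nontrivial underlying functor corrected by a suitable braid-gauging tensor structure, in the spirit of the proof of \cref{thm:chiral-convert}; this is feasible but more laborious, and requires no nilpotency hypothesis since only the standard structures are involved.
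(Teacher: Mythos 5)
Your argument is correct, and its skeleton is the same as the paper's: factor $(\D(G),R_E)\cong(\D(G_E),R_0)\ot(\D(G_{E^c}),R_1)$, pass to Deligne products to reduce to a braided equivalence $\Rep(\D(K),R_1)\to\Rep(\D(K),R_0)$, identify the $R_1$-braiding as the reverse of the $R_0$-braiding, and then invoke $\Z(\C^{\op})\simeq\Z(\C)^{\text{rev}}$ for the center description (you also checked the braiding convention carefully, which the paper leaves implicit). The one place you genuinely diverge is in showing that $\Z(\VecG_K)^{\text{rev}}\simeq\Z(\VecG_K)$: the paper routes through the categorical Morita equivalence of $\VecG_K$ with $\Rep(K)$ and the fact that $\Rep(K)$, being symmetric, is monoidally equivalent to its tensor-opposite, whereas you use the inversion map $g\mapsto g^{-1}$ to produce a direct monoidal equivalence $\VecG_K^{\op}\simeq\VecG_K$ (legitimate here precisely because the associator is trivial; with a nontrivial $3$-cocycle this step would need the cocycle condition checked). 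Your version is more elementary and self-contained, avoiding the Morita-theoretic input; the paper's version is more robust, since it works verbatim for any fusion category Morita equivalent to a symmetric one and does not depend on the vanishing of the associator. Your closing observation that no Hopf algebra automorphism of $\D(K)$ carries $R_0$ to $R_1$, so a nontrivial tensor structure is unavoidable, is a nice sanity check not made explicit in the paper.
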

\begin{proof}
  We have a braided tensor equivalence $\Rep(\D(G))\cong \Z(\VecG_G)$.  Furthermore, $\VecG_G$ is categorically Morita equivalent to $\Rep(G)$, which is equivalent to there being a braided tensor equivalence $\Z(\VecG_G)\cong \Z(\Rep(G))$.  Now $\Rep(G)$ admits a symmetric braiding, so there is also a braided tensor equivalence $\Z(\Rep(G)^{\op})\cong \Z(\Rep(G))$.  We always have a braided tensor equivalence $\Z(\C^{\op})\cong\Z(\C)^{\text{rev}}$. Therefore we have braided tensor equivalences $\Rep(\D(G),R_0)\cong \Rep(\D(G),R_0)^{\text{rev}} \cong \Rep(\D(G),R_1)$.

  More generally we have that $\D(G)=\D(G_E)\ot \D(G_{E^c})$, and there is an isomorphism of quasitriangular Hopf algebras $(\D(G),R_E)\to (\D(G_E),R_0)\ot (\D(G_{E^c}),R_1)$.  This isomorphism therefore yields a braided tensor equivalence $\Rep(\D(G),R_E) \cong \Rep(\D(G_E),R_0)\boxtimes \Rep(\D(G_{E^c}),R_1)$. The desired result then follows by applying the previous case.
\end{proof}

\begin{cor}\label{cor:one-structure}
  A group $G$ is purely non-abelian if and only if the pivotal category $\Z(\VecG_G)$ admits exactly one structure of a braided category, up to braided tensor equivalence.
\end{cor}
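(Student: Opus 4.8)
The plan is to deduce both directions from results already established, the real work having been done in \cref{thm:chiral-convert}. For the setup, recall the standard braided monoidal equivalence $\Z(\VecG_G)\simeq\Rep(\D(G),R_0)$, and recall that for a finite-dimensional Hopf algebra $H$ the braidings of the tensor category $\Rep(H)$ correspond bijectively to the quasitriangular structures of $H$. Consequently, transporting a braiding on $\Z(\VecG_G)$ along the equivalence with $\Rep(\D(G))$ and applying this correspondence together with \cref{thm:qts} (see also \cref{rem:mine-is-subset}), \emph{every} braiding of $\Z(\VecG_G)$ is braided tensor equivalent to $\Rep(\D(G),R)$ for some quasitriangular structure $R=\cmorph$ of $\D(G)$.

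For the forward implication, assume $G$ is purely non-abelian and let $c$ be any braiding of $\Z(\VecG_G)$. By the previous paragraph $(\Z(\VecG_G),c)$ is braided tensor equivalent to $\Rep(\D(G),R)$ for some $R=\cmorph$; let $E$ be its chirality. Then \cref{thm:chiral-convert} supplies a braided tensor equivalence $\Rep(\D(G),R)\to\Rep(\D(G),R_E)$, and the theorem preceding this corollary supplies a braided tensor equivalence $\Rep(\D(G),R_E)\to\Rep(\D(G),R_0)$. Composing these, $(\Z(\VecG_G),c)$ is braided tensor equivalent to $\Rep(\D(G),R_0)$. As this holds for every braiding $c$, and $R_0$ provides at least one braided structure, $\Z(\VecG_G)$ has exactly one structure of a braided category up to braided tensor equivalence.

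For the converse I argue by contraposition: suppose $G$ is not purely non-abelian, so $G_0\neq1$. Under the Hopf algebra isomorphism $\D(G)\cong\D(G_0)\ot\D(G_1\times\cdots\times G_n)$, the cocommutative Hopf algebra $\D(G_0)$ carries the trivial quasitriangular structure $1\ot 1$, which together with the standard structure $R_0$ of $\D(G_1\times\cdots\times G_n)$ gives a quasitriangular structure $R'$ of $\D(G)$ with
\[ \Rep(\D(G),R')\;\cong\;\Rep(\D(G_0),1\ot 1)\boxtimes\Rep(\D(G_1\times\cdots\times G_n),R_0). \]
The first factor is symmetric with $|G_0|^2>1$ simple objects, so the Müger center of $\Rep(\D(G),R')$ is non-trivial, whereas the Müger center of $\Rep(\D(G),R_0)\cong\Z(\VecG_G)$ is trivial since Drinfeld centers are non-degenerate. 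Triviality of the Müger center is invariant under braided tensor equivalence, so $\Rep(\D(G),R')$ and $\Rep(\D(G),R_0)$ give inequivalent braided structures on $\Z(\VecG_G)$; hence there is more than one. (Alternatively one may quote \cref{thm:is-modular} directly to produce the degenerate braiding.)

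The only substantive ingredient is \cref{thm:chiral-convert}, which is already proved, so I expect no real obstacle here; the remainder is bookkeeping with the Hopf-algebraic dictionary and standard facts about Drinfeld centers. The one point requiring care is that the equivalences furnished by \cref{thm:chiral-convert} and the preceding theorem are of braided \emph{tensor} categories and need not a priori respect the pivotal structures, so the whole argument—and in particular the invariant used in the converse—must be phrased purely at the level of braided tensor categories.
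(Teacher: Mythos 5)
Your proposal is correct and follows essentially the same route the paper intends: the corollary is stated without a separate proof precisely because it is the composite of \cref{thm:chiral-convert} with the theorem immediately preceding it (giving uniqueness for purely non-abelian $G$), together with the observation, already made around \cref{thm:is-modular} and \cref{sec:modular}, that a non-trivial abelian direct factor yields a degenerate (symmetric-on-a-factor) braiding inequivalent to the non-degenerate one from $R_0$. Your explicit handling of the Hopf-algebraic dictionary, the M\"uger-center invariant, and the caveat about pivotal versus braided tensor equivalences just fills in the bookkeeping the paper leaves implicit.
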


\section{Application to Frobenius-Schur indicators}\label{sec:indicators}
We now show how to use braid gaugings to obtain new identities and relations for the indicators.  We consider only purely non-abelian groups when discussing $\Rep(\D(G))$ in this section, so that every braid gauging preserves the modularity property.

Fix a modular fusion category $\C$ with braiding $c$, basis $\{X_1,...,X_m\}$, and $\Gamma=\{1,...,m\}$ the label set.  We let $N_{i,k}^j = \dim \C(X_i\ot X_k,X_j)$ for all $i,j,k\in\Gamma$ denote the fusion rules.  We also let $d_i$ be the pivotal dimension of $X_i$ for all $i\in\Gamma$.  The fusion rules and pivotal dimensions do not depend on the braiding, and so are independent of braid gauging.  We let $T,S$ denote the modular data for $\C$, as usual.

The indicators can be computed in terms of the fusion rules and the modular data via the following result, which we call the Bantay-Ng-Schauenburg (BNS) formula.
\begin{thm}[{\citep[Theorem 7.5]{NS07a}}]\label{thm:BNS}
    Let $\C$ be a modular fusion category. For any $j\in\Gamma$ and $m\in\BN$ we have
    \[ \nu_m(X_j) = \frac{1}{\dim(\C)} \sum_{i,k\in\Gamma} N_{i,k}^j d_i d_k \Big(\frac{T_i}{T_k}\Big)^m.\]
\end{thm}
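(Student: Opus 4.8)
The plan is to reduce this to \citep[Theorem 7.5]{NS07a}, whose argument I sketch. The starting point is that, in a spherical fusion category, the higher Frobenius--Schur indicator $\nu_m(X_j)$ is defined \emph{without} reference to any braiding: it is the categorical trace of the canonical endomorphism of $\C(\mathbf{1},X_j^{\ot m})$ obtained by cyclically rotating the tensorands, the leftmost copy of $X_j$ being transported around to the rightmost position via the pivotal structure. In particular $\nu_m(X_j)$ is unchanged by braid gauging, which is exactly why it is a useful source of identities in this section, since the right-hand side above visibly \emph{does} depend on the braiding through the $T$-matrix. For the proof one simply fixes the given braiding $c$ of $\C$, so that $\C$ is modular with modular data $S,T$, and computes $\nu_m(X_j)$ using $c$.

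The first step is a diagrammatic rewriting: resolving a basis of $\C(\mathbf{1},X_j^{\ot m})$ along a fusion tree and dualising via sphericity, $\nu_m(X_j)$ becomes the pivotal trace of a single closed string diagram built from $m$ copies of $X_j$ threaded around a circle with the cyclic shift linking consecutive strands. The crucial step --- and the one I expect to be the main obstacle --- is to simplify this diagram using the non-degeneracy of $S$. Concretely, one inserts the Kirby colour $\Omega=\sum_{i\in\Gamma}d_iX_i$ and applies its encircling (``killing'') property, namely that an $\Omega$-labelled loop around a simple strand $X_\ell$ acts as $\dim(\C)$ times the projection onto the trivial object; this identity holds precisely because $S$ is invertible, and it reorganises the closed diagram into independent loops coloured by simple objects. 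This is the only place modularity enters essentially: for a symmetric category such as $\Rep(G)$ this step fails and the stated formula is false, so any proof must invoke non-degeneracy of $S$ here.

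Carrying out the bookkeeping, the reorganised diagram is a sum, over pairs of simple objects $X_i,X_k$ whose tensor product contains $X_j$ with multiplicity $N_{i,k}^j$, of the pivotal trace of a theta-graph-type diagram: two loops coloured $X_i$ and $X_k$ meeting along an $X_j$-strand, contributing the quantum dimensions $d_i$ and $d_k$ from the caps together with a net full twist of opposite signs on the two loops contributing the scalar $\theta_{X_i}^{\,m}\theta_{X_k}^{-m}$, where $\theta_X$ is the eigenvalue of the ribbon element $\nu$ on $X$, so $\theta_X=T_X^{-1}$. Dividing by the $\dim(\C)$ produced by $\Omega$, and using $N_{i,k}^j=N_{k,i}^j$ to rewrite $\theta_{X_i}^{\,m}\theta_{X_k}^{-m}$ as $(T_i/T_k)^m$, yields the asserted formula. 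An alternative, less diagrammatic route --- closer to Bantay's original derivation in rational conformal field theory --- combines the $\mathrm{SL}_2(\BZ)$-equivariance of the family $\{\nu_m\}_{m\in\BN}$ with the Verlinde formula to expand $N_{i,k}^j$ in terms of $S$, reducing the identity to one among the Gauss-type sums $\sum_i d_i S_{i,\ell}\theta_i^{\pm m}$; again non-degeneracy of $S$ is exactly what makes these sums well behaved.
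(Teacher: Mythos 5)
There is nothing in the paper to compare against: Theorem~\ref{thm:BNS} is quoted verbatim from \citep[Theorem 7.5]{NS07a} and the paper offers no proof of it, so your opening move --- reduce to the cited result --- is exactly what the paper does, and for the purposes of this paper that is all that is required. Your framing remarks are also accurate and consistent with the paper's conventions: $\nu_m$ is defined pivotally, hence is blind to braid gauging (this is precisely why \cref{thm:indicator-gauge} works); non-degeneracy of $S$ is genuinely essential (the formula fails for symmetric categories such as $\Rep(G)$ with trivial braiding); and since the paper's $T_X$ is the scalar by which $\nu\inv$ acts, your identification $\theta_X=T_X\inv$ together with $N_{i,k}^j=N_{k,i}^j$ correctly reconciles the twist ratio with the stated $(T_i/T_k)^m$.

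Where your sketch should be treated with caution is that it is not the argument Ng and Schauenburg actually give, and its central step is asserted rather than derived. The proof in \citep{NS07a} proceeds by relating $\nu_m(X_j)$ to the $m$-th power of the ribbon twist on the induced object $I(X_j)$ in the Drinfeld center, and then using the factorization $\Z(\C)\simeq \C\boxtimes\C^{\mathrm{rev}}$ available for modular $\C$, which decomposes $I(X_j)$ into summands $X_i\boxtimes X_k$ with multiplicity $N_{i,k}^j$ and twist eigenvalue $\theta_i\theta_k\inv$; the formula then drops out by taking a trace. Your Kirby-colour/surgery route is a plausible alternative in spirit, but the passage you label ``carrying out the bookkeeping'' --- from the $\Omega$-encircled rotation-trace diagram to a sum of theta-type graphs weighted by $N_{i,k}^j\, d_i d_k\,\theta_i^{m}\theta_k^{-m}$ --- is exactly where all the content lies, and as written it is a claim, not a proof; in particular it is not justified why the cyclic rotation on $\C(\mathbf{1},X_j^{\ot m})$ converts into opposite $m$-fold twists on the two loops. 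If you intend this as a self-contained proof rather than a pointer to the literature, that step (or the alternative Bantay/Verlinde route you mention, which likewise needs the $\mathrm{SL}_2(\BZ)$-equivariance of the equivariant indicators proved in the Ng--Schauenburg framework) must be carried out in detail.
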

As the categorical dimension $\dim(\C)$ is also independent of the braiding, and the values $\nu_m(X_j)$ depend only on the pivotal structure, the only terms in the BNS formula that depend on the braiding are the $T$-matrix entries.  By varying the braiding, such as by braid gauging, we can therefore take linear combinations of BNS formulas to obtain new formulas and dependencies.

\begin{thm}\label{thm:indicator-gauge}
    Let $\C$ be a braided spherical fusion category, and suppose $b$ is a braid gauging such that $\cyc{b}$ is modular.  Then for any $j\in\Gamma$, $m\in\BN$, and $\mu\in U(1)$
    \begin{equation*}
        \sum_{\substack{i,k\in \Gamma\\ b(X_i,X_i)^m = \mu b(X_k,X_k)^m}} N_{i,k}^j d_i d_k \Big( \frac{T_i}{T_k}\Big)^m = \delta_{1,\mu} \dim(\C)\nu_m(X_j)
    \end{equation*}
\end{thm}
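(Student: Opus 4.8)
The plan is to apply the BNS formula (\cref{thm:BNS}) for the braiding $c^{b^k}$ obtained by iterating the braid gauging $b$, sum the resulting identities over $k$, and extract the stated equation by an orthogonality-of-characters argument. Concretely, fix $j\in\Gamma$ and $m\in\BN$. Since $\cyc{b}$ is assumed modular, for each integer $k$ the category $(\C,c^{b^k})$ is a modular fusion category with the same underlying pivotal (spherical) structure, hence the same fusion rules $N_{i,k}^j$, pivotal dimensions $d_i$, categorical dimension $\dim(\C)$, and indicators $\nu_m(X_j)$; only the $T$-matrix changes. By \cref{thm:mod-gauge} applied $k$ times (equivalently, to the braid gauging $b^k$), the $T$-matrix of $(\C,c^{b^k})$ has entries $b(X_i,X_i)^{-k}T_i$. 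Substituting into \cref{thm:BNS} gives, for every $k\in\BZ$,
\[
  \dim(\C)\nu_m(X_j) = \sum_{i,l\in\Gamma} N_{i,l}^j\, d_i d_l \Big(\frac{T_i}{T_l}\Big)^m\, b(X_i,X_i)^{-km}\, b(X_l,X_l)^{km}.
\]
(Here I have renamed the second summation index to $l$ to avoid collision with the exponent $k$; in the final statement I will rename it back to $k$ to match the paper.)

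The next step is to regard the right-hand side as a function of $k$ and isolate the contribution where $b(X_i,X_i)^m = \mu\, b(X_l,X_l)^m$ for a fixed $\mu\in U(1)$. Write $\zeta_{i,l} = b(X_l,X_l)^m / b(X_i,X_i)^m \in U(1)$, so that the summand carries the factor $\zeta_{i,l}^{k}$ and the identity reads $\dim(\C)\nu_m(X_j) = \sum_{i,l} c_{i,l}\,\zeta_{i,l}^{k}$ with $c_{i,l} = N_{i,l}^j d_i d_l (T_i/T_l)^m$ independent of $k$. All the $\zeta_{i,l}$ are roots of unity of order dividing some fixed $N$ (since $b(X,X)$ takes values in the finite group generated by the finitely many $b(X_i,X_i)$, and modularity of $\cyc b$ forces these to be roots of unity — alternatively one just notes $\U(\C)$ is finite, so $b$ has finite order). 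Averaging the displayed identity over $k = 0,1,\dots,N-1$ against $\mu^{-k}$ and using $\frac1N\sum_{k=0}^{N-1} (\zeta_{i,l}\mu^{-1})^{k} = \delta_{\zeta_{i,l},\mu}$ kills every term except those with $\zeta_{i,l} = \mu$, i.e. $b(X_i,X_i)^m = \mu\, b(X_l,X_l)^m$, and on the left produces $\frac1N\sum_{k=0}^{N-1}\mu^{-k}\dim(\C)\nu_m(X_j) = \delta_{1,\mu}\dim(\C)\nu_m(X_j)$. This is exactly the claimed formula.

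The routine parts are the substitution into BNS and the character averaging. The one point requiring a little care — and the step I'd expect to be the mild obstacle — is justifying that $b$ has finite order as a braid gauging, so that only finitely many distinct braidings $c^{b^k}$ occur and the averaging is legitimate; this follows because the braid gaugings form a group isomorphic to $\BCh{\U(\C)}$ with $\U(\C)$ a finite abelian group (\cref{thm:bicental-as-gaugings} and the discussion after \cref{def:gauge}), hence a finite group, so each element has finite order. One should also remark that $\cyc b$ being modular is what lets us invoke \cref{thm:BNS} for each $c^{b^k}$ in the first place; without it the intermediate categories need not be modular and the BNS formula would not apply. With these observations in place the proof is complete.
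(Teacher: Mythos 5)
Your proposal is correct and follows essentially the same route as the paper: gauge by the powers $b^k$, apply the BNS formula to each resulting modular braiding, and extract the stated identity by a finite Fourier/orthogonality average over $k$, using finiteness of $\U(\C)$ to bound the orders of the ratios $b(X_i,X_i)^m/b(X_k,X_k)^m$. Two small points to tidy: the orthogonality identities you invoke, $\tfrac1N\sum_{k=0}^{N-1}(\zeta_{i,l}\mu^{-1})^{k}=\delta_{\zeta_{i,l},\mu}$ and $\tfrac1N\sum_{k=0}^{N-1}\mu^{-k}=\delta_{1,\mu}$, are only valid when $\mu^N=1$, so for $\mu\in U(1)$ not a root of unity of order dividing $N$ you must instead observe (as the paper does in one sentence) that the condition $b(X_i,X_i)^m=\mu\, b(X_k,X_k)^m$ is then unsatisfiable, making both sides zero; and your averaging isolates the terms with $b(X_l,X_l)^m=\mu\, b(X_i,X_i)^m$, which is the theorem with $\mu$ replaced by $\mu^{-1}$---harmless, since the statement ranges over all $\mu$ and $\delta_{1,\mu}=\delta_{1,\mu^{-1}}$, but worth a remark.
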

\begin{proof}
  Let notation and assumptions be as in the statement. Since the braid gaugings are a group, for every $s\in\BZ$ we can apply \cref{thm:BNS} and \cref{thm:mod-gauge} using the $T$-matrix $T^{b^s}$ obtained from gauging $c$ by $b^s$ to obtain
  \begin{equation}\label{eq:BNS-1}
    \nu_m(X_j) = \frac{1}{\dim(\C)} \sum_{i,k\in\Gamma} N_{i,k}^j d_i d_k \Big(\frac{b^{sm}(X_k,X_k)}{b^{sm}(X_i,X_i)}\Big)\Big(\frac{T_i}{T_k}\Big)^m.
  \end{equation}

  Since $\U(\C)$ is finite, we let $\varpi$ be any primitive $\exp(\Img(b))$-th root of unity. Then we can rewrite \cref{eq:BNS-1} as
  \begin{equation}\label{eq:BNS-2}
    \nu_m(X_j) = \frac{1}{\dim(\C)} \sum_{t=0}^{\exp(\Img(b))-1} \sum_{\substack{i,k\in\Gamma\\b(X_k,X_k)^m = \varpi^{t} b(X_i,X_i)^m}} N_{i,k}^j d_i d_k  \varpi^{ts}\Big(\frac{T_i}{T_k}\Big)^m.
  \end{equation}
  Summing these equations over all $0\leq s< \exp(\Img(b))$ then gives the desired equation in the special case $\mu=1$.  Taking linear combinations of \cref{eq:BNS-2} using powers of $\varpi$ gives the desired equation when $\mu$ is any power of $\varpi$. For any other value of $\mu$ the identity $\mu b(X_i,X_i)^m = b(X_k,X_k)^m$ cannot hold for any $i,k\in\Gamma$, so the desired equation is trivially true.

  This completes the proof.
\end{proof}

We demonstrate a number of uses (and non-uses) for this using $\C=\Rep(\D(G))$ for various choices of purely non-abelian groups $G$ and braid gaugings.

\begin{example}
  When $G$ is indecomposable, centerless, and perfect then $R_0$ and $R_1$ are the only quasitriangular structures of $\D(G)$.  If we remove the indecomposable property, then the decomposition into indecomposables is unique (up to the ordering of factors) and the only quasitriangular structures are the standard $E$-chiral structures. In particular, there are no non-trivial braid gaugings.
\end{example}
\begin{example}\label{ex:inverse-form}
By subtracting and adding the indicator formulas obtained from $R_0$ and $R_1$, we have
  \begin{gather*}
    |G|^2\nu_m(X_j) = \sum_{i,k\in\Gamma} N_{i,k}^j d_i d_k \Real((T_i/T_k)^m),\\
    0 = \sum_{i,k\in\Gamma} N_{i,k}^j d_i d_k \operatorname{Im}((T_i/T_k)^m).
  \end{gather*}
\end{example}

\begin{example}\label{ex:Sn-bichar}
  Let $G=S_k$ for $k\geq 3$.  Let $r\in\BCh{S_k}\cong\BZ_2$ be the non-trivial bicharacter given by $r(g,h)=-1$ if both $g,h$ are odd permutations and $r(g,h)=1$ otherwise.  Let $3\leq m\in\BN$ be odd. As before, write $X_i=(t_i,\phi_i)$.  From the gauging $b=(0,r,0,0)$ and \cref{thm:indicator-gauge} we have
  \begin{align}\label{eq:Sn-odd-indic}
    \nu_m(X_j) &= \frac{1}{|G|^2} \sum_{\substack{i,l\in\Gamma\\\sgn(t_i t_l)=1}} N_{il}^j d_i d_l \Big(\frac{T_i}{T_l}\Big)^m,
  \end{align}
  and
  \begin{align*}
    \sum_{\substack{i,l\in\Gamma\\\sgn(t_i t_l)=-1}} N_{il}^j d_i d_l \Big(\frac{T_i}{T_l}\Big)^m = 0.
  \end{align*}
  We therefore obtain new information regarding the fusion rules between irreducibles with different $\sgn$ values.

  If instead we had $m$ even, then since $b(X,X)^2=1$ for all $X$ we obtain no new information.
\end{example}

The previous example gives an alternative proof to the following result of Courter.

\begin{prop}[{\citep[Proposition 3.4.1]{Co}}]
  Let $(g,\chi)$ be an irreducible $\D(S_n)$-module.  If $g$ is an odd permutation and $m\in\BN$ is odd then $\nu_m(g,\chi)=0$.
\end{prop}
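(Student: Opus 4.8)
The plan is to read the vanishing off from \cref{eq:Sn-odd-indic} in \cref{ex:Sn-bichar} by a short grading argument. The cases $n\leq 2$ are trivial: $S_1$ has no odd permutation, and for $n=2$ one checks the two relevant higher indicators of $\D(S_2)=\D(\BZ_2)$ (the toric code) by hand. So assume $n\geq 3$; then $S_n$ is purely non-abelian, so by \cref{thm:is-modular} every braid gauging of $\Rep(\D(S_n))$ is modular and \cref{ex:Sn-bichar} applies verbatim. Write the given irreducible as $X_j=(t_j,\phi_j)$ with $t_j=g$, and fix an odd $m\in\BN$.

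First I would pin down the degrees in $\C=\Rep(\D(S_n))$. Since $Z(S_n)=1$ and $S_n'=A_n$, the universal grading group is $\U(\C)=S_n/A_n\times\widehat{Z(S_n)}\cong S_n/A_n$, and $\deg(t_i,\phi_i)=(t_iA_n,\lambda_{\phi_i})$ reduces to the coset $t_iA_n$; identifying $S_n/A_n$ with $\{\pm1\}$ via the sign, this is $\sgn(t_i)$. In particular $\deg(X_j)$ is the nontrivial element of $\U(\C)$ because $g$ is odd.

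The key step is to note that the support of the fusion rules contradicts the summation range in \cref{eq:Sn-odd-indic}. As $\deg$ is a grading of $\C$, we have $N_{i,l}^j\neq 0\Rightarrow\deg(X_i)\deg(X_l)=\deg(X_j)$, i.e.\ $\sgn(t_it_l)=\sgn(g)=-1$. But \cref{eq:Sn-odd-indic} expresses $\nu_m(X_j)$ as a sum over only those pairs $(i,l)$ with $\sgn(t_it_l)=1$. Hence every term of that sum carries the factor $N_{i,l}^j=0$, and $\nu_m(g,\chi)=\nu_m(X_j)=0$. Equivalently, for $g$ odd the entire BNS sum of \cref{thm:BNS} for $\nu_m(X_j)$ is supported on pairs with $\sgn(t_it_l)=-1$, and that sub-sum vanishes by the second displayed identity of \cref{ex:Sn-bichar}.

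I do not anticipate a genuine obstacle here; the only points needing care are that $\deg(t_i,\phi_i)=(t_iG',\lambda_{\phi_i})$ really collapses to $\sgn(t_i)$ once one uses $\widehat{Z(S_n)}=1$, and that the parity condition cutting down the sum in \cref{eq:Sn-odd-indic} is exactly complementary to the grading condition governing the nonvanishing of $N_{i,l}^j$ precisely when $g$ is an odd permutation.
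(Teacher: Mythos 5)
Your proof is correct and takes essentially the same route as the paper: both read the vanishing off from the restricted sum \cref{eq:Sn-odd-indic} by observing that every fusion coefficient $N_{i,l}^j$ surviving in that sum must vanish when $g$ is odd. The only difference is in how that vanishing is justified — you invoke the universal grading $\U(\C)\cong S_n/A_n$ and the fact that fusion respects degrees, while the paper argues concretely that a constituent $(g,\chi)$ of $(a,\alpha)\ot(b,\beta)$ forces $g$ to be a product of elements conjugate to $a$ and $b$, which is the same parity obstruction in explicit form; your separate treatment of $n\leq 2$ is extra care that the paper leaves implicit.
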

\begin{proof}
  By \cref{ex:Sn-bichar} $\nu_m(g,\chi)$ can be computed using only those irreducibles $X_i,X_l$ such that $t_i,t_l$ have the same sign as permutations.  Furthermore, if $(g,\chi)$ is a submodule of $(a,\alpha)\ot (b,\beta)$ then $g=xy$ for some $x\in\cl(a)$ and $y\in\cl(b)$.  But if $a,b$ have the same sign this implies that $g$ is an even permutation, a contradiction.  Therefore every fusion coefficient appearing in \cref{eq:Sn-odd-indic} is zero, and thus $\nu_m(g,\chi)=0$ as desired.
\end{proof}

\begin{example}\label{ex:bichar-sign}
  Generalizing \cref{ex:Sn-bichar}, let \[R=\cmorph[1][r][0][0]=(0,r,0,0)R_0\neq R_0\] be a quasitriangular structure of $\D(G)$.  Then $b((g,\chi),(g,\chi))=\overline{r(g,g)}$ for all irreducible $\D(G)$-modules $(g,\chi)$.  By writing $X_i=(t_i,\phi_i)$, we therefore have
  \begin{equation*}
       \delta_{1,\mu} \nu_m(X_j)= \frac{1}{|G|^2} \sum_{\substack{i,k\in \Gamma\\ r(t_k,t_k)^m = \mu r(t_i,t_i)^m}} N_{i,k}^j d_i d_k \Big( \frac{T_i}{T_k}\Big)^m
  \end{equation*}
  for all $\mu\in U(1)$.
\end{example}

\begin{example}\label{ex:semidirect}
  Let $G=\cyc{a,b \ | \ a^p=b^{16}=1, ba=a\inv b}=\BZ_p\rtimes\BZ_{16}$.  Then $Z(G)=\cyc{b^2}\cong\BZ_8$ and $G'=\cyc{a}\cong\BZ_p$.  Let $\alpha$ be any generator of $\widehat{G}\cong\BZ_{16}$.  We then have that $\alpha(b)=\mu_{16}$ is a primitive 16-th root of unity.  We next define $r\in\BCh{G}$ by $r(a^i b^j)=\alpha^j$.

  Write
  \begin{gather*}
    X_i = (a^{i_1}b^{i_2},\chi),\\
    X_j = (a^{j_1}b^{j_2},\beta),\\
    X_k = (a^{k_1}b^{k_2},\eta).
  \end{gather*}
  Fix the gauging $b=(0,0,r,0)$ (of $R_0$).  Then we have $b(X_i,X_i) = \mu_{16}^{-i_2^2}$.  From \cref{ex:bichar-sign} we conclude that
  \begin{equation*}
    \sum_{{\mathclap{\substack{i,k\in\Gamma\\ \mu\cdot \mu_{16}^{m(i_2)^2} = \mu_{16}^{m(k_2)^2}}}}} N_{i,k}^j d_i d_k \Big(\frac{T_i}{T_k}\Big)^m = 256p^2\delta_{1,\mu}\nu_m(X_j)
  \end{equation*}
  for all $\mu\in U(1)$.  For $\mu=1$ the sum is over those labels $i,k\in\Gamma$ with \[i_2^2\equiv k_2^2 \bmod \frac{16}{\gcd(16,m)}.\]
\end{example}
\begin{example}\label{ex:p-sign}
  Let $R=\cmorph[1][0][p][0]=(0,0,p,0)R_0\neq R_0$ be a quasitriangular structure of $\D(G)$.  We then have $b((g,\chi),(g,\chi))= \lambda_\chi(p^*(\lambda_\chi)\inv)$ for all simple objects $(g,\chi)$.  Therefore
  \begin{equation*}
       \sum_{{\mathclap{\substack{i,k\in \Gamma\\ \lambda_{\phi_k}(p^*(\lambda_{\phi_k}))^m = \mu \lambda_{\phi_i}(p^*(\lambda_{\phi_i}))^m}}}}
        N_{i,k}^j d_i d_k \Big( \frac{T_i}{T_k}\Big)^m =\delta_{1,\mu} |G|^2 \nu_m(X_j).
  \end{equation*}
\end{example}
\begin{example}\label{ex:z-sign}
    Let $R=\cmorph[1][0][0][z]=(0,0,0,z)R_0\neq R_0$ be a quasitriangular structure of $\D(G)$.  Then $b((g,\chi),(g,\chi)) = \lambda_\chi(z(g))$.  Therefore
    \begin{equation*}
       \sum_{{\mathclap{\substack{i,k\in \Gamma\\ \lambda_{\phi_i}(z(t_i^m)) = \mu \lambda_{\phi_k}(z(t_k^m))}}}}
        N_{i,k}^j d_i d_k \Big( \frac{T_i}{T_k}\Big)^m = \delta_{1,\mu}|G|^2 \nu_m(X_j).
  \end{equation*}
\end{example}

\section{Application to the Verlinde formula}\label{sec:verlinde}
We now give an application similar to those in the preceding section, but using the $S$-matrix and the Verlinde formula for the fusion rules~\citep{BakKir:book}. We consider only purely non-abelian groups when discussing $\Rep(\D(G))$ in this section, so that every braid gauging preserves the modularity property.

As before, let $\C$ be a modular fusion category with braiding $c$, basis $\B=\{X_1,...,X_m\}$, and label set $\Gamma=\{1,...,m\}$.  Let $0$ denote both the (isomorphism class of) the identity object as well as its label in $\Gamma$. For $j\in\Gamma$ we define $j^*\in \Gamma$ by $X_j^*\cong X_{j^*}$. Then the Verlinde formula is
\begin{equation}\label{eq:verlinde}
N_{i,k}^{j^*} = \frac{1}{\dim(\C)} \sum_{a\in\Gamma} \frac{ S_{i,a} S_{k,a} S_{j,a}}{S_{0,a}}.
\end{equation}
Note that $0^*=0\in\Gamma$ under these definitions.

As the fusion rules are invariant under braid gauging, we can take linear combinations of \cref{eq:verlinde} over various gaugings that also yield a modular category to obtain new identities and dependencies.

\begin{thm}\label{thm:fusion-gauge}
  Let $\C$ be a modular fusion category, and let $b$ be a braid gauging such $\cyc{b}$ is modular.  Then for all $X,Y,Z\in\B$ and $\mu\in U(1)$ we have
  \begin{equation*}
    \sum_{{\mathclap{\substack{A\in\B\\b(X\ot Y\ot Z,A)=\mu b(A^*,X\ot Y\ot Z)}}}} \frac{ S_{X,A} S_{Y,A} S_{Z,A}}{S_{0,A}} = \dim(\C)\delta_{1,\mu} N_{X,Y}^{Z^*}
  \end{equation*}
\end{thm}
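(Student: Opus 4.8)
The plan is to follow the template of the proof of \cref{thm:indicator-gauge}, with the Bantay--Ng--Schauenburg formula replaced by the Verlinde formula \cref{eq:verlinde} and the $T$-matrix transformation replaced by the $S$-matrix transformation of \cref{thm:mod-gauge}. Since the braid gaugings form a group and $\cyc{b}$ is modular, each category $(\C,c^{b^s})$ with $s\in\BZ$ is modular, so \cref{eq:verlinde} applies to it. By \cref{thm:mod-gauge} applied to the gauging $b^s$ we have $S^{b^s}_{X,A}=b^s(X,A)\,b^s(A,X)\,S_{X,A}$; and because the unit object $0$ lies in the trivial homogeneous component, $b(0,A)=b(A,0)=1$, so $S^{b^s}_{0,A}=S_{0,A}$. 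Substituting into \cref{eq:verlinde} for $(\C,c^{b^s})$ and using the multiplicativity of $b$ in tensor products, the Verlinde formula for the gauged category reads
\[
N_{X,Y}^{Z^*}=\frac{1}{\dim(\C)}\sum_{A\in\B} b^s(X\ot Y\ot Z,A)\,b^s(A,X\ot Y\ot Z)\,\frac{S_{X,A}S_{Y,A}S_{Z,A}}{S_{0,A}}
\]
for every $s$.

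Next I would carry out the bookkeeping that turns the scalar coefficient into a power. Since $\deg(A^*)=\deg(A)\inv$, bilinearity of $b$ gives $b(A,W)=b(A^*,W)\inv$ for every homogeneous object $W$; taking $W=X\ot Y\ot Z$ rewrites the coefficient above as $\omega_A^s$, where
\[
\omega_A=\frac{b(X\ot Y\ot Z,A)}{b(A^*,X\ot Y\ot Z)}.
\]
This $\omega_A$ is a root of unity of order dividing $\exp(\Img(b))$, and the summation condition appearing in the statement, namely $b(X\ot Y\ot Z,A)=\mu\,b(A^*,X\ot Y\ot Z)$, is precisely the condition $\omega_A=\mu$.

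The remainder is the discrete Fourier extraction used in \cref{thm:indicator-gauge}. Let $\varpi$ be a primitive $\exp(\Img(b))$-th root of unity and write $\omega_A=\varpi^{t_A}$ with $t_A\in\{0,\dots,\exp(\Img(b))-1\}$, so that
\[
N_{X,Y}^{Z^*}=\frac{1}{\dim(\C)}\sum_{t=0}^{\exp(\Img(b))-1}\varpi^{ts}\sum_{\substack{A\in\B\\ \omega_A=\varpi^{t}}}\frac{S_{X,A}S_{Y,A}S_{Z,A}}{S_{0,A}}.
\]
Summing over $0\le s<\exp(\Img(b))$ and using $\sum_s\varpi^{ts}=\exp(\Img(b))\,\delta_{t,0}$ yields the case $\mu=1$; forming instead $\sum_s\varpi^{-js}$ times the displayed identity isolates the inner sum over $\{A:\omega_A=\varpi^{j}\}$ and yields the case $\mu=\varpi^{j}$, using that $\delta_{j,0}=\delta_{1,\varpi^{j}}$ in this range. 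For any $\mu$ that is not a power of $\varpi$ the index set $\{A:\omega_A=\mu\}$ is empty and $\delta_{1,\mu}=0$, so the identity holds trivially. Combining the cases proves the theorem.

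I do not anticipate a serious obstacle. The one point requiring care is the asymmetry between $b(A,-)$ and $b(A^*,-)$ together with the vanishing of the gauging factors in the $S_{0,A}$ denominator: it is exactly this that makes the surviving scalar the ratio $b(X\ot Y\ot Z,A)/b(A^*,X\ot Y\ot Z)$ rather than a product, so one must track the inverse coming from $\deg(A^*)=\deg(A)\inv$ correctly. Everything else is the same averaging argument as in \cref{thm:indicator-gauge}.
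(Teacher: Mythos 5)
Your proposal is correct and follows essentially the same route as the paper's proof: apply the Verlinde formula to each gauging $b^s$, use \cref{thm:mod-gauge} (with the unit-object factors trivial) to get the coefficient $\bigl(b(X\ot Y\ot Z,A)\,b(A,X\ot Y\ot Z)\bigr)^s$, rewrite it as $\omega_A^s$ via $\deg(A^*)=\deg(A)\inv$, and extract each root-of-unity class by averaging against powers of a primitive $\exp(\Img(b))$-th root of unity, with the non-power-of-$\varpi$ case holding vacuously. No gaps.
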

\begin{proof}
  Let notation and assumptions be as in the statement.  Then by applying \cref{thm:mod-gauge} to the Verlinde formula obtained from gauging by $b^s$ for some $s\in\BZ$, and using the properties of $b$ and the (hidden) degree function $\deg$, we have
  \begin{equation}\label{eq:verlinde-1}
    N_{X,Y}^{Z^*} = \frac{1}{\dim(\C)} \sum_{A\in\B} \left(b(X\ot Y\ot Z,A) b(A,X\ot Y\ot Z)\right)^s \frac{ S_{X,A} S_{Y,A} S_{Z,A}}{S_{0,A}}.
  \end{equation}
  As in the proof of \cref{thm:indicator-gauge}, we let $\varpi$ be a primitive $\exp(\Img(b))$-th root of unity.  Then we may write \cref{eq:verlinde-1} as
  \begin{equation}\label{eq:verlinde-2}
    N_{X,Y}^{Z^*} = \frac{1}{\dim(\C)} \sum_{t=0}^{\exp(\Img(b))-1} \sum_{{\mathclap{\substack{A\in\B\\ b(X\ot Y\ot Z,A)=\varpi^{t} b(A^*,X\ot Y\ot Z)}}}} (\varpi)^{st} \frac{ S_{X,A} S_{Y,A} S_{Z,A}}{S_{0,A}}.
  \end{equation}
  Summing over $s$ gives the desired equation in the special case $\mu=1$.  Taking linear combinations gives the desired equation when $\mu$ is a power of $\varpi$.  For all other $\mu$, there are no $A\in\B$ with $b(X\ot Y\ot Z,A)=\mu b(A^*,X\ot Y\ot Z)$, and so the desired identity is trivially true.

  This completes the proof.
\end{proof}

We conclude with a few examples using $\Rep(\D(G))$.
\begin{example}\label{ex:bichar-S}
  Let $R=\cmorph[1][r][0][0]=(0,r,0,0)R_0$.  Taking $X=(g,\chi), Y=(h,\eta), Z=(l,\kappa)$ then we have
  \[ \sum_{{\mathclap{\substack{A=(t,\phi)\in\B\\r(t,ghl)r(ghl,t)=1}}}} \frac{ S_{X,A} S_{Y,A} S_{Z,A}}{S_{0,A}} = |G|^2 N_{X,Y}^{Z^*},\]
  and for $\mu\neq 1$
  \[ \sum_{{\mathclap{\substack{(t,\phi)\in\B\\r(t,ghl)r(ghl,t)=\mu}}}} \frac{ S_{X,A} S_{Y,A} S_{Z,A}}{S_{0,A}}=0,\]
\end{example}
\begin{example}
  Continuing the previous example, when $G=S_k$ and $r$ is the unique non-trivial bicharacter, then $r(t,ghl)r(ghl,t)=1$ holds for all $a\in\Gamma$ and $g,h,l\in G$. So in this case no terms are eliminated from the Verlinde formula for any choice of simples $X,Y,Z$.
\end{example}
\begin{example}
  On the other hand, consider $G=\BZ_p\rtimes\BZ_{16}$, with notation and $r$ as given in \cref{ex:semidirect}. Write
  \begin{gather*}
    X_i = (a^{i_1}b^{i_2},\chi)\\
    X_j = (a^{j_1}b^{j_2},\beta)\\
    X_k = (a^{k_1}b^{k_2},\eta)\\
    X_s = (a^{s_1}b^{s_2},\alpha),
  \end{gather*}
  where to avoid conflict with the element $a\in G$ we have used the label $s\in\Gamma$ instead of $a\in\Gamma$.  Then by \cref{ex:bichar-S} we have that
  \[ \sum_{{\mathclap{\substack{s\in\Gamma\\s_2(i_2+j_2+k_2)\equiv 0\bmod 8}}}}\frac{ S_{i,s} S_{k,s} S_{j,s}}{S_{0,s}} = 256p^2 N_{i,k}^{j^*}.\]
  So, as a special case, if $i_2+j_2+k_2$ is odd then it follows that the sum is run over those $X_s$ with $s_2\equiv 0\bmod 8$.  At the other extreme, the sum is precisely the usual Verlinde formula if and only if $i_2+j_2+k_2\equiv 0\bmod 8$.
\end{example}

Lastly, we note that while the examples of these last two sections have focused on quasitriangular structures of the form $\cmorph[1][r][p][z]$, they can in fact be expressed for more general quasitriangular structures (including ones that are not left-handed) in the obvious fashion.  The particular case of replacing $R$ with the inverse braiding $\tau(R\inv)$ simply conjugates the BNS and Verlinde formulas, allowing us to get formulas using real and imaginary parts as in \cref{ex:inverse-form}.



\bibliographystyle{plainnat}
\bibliography{../references,../refs-quasitriangular}

\def\cprime{$'$} \def\cprime{$'$}
\begin{thebibliography}{25}
\providecommand{\natexlab}[1]{#1}
\providecommand{\url}[1]{\texttt{#1}}
\expandafter\ifx\csname urlstyle\endcsname\relax
  \providecommand{\doi}[1]{doi: #1}\else
  \providecommand{\doi}{doi: \begingroup \urlstyle{rm}\Url}\fi

\bibitem[Adney and Yen(1965)]{AY65}
J.~E. Adney and Ti~Yen.
\newblock Automorphisms of a {$p$}-group.
\newblock \emph{Illinois J. Math.}, 9:\penalty0 137--143, 1965.
\newblock ISSN 0019-2082.
\newblock URL \url{http://projecteuclid.org/euclid.ijm/1256067587}.

\bibitem[Bakalov and Kirillov(2001)]{BakKir:book}
B.~Bakalov and A.A. Kirillov.
\newblock \emph{Lectures on Tensor Categories and Modular Functors}.
\newblock Translations of Mathematical Monographs. American Mathematical Soc.,
  2001.
\newblock ISBN 9780821882795.
\newblock URL \url{https://books.google.com/books?id=7nEtW55Tg-8C}.

\bibitem[Bidwell et~al.(2006)Bidwell, Curran, and McCaughan]{BCM}
J.~N.~S. Bidwell, M.~J. Curran, and D.~J. McCaughan.
\newblock Automorphisms of direct products of finite groups.
\newblock \emph{Arch. Math. (Basel)}, 86\penalty0 (6):\penalty0 481--489, 2006.
\newblock ISSN 0003-889X.
\newblock \doi{10.1007/s00013-005-1547-z}.
\newblock URL \url{http://dx.doi.org/10.1007/s00013-005-1547-z}.

\bibitem[Bonderson et~al.(2019)Bonderson, Delaney, Galindo, Rowell, Tran, and
  Wang]{BDGRTW:BeyondModular}
Parsa Bonderson, Colleen Delaney, César Galindo, Eric~C. Rowell, Alan Tran,
  and Zhenghan Wang.
\newblock On invariants of modular categories beyond modular data.
\newblock \emph{Journal of Pure and Applied Algebra}, 223\penalty0
  (9):\penalty0 4065 -- 4088, 2019.
\newblock ISSN 0022-4049.
\newblock \doi{https://doi.org/10.1016/j.jpaa.2018.12.017}.
\newblock URL
  \url{http://www.sciencedirect.com/science/article/pii/S0022404918303098}.

\bibitem[Chirvasitu and Kasprzak(2016)]{ChirKas:HopfCenter}
Alexandru Chirvasitu and Paweł Kasprzak.
\newblock On the hopf (co)center of a hopf algebra.
\newblock \emph{Journal of Algebra}, 464:\penalty0 141 -- 174, 2016.
\newblock ISSN 0021-8693.
\newblock \doi{https://doi.org/10.1016/j.jalgebra.2016.07.002}.
\newblock URL
  \url{http://www.sciencedirect.com/science/article/pii/S0021869316301685}.

\bibitem[Courter(2012)]{Co}
Rebecca Courter.
\newblock \emph{Computing {H}igher {I}ndicators for the {D}ouble of a
  {S}ymmetric {G}roup}.
\newblock PhD thesis, University of Southern California, 2012.
\newblock \href{http://arxiv.org/abs/1206.6908}{\tt arXiv:1206.6908}.

\bibitem[Dijkgraaf et~al.(1990)Dijkgraaf, Pasquier, and Roche]{DPR}
R.~Dijkgraaf, V.~Pasquier, and P.~Roche.
\newblock Quasi {H}opf algebras, group cohomology and orbifold models.
\newblock \emph{Nuclear Phys. B Proc. Suppl.}, 18B:\penalty0 60--72 (1991),
  1990.
\newblock ISSN 0920-5632.
\newblock \doi{10.1016/0920-5632(91)90123-V}.
\newblock URL \url{http://dx.doi.org/10.1016/0920-5632(91)90123-V}.
\newblock Recent advances in field theory (Annecy-le-Vieux, 1990).

\bibitem[Etingof et~al.(2015)Etingof, Gelaki, Nikshych, and Ostrik]{EGNO}
Pavel~I. Etingof, Shlomo Gelaki, Dmitri Nikshych, and Victor Ostrik.
\newblock \emph{Tensor categories}, volume 205 of \emph{Mathematical {S}urveys
  and {M}onographs}.
\newblock American Mathematical Society, 2015.
\newblock URL \url{http://www-math.mit.edu/~etingof/egnobookfinal.pdf}.

\bibitem[Gelaki and Nikshych(2008)]{GN:Nilpotent}
Shlomo Gelaki and Dmitri Nikshych.
\newblock Nilpotent fusion categories.
\newblock \emph{Advances in Mathematics}, 217\penalty0 (3):\penalty0 1053 --
  1071, 2008.
\newblock ISSN 0001-8708.
\newblock \doi{https://doi.org/10.1016/j.aim.2007.08.001}.
\newblock URL
  \url{http://www.sciencedirect.com/science/article/pii/S0001870807002290}.

\bibitem[Jiao(2009)]{Jiao:QTSwSP}
Zhengming Jiao.
\newblock The quasitriangular structures for {$\omega$}-smash coproduct {H}opf
  algebras.
\newblock \emph{J. Algebra Appl.}, 8\penalty0 (5):\penalty0 673--687, 2009.
\newblock ISSN 0219-4988.
\newblock \doi{10.1142/S0219498809003606}.
\newblock URL \url{http://dx.doi.org/10.1142/S0219498809003606}.

\bibitem[Kashina et~al.(2006)Kashina, Sommerh{\"a}user, and Zhu]{KSZ2}
Yevgenia Kashina, Yorck Sommerh{\"a}user, and Yongchang Zhu.
\newblock On higher {F}robenius-{S}chur indicators.
\newblock \emph{Mem. Amer. Math. Soc.}, 181\penalty0 (855):\penalty0 viii+65,
  2006.
\newblock ISSN 0065-9266.
\newblock \doi{10.1090/memo/0855}.
\newblock URL \url{http://dx.doi.org/10.1090/memo/0855}.

\bibitem[{Keilberg}(2016)]{K17:Twisted}
M.~{Keilberg}.
\newblock {Homomorphisms and rigid isomorphisms of twisted group doubles}.
\newblock \emph{ArXiv e-prints}, May 2016.

\bibitem[Keilberg(2015)]{K14}
Marc Keilberg.
\newblock Automorphisms of the doubles of purely non-abelian finite groups.
\newblock \emph{Algebras and Representation Theory}, 18\penalty0 (5):\penalty0
  1267--1297, 2015.
\newblock ISSN 1386-923X.
\newblock \doi{10.1007/s10468-015-9540-0}.
\newblock URL \url{http://dx.doi.org/10.1007/s10468-015-9540-0}.
\newblock Corrected in \citep{K:Aut-Errata}.

\bibitem[Keilberg(2018)]{K3:QTS}
Marc Keilberg.
\newblock Quasitriangular structures of the double of a finite group.
\newblock \emph{Communications in Algebra}, 0\penalty0 (0):\penalty0 1--34,
  2018.
\newblock \doi{10.1080/00927872.2018.1461883}.
\newblock URL \url{https://doi.org/10.1080/00927872.2018.1461883}.

\bibitem[Keilberg(2019{\natexlab{a}})]{K:Aut-Errata}
Marc Keilberg.
\newblock Correction to: {A}utomorphisms of the {D}oubles of {P}urely
  {N}on-{A}belian {F}inite {G}roups.
\newblock \emph{Algebras and Representation Theory}, Jan 2019{\natexlab{a}}.
\newblock ISSN 1572-9079.
\newblock \doi{10.1007/s10468-019-09856-w}.
\newblock URL \url{https://doi.org/10.1007/s10468-019-09856-w}.

\bibitem[Keilberg(2019{\natexlab{b}})]{K:Twisted}
Marc Keilberg.
\newblock Homomorphisms and rigid isomorphisms of twisted group doubles.
\newblock \emph{Algebras and Representation Theory}, Apr 2019{\natexlab{b}}.
\newblock ISSN 1572-9079.
\newblock \doi{10.1007/s10468-019-09871-x}.
\newblock URL \url{https://doi.org/10.1007/s10468-019-09871-x}.

\bibitem[{Keilberg} and {Schauenburg}(2016)]{KS14}
Marc {Keilberg} and Peter {Schauenburg}.
\newblock {On tensor factorizations of Hopf algebras}.
\newblock \emph{Algebra and Number Theory}, 10\penalty0 (1):\penalty0 61--87,
  2016.
\newblock \doi{10.2140/ant.2016.10.61}.
\newblock URL \url{http://dx.doi.org/10.2140/ant.2016.10.61}.

\bibitem[{Kulkarni} et~al.(2018){Kulkarni}, {Mignard}, and
  {Schauenburg}]{KMS:invariant}
Ajinkya {Kulkarni}, Micha{\"e}l {Mignard}, and Peter {Schauenburg}.
\newblock {A Topological Invariant for Modular Fusion Categories}.
\newblock \emph{arXiv e-prints}, art. arXiv:1806.03158, Jun 2018.

\bibitem[{Mignard} and {Schauenburg}(2017)]{PM}
M.~{Mignard} and P.~{Schauenburg}.
\newblock {Modular categories are not determined by their modular data}.
\newblock \emph{ArXiv e-prints}, August 2017.

\bibitem[Montgomery(1993)]{Mon:HAAR}
Susan Montgomery.
\newblock \emph{Hopf algebras and their actions on rings}, volume~82 of
  \emph{CBMS Regional Conference Series in Mathematics}.
\newblock Published for the Conference Board of the Mathematical Sciences,
  Washington, DC; by the American Mathematical Society, Providence, RI, 1993.
\newblock ISBN 0-8218-0738-2.

\bibitem[M\"{u}eger(2013)]{mueger:modular}
Michael M\"{u}eger.
\newblock Modular categories.
\newblock In E.~Grefenstette C.~Heunen, M.~Sadrzadeh, editor,
  \emph{Compositional methods in quantum physics and linguistics}. Oxford
  University Press, 2013.
\newblock URL
  \url{https://global.oup.com/academic/product/quantum-physics-and-linguistics-9780199646296}.

\bibitem[Ng and Schauenburg(2007)]{NS07a}
Siu-Hung Ng and Peter Schauenburg.
\newblock Frobenius-{S}chur indicators and exponents of spherical categories.
\newblock \emph{Adv. Math.}, 211\penalty0 (1):\penalty0 34--71, 2007.
\newblock ISSN 0001-8708.
\newblock \doi{10.1016/j.aim.2006.07.017}.
\newblock URL \url{http://dx.doi.org/10.1016/j.aim.2006.07.017}.

\bibitem[{Nikshych}(2018)]{Nik:braidings}
D.~{Nikshych}.
\newblock {Classifying braidings on fusion categories}.
\newblock \emph{ArXiv e-prints}, January 2018.

\bibitem[Radford(1991)]{Rad:QSHA}
David~E. Radford.
\newblock On the quasitriangular structures of a semisimple {H}opf algebra.
\newblock \emph{J. Algebra}, 141\penalty0 (2):\penalty0 354--358, 1991.
\newblock ISSN 0021-8693.
\newblock \doi{10.1016/0021-8693(91)90236-2}.
\newblock URL \url{http://dx.doi.org/10.1016/0021-8693(91)90236-2}.

\bibitem[Takeuchi(2001)]{Takeuchi:ModHopf}
Mitsuhiro Takeuchi.
\newblock Modular {C}ategories and {H}opf {A}lgebras.
\newblock \emph{Journal of Algebra}, 243\penalty0 (2):\penalty0 631 -- 643,
  2001.
\newblock ISSN 0021-8693.
\newblock \doi{http://dx.doi.org/10.1006/jabr.2001.8856}.
\newblock URL
  \url{http://www.sciencedirect.com/science/article/pii/S0021869301988568}.

\end{thebibliography}
\end{document}